\pdfoutput=1 
\documentclass{article}[12pt]

\usepackage{amssymb,amsmath,amsthm,amsfonts,mathrsfs}
\usepackage[hmargin=3cm,vmargin=3.5cm]{geometry}
\usepackage{graphicx} 
\usepackage{hyperref}
\usepackage{amsfonts}
\usepackage{amssymb}
\usepackage{amsmath}
\usepackage{amsthm}
\usepackage{mathtools}
\usepackage{cleveref}
\usepackage{tikz}
\usepackage{tikz-cd}
\usepackage{subcaption}
\usepackage{todonotes} 
\usepackage{enumitem}

\usepackage[
    backend=biber,
    style=alphabetic,
    sorting=nyt,
    doi=false,
    url=false,
    isbn=false,
    maxbibnames=10,
]{biblatex}


\usepackage{fancyvrb}
\VerbatimFootnotes

\theoremstyle{plain}
    \newtheorem{thm}{Theorem}[section]
    \newtheorem{prop}[thm]{Proposition}
    \newtheorem{cor}[thm]{Corollary}
    \newtheorem{lem}[thm]{Lemma}

\theoremstyle{definition}
    
    \newtheorem{ex}[thm]{Example}

\theoremstyle{remark}
	\newtheorem{remark}[thm]{Remark}%
	\newtheorem{question}[thm]{Question}%

\crefname{prop}{Proposition}{Propositions}

\def\R{\mathbb R}
\def\Q{\mathbb Q}
\def\Z{\mathbb Z}

\def\F{\mathbb F}

\newcommand{\mcF}{{\mathcal F}}

\newcommand{\CKh}{\mathit{CKh}}
\newcommand{\Kh}{\mathit{Kh}}
\newcommand{\rCKh}{\widetilde{\CKh}}
\newcommand{\rKh}{\widetilde{\Kh}}

\newcommand{\hsigma}{\widehat{\sigma}}
\newcommand{\hnu}{\widehat{\nu}}
\renewcommand{\sl}{\mathfrak{sl}}

\DeclareMathOperator{\Hom}{Hom}

\DeclareMathOperator{\Tor}{Tor}

\DeclareMathOperator{\Mod}{-Mod}

\DeclareMathOperator{\fchar}{char}

\DeclareMathOperator{\gr}{gr}
\newcommand{\id}{\mathsf{id}}

\DeclareMathOperator{\Cob}{Cob}

\newcommand{\ca}{\alpha}
\newcommand{\cb}{\beta}
\newcommand{\cx}{\gamma^+}
\newcommand{\cy}{\gamma^-}
\newcommand{\cz}{\zeta}

\renewcommand{\epsilon}{\varepsilon}
\renewcommand{\emptyset}{\varnothing}
\newcommand{\bdot}{\bullet}
\newcommand{\hdot}{\circ}

\newcommand{\Xbar}{\overline{X}}
\newcommand{\Ybar}{\overline{Y}}

\newcommand{\dual}{\mathsf{D}}

\title{Symmetries of equivariant Khovanov homology}
\author{Mikhail Khovanov, Taketo Sano}

\addbibresource{bibliography.bib}

\begin{document}

\maketitle

\begin{abstract}%
    We study symmetries in equivariant versions of Khovanov homology, which include (i) the construction of an involution $\widehat{\sigma}$ for the $U(2)$-equivariant theory, (ii) an integral lifting $\widehat{\nu}$ of the Shumakovitch operation $\nu$, and (iii) splitting of the $U(1)$- and $U(1)\times U(1)$-equivariant theories generalizing earlier work over $\mathbb{F}_2$. 
Finally, we relate these structures to the Rasmussen $s$-invariant over an arbitrary field $F$.
\end{abstract}

\tableofcontents

\section*{Introduction}
\addcontentsline{toc}{section}{\protect Introduction}%

Deformations and modifications of Khovanov homology~\cite{Khovanov:2000} by E.S.Lee~\cite{Lee:2005} and D.Bar-Natan \cite{BarNatan:2005} can be rethought in the framework of equivariant versions of Khovanov homology. 
The universal theory of that kind is the \textit{$U(2)$-equivariant theory}, originally introduced by Bar-Natan~\cite{BarNatan:2005} via a skein theoretic construction, and then reformulated in the context of \textit{Frobenius extensions} in \cite{Khovanov:2004}. The specific Frobenius extension is given by the ground ring $R = \Z[h, t]$ and the Frobenius algebra $A = R[X]/(X^2 - hX - t)$, from which some of the previously known theories are recovered by the following specializations:
\begin{itemize}
    \item the original construction in \cite{Khovanov:2000} by $(h, t) = (0, 0)$,
    \item Lee's deformation \cite{Lee:2005} by $(h, t) = (0, 1)$ over $R = \Q$,
    \item Bar-Natan's deformation in characteristic 2~\cite{BarNatan:2005} by $(h, t) = (H, 0)$ over $R = \F_2[H]$.
\end{itemize}

Relations among various equivariant theories are summarized in \cite{Khovanov:2022}. 
Furthermore, when considered over the field $\F_2$ of two elements, these theories exhibit additional symmetries:

\begin{enumerate}[label=(\roman*)]
    \item the $U(2)$-equivariant theory over $\F_2$ admits an involution $\sigma$ induced from a Frobenius algebra involution $\sigma \colon X \mapsto X + h$,
    \item $\F_2$-Khovanov homology admits the \textit{Shumakovitch operation} $\nu$ \cite{Shumakovitch:2014}, which is an acyclic differential on the homology group, i.e.\ $\nu^2 = 0$ and the complex with respect to the differential $\nu$ is acyclic, and
    \item $\F_2$-Khovanov homology and $\F_2$-Bar-Natan homology each split into two copies of the respective reduced theory \cite{Shumakovitch:2014,Wigderson:2016}. 
\end{enumerate}

In the first two sections of this paper, we show that the symmetries described above extend to various equivariant theories when the ground ring contains $\Z$. Specifically,

\begin{enumerate}[label=(\roman*)]
    \item the $U(2)$-equivariant Khovanov homology admits an integral lift $\hsigma$ of the involution $\sigma$ (\Cref{sec:involution}),
    \item the $U(2)$-equivariant Khovanov homology admits an integral lift $\hnu$ of the Shumakovitch operation $\nu$, which is again an acyclic differential on the homology group (\Cref{subsec:hnu}), and
    \item the $U(1)$- and $U(1)\times U(1)$- equivariant Khovanov homologies each split into two copies of the respective reduced homology (\Cref{subsec:reduced,subsec:U1xU1-equiv}).
\end{enumerate}

The involution $\hsigma$ and the operation $\hnu$ are not endomorphisms over the ground ring $R = \Z[h, t]$ but rather over its subring $\Z[h^2, t]$. Consequently, the splitting results hold over suitable subrings of the corresponding ground rings.

Finally, in \Cref{sec:rasmussen}, we connect these structures to the Rasmussen $s$-invariant \cite{Rasmussen:2010}, considered over an arbitrary field $F$.

\subsection*{Preliminaries}

We assume that the reader is familiar with the construction of Khovanov homology and its equivariant versions \cite{Khovanov:2000,Lee:2005,BarNatan:2005,Khovanov:2004,Khovanov:2022}. Here, we briefly review the setting of the \textit{$U(2)$-equivariant Khovanov homology}, originally defined in~\cite{BarNatan:2005}.   

Let $R_{h, t}$ denote the graded ring\footnote{
    The grading defined here is opposite from \cite{Khovanov:2000} and the same as \cite{Khovanov:2004}.
} 
$\Z[h, t]$ with $\deg h = 2, \deg t = 4$, and $A_{h, t}$ the graded Frobenius $R_{h, t}$-algebra $R_{h, t}[X]/(X^2 - hX - t)$ with $\deg X = 2$, equipped with the algebra structure (multiplication $m$ and unit $\iota$) inherited from $R_{h, t}[X]$ and the coalgebra structure (comultiplication $\Delta$ and counit $\epsilon$) determined by the counit
\[
    \epsilon(1) = 0,\quad
    \epsilon(X) = 1.
\]
The comultiplication $\Delta$ is given by
\[
    \Delta(1) = 1 \otimes X + X \otimes 1 - h(1 \otimes 1),\quad 
    \Delta(X) = X \otimes X +  t(1 \otimes 1).
\]
For any oriented link diagram $D$, let $\CKh_{h, t}(D)$ denote the Khovanov complex of $D$ obtained from the Frobenius algebra $A_{h, t}$, and $\Kh_{h, t}(D)$ its homology. 

In Bar-Natan's reformulation and generalization of Khovanov homology via \textit{dotted cobordisms} \cite{BarNatan:2005}, the local relations for the $U(2)$-equivariant theory are given by:
\begin{center}
    \input{tikzpictures/loc-relation-dot}
\end{center}
Let $\Cob_{\bdot / l}(B)$ denote the category of dotted cobordisms modulo local relations, defined for each finite 
subset $B \subset \partial D^2$ of boundary points of planar tangles. Here we only consider $B = \emptyset$ and write $\Cob_{\bdot / l} := \Cob_{\bdot / l}(\emptyset)$.
The TQFT $\mcF_{h, t}$ obtained from the Frobenius algebra $A_{h, t}$ is recovered by the tautological functor
\[
    \mcF_{h, t} = \Hom_{\Cob_{\bdot / l}}(\emptyset, -)\colon
    \Cob_{\bdot / l} \to R_{h, t}\Mod,
\]
where $R_{h, t}\Mod$ stands for the category of graded $R_{h,t}$-modules.
In particular, the base ring $R_{h, t}$ is given by evaluations of closed dotted surfaces
\[
    \Hom_{\Cob_{\bdot / l}}(\emptyset, \emptyset) \cong R_{h, t}
\]
where elements $h, t \in R_{h, t}$ correspond to:
\begin{center}
    \tikzset{every picture/.style={line width=0.75pt}} 

\begin{tikzpicture}[x=0.75pt,y=0.75pt,yscale=-1,xscale=1]

\draw  [draw opacity=0] (89.94,40.75) .. controls (89.29,43) and (82.45,44.76) .. (74.11,44.76) .. controls (65.35,44.76) and (58.24,42.81) .. (58.24,40.41) .. controls (58.24,40.26) and (58.27,40.11) .. (58.32,39.97) -- (74.11,40.41) -- cycle ; \draw  [color={rgb, 255:red, 128; green, 128; blue, 128 }  ,draw opacity=1 ] (89.94,40.75) .. controls (89.29,43) and (82.45,44.76) .. (74.11,44.76) .. controls (65.35,44.76) and (58.24,42.81) .. (58.24,40.41) .. controls (58.24,40.26) and (58.27,40.11) .. (58.32,39.97) ;  
\draw  [draw opacity=0][dash pattern={on 0.84pt off 2.51pt}] (58.63,39.33) .. controls (59.9,37.27) and (66.43,35.7) .. (74.28,35.7) .. controls (82.93,35.7) and (89.96,37.6) .. (90.15,39.96) -- (74.28,40.06) -- cycle ; \draw  [color={rgb, 255:red, 128; green, 128; blue, 128 }  ,draw opacity=1 ][dash pattern={on 0.84pt off 2.51pt}] (58.63,39.33) .. controls (59.9,37.27) and (66.43,35.7) .. (74.28,35.7) .. controls (82.93,35.7) and (89.96,37.6) .. (90.15,39.96) ;  
\draw   (58.12,40.41) .. controls (58.12,31.57) and (65.28,24.41) .. (74.11,24.41) .. controls (82.95,24.41) and (90.11,31.57) .. (90.11,40.41) .. controls (90.11,49.24) and (82.95,56.4) .. (74.11,56.4) .. controls (65.28,56.4) and (58.12,49.24) .. (58.12,40.41) -- cycle ;
\draw  [fill={rgb, 255:red, 0; green, 0; blue, 0 }  ,fill opacity=1 ] (67.63,31.68) .. controls (67.63,30.55) and (68.55,29.63) .. (69.68,29.63) .. controls (70.82,29.63) and (71.73,30.55) .. (71.73,31.68) .. controls (71.73,32.81) and (70.82,33.73) .. (69.68,33.73) .. controls (68.55,33.73) and (67.63,32.81) .. (67.63,31.68) -- cycle ;
\draw  [fill={rgb, 255:red, 0; green, 0; blue, 0 }  ,fill opacity=1 ] (77.23,31.68) .. controls (77.23,30.55) and (78.15,29.63) .. (79.28,29.63) .. controls (80.42,29.63) and (81.33,30.55) .. (81.33,31.68) .. controls (81.33,32.81) and (80.42,33.73) .. (79.28,33.73) .. controls (78.15,33.73) and (77.23,32.81) .. (77.23,31.68) -- cycle ;
\draw  [draw opacity=0] (249.56,23.31) .. controls (249,25.25) and (243.11,26.77) .. (235.92,26.77) .. controls (228.36,26.77) and (222.23,25.09) .. (222.23,23.01) .. controls (222.23,22.88) and (222.26,22.76) .. (222.3,22.64) -- (235.92,23.01) -- cycle ; \draw  [color={rgb, 255:red, 128; green, 128; blue, 128 }  ,draw opacity=1 ] (249.56,23.31) .. controls (249,25.25) and (243.11,26.77) .. (235.92,26.77) .. controls (228.36,26.77) and (222.23,25.09) .. (222.23,23.01) .. controls (222.23,22.88) and (222.26,22.76) .. (222.3,22.64) ;  
\draw  [draw opacity=0][dash pattern={on 0.84pt off 2.51pt}] (222.57,22.08) .. controls (223.67,20.31) and (229.29,18.95) .. (236.06,18.95) .. controls (243.51,18.95) and (249.57,20.59) .. (249.75,22.62) -- (236.06,22.71) -- cycle ; \draw  [color={rgb, 255:red, 128; green, 128; blue, 128 }  ,draw opacity=1 ][dash pattern={on 0.84pt off 2.51pt}] (222.57,22.08) .. controls (223.67,20.31) and (229.29,18.95) .. (236.06,18.95) .. controls (243.51,18.95) and (249.57,20.59) .. (249.75,22.62) ;  
\draw   (222.13,23.01) .. controls (222.13,15.4) and (228.3,9.22) .. (235.92,9.22) .. controls (243.53,9.22) and (249.71,15.4) .. (249.71,23.01) .. controls (249.71,30.63) and (243.53,36.8) .. (235.92,36.8) .. controls (228.3,36.8) and (222.13,30.63) .. (222.13,23.01) -- cycle ;
\draw  [fill={rgb, 255:red, 0; green, 0; blue, 0 }  ,fill opacity=1 ] (230.33,15.49) .. controls (230.33,14.51) and (231.12,13.72) .. (232.1,13.72) .. controls (233.07,13.72) and (233.87,14.51) .. (233.87,15.49) .. controls (233.87,16.47) and (233.07,17.26) .. (232.1,17.26) .. controls (231.12,17.26) and (230.33,16.47) .. (230.33,15.49) -- cycle ;
\draw  [fill={rgb, 255:red, 0; green, 0; blue, 0 }  ,fill opacity=1 ] (238.61,15.49) .. controls (238.61,14.51) and (239.4,13.72) .. (240.37,13.72) .. controls (241.35,13.72) and (242.14,14.51) .. (242.14,15.49) .. controls (242.14,16.47) and (241.35,17.26) .. (240.37,17.26) .. controls (239.4,17.26) and (238.61,16.47) .. (238.61,15.49) -- cycle ;

\draw  [draw opacity=0] (249.56,60.11) .. controls (249,62.05) and (243.11,63.57) .. (235.92,63.57) .. controls (228.36,63.57) and (222.23,61.89) .. (222.23,59.81) .. controls (222.23,59.68) and (222.26,59.56) .. (222.3,59.44) -- (235.92,59.81) -- cycle ; \draw  [color={rgb, 255:red, 128; green, 128; blue, 128 }  ,draw opacity=1 ] (249.56,60.11) .. controls (249,62.05) and (243.11,63.57) .. (235.92,63.57) .. controls (228.36,63.57) and (222.23,61.89) .. (222.23,59.81) .. controls (222.23,59.68) and (222.26,59.56) .. (222.3,59.44) ;  
\draw  [draw opacity=0][dash pattern={on 0.84pt off 2.51pt}] (222.57,58.88) .. controls (223.67,57.11) and (229.29,55.75) .. (236.06,55.75) .. controls (243.51,55.75) and (249.57,57.39) .. (249.75,59.42) -- (236.06,59.51) -- cycle ; \draw  [color={rgb, 255:red, 128; green, 128; blue, 128 }  ,draw opacity=1 ][dash pattern={on 0.84pt off 2.51pt}] (222.57,58.88) .. controls (223.67,57.11) and (229.29,55.75) .. (236.06,55.75) .. controls (243.51,55.75) and (249.57,57.39) .. (249.75,59.42) ;  
\draw   (222.13,59.81) .. controls (222.13,52.2) and (228.3,46.02) .. (235.92,46.02) .. controls (243.53,46.02) and (249.71,52.2) .. (249.71,59.81) .. controls (249.71,67.43) and (243.53,73.6) .. (235.92,73.6) .. controls (228.3,73.6) and (222.13,67.43) .. (222.13,59.81) -- cycle ;
\draw  [fill={rgb, 255:red, 0; green, 0; blue, 0 }  ,fill opacity=1 ] (230.33,52.29) .. controls (230.33,51.31) and (231.12,50.52) .. (232.1,50.52) .. controls (233.07,50.52) and (233.87,51.31) .. (233.87,52.29) .. controls (233.87,53.27) and (233.07,54.06) .. (232.1,54.06) .. controls (231.12,54.06) and (230.33,53.27) .. (230.33,52.29) -- cycle ;
\draw  [fill={rgb, 255:red, 0; green, 0; blue, 0 }  ,fill opacity=1 ] (238.61,52.29) .. controls (238.61,51.31) and (239.4,50.52) .. (240.37,50.52) .. controls (241.35,50.52) and (242.14,51.31) .. (242.14,52.29) .. controls (242.14,53.27) and (241.35,54.06) .. (240.37,54.06) .. controls (239.4,54.06) and (238.61,53.27) .. (238.61,52.29) -- cycle ;

\draw  [draw opacity=0] (195.94,39.76) .. controls (195.29,42) and (188.45,43.77) .. (180.11,43.77) .. controls (171.35,43.77) and (164.24,41.82) .. (164.24,39.41) .. controls (164.24,39.26) and (164.27,39.12) .. (164.32,38.98) -- (180.11,39.41) -- cycle ; \draw  [color={rgb, 255:red, 128; green, 128; blue, 128 }  ,draw opacity=1 ] (195.94,39.76) .. controls (195.29,42) and (188.45,43.77) .. (180.11,43.77) .. controls (171.35,43.77) and (164.24,41.82) .. (164.24,39.41) .. controls (164.24,39.26) and (164.27,39.12) .. (164.32,38.98) ;  
\draw  [draw opacity=0][dash pattern={on 0.84pt off 2.51pt}] (164.63,38.33) .. controls (165.9,36.27) and (172.43,34.71) .. (180.28,34.71) .. controls (188.93,34.71) and (195.96,36.6) .. (196.15,38.96) -- (180.28,39.06) -- cycle ; \draw  [color={rgb, 255:red, 128; green, 128; blue, 128 }  ,draw opacity=1 ][dash pattern={on 0.84pt off 2.51pt}] (164.63,38.33) .. controls (165.9,36.27) and (172.43,34.71) .. (180.28,34.71) .. controls (188.93,34.71) and (195.96,36.6) .. (196.15,38.96) ;  
\draw   (164.12,39.41) .. controls (164.12,30.58) and (171.28,23.42) .. (180.11,23.42) .. controls (188.95,23.42) and (196.11,30.58) .. (196.11,39.41) .. controls (196.11,48.25) and (188.95,55.41) .. (180.11,55.41) .. controls (171.28,55.41) and (164.12,48.25) .. (164.12,39.41) -- cycle ;
\draw  [fill={rgb, 255:red, 0; green, 0; blue, 0 }  ,fill opacity=1 ] (170.63,30.49) .. controls (170.63,29.35) and (171.55,28.44) .. (172.68,28.44) .. controls (173.82,28.44) and (174.73,29.35) .. (174.73,30.49) .. controls (174.73,31.62) and (173.82,32.54) .. (172.68,32.54) .. controls (171.55,32.54) and (170.63,31.62) .. (170.63,30.49) -- cycle ;
\draw  [fill={rgb, 255:red, 0; green, 0; blue, 0 }  ,fill opacity=1 ] (178.23,30.49) .. controls (178.23,29.35) and (179.15,28.44) .. (180.28,28.44) .. controls (181.42,28.44) and (182.33,29.35) .. (182.33,30.49) .. controls (182.33,31.62) and (181.42,32.54) .. (180.28,32.54) .. controls (179.15,32.54) and (178.23,31.62) .. (178.23,30.49) -- cycle ;
\draw  [fill={rgb, 255:red, 0; green, 0; blue, 0 }  ,fill opacity=1 ] (185.83,30.49) .. controls (185.83,29.35) and (186.75,28.44) .. (187.88,28.44) .. controls (189.02,28.44) and (189.93,29.35) .. (189.93,30.49) .. controls (189.93,31.62) and (189.02,32.54) .. (187.88,32.54) .. controls (186.75,32.54) and (185.83,31.62) .. (185.83,30.49) -- cycle ;

\draw (204.5,30.9) node [anchor=north west][inner sep=0.75pt]    {$-$};
\draw (21,31.61) node [anchor=north west][inner sep=0.75pt]    {$h\ =\ $};
\draw (127,31.61) node [anchor=north west][inner sep=0.75pt]    {$t\ =\ $};
\draw (97,31.61) node [anchor=north west][inner sep=0.75pt]    {$,$};

\end{tikzpicture}
\end{center}
The Frobenius algebra $A_{h, t}$ is given by the state space of a circle
\[
    \Hom_{\Cob_{\bdot / l}}(\emptyset, \bigcirc) \cong A_{h, t}
\]
where the two generators correspond to:
\begin{center}
    \tikzset{every picture/.style={line width=0.75pt}} 

\begin{tikzpicture}[x=0.75pt,y=0.75pt,yscale=-1,xscale=1]

\draw  [draw opacity=0][dash pattern={on 0.84pt off 2.51pt}] (25.7,33.61) .. controls (23.49,31.96) and (21.86,26.52) .. (21.86,20.07) .. controls (21.86,13.63) and (23.48,8.2) .. (25.7,6.54) -- (27.13,20.07) -- cycle ; \draw  [dash pattern={on 0.84pt off 2.51pt}] (25.7,33.61) .. controls (23.49,31.96) and (21.86,26.52) .. (21.86,20.07) .. controls (21.86,13.63) and (23.48,8.2) .. (25.7,6.54) ;  
\draw  [draw opacity=0] (26.55,6.1) .. controls (26.74,6.04) and (26.93,6.02) .. (27.13,6.02) .. controls (30.04,6.02) and (32.4,12.31) .. (32.4,20.07) .. controls (32.4,27.84) and (30.04,34.13) .. (27.13,34.13) .. controls (26.93,34.13) and (26.74,34.11) .. (26.55,34.05) -- (27.13,20.07) -- cycle ; \draw   (26.55,6.1) .. controls (26.74,6.04) and (26.93,6.02) .. (27.13,6.02) .. controls (30.04,6.02) and (32.4,12.31) .. (32.4,20.07) .. controls (32.4,27.84) and (30.04,34.13) .. (27.13,34.13) .. controls (26.93,34.13) and (26.74,34.11) .. (26.55,34.05) ;  
\draw  [draw opacity=0] (27.71,6.02) .. controls (27.51,6.02) and (27.32,6.02) .. (27.13,6.02) .. controls (18,6.02) and (10.61,12.31) .. (10.61,20.07) .. controls (10.61,27.84) and (18,34.13) .. (27.13,34.13) .. controls (27.32,34.13) and (27.51,34.13) .. (27.71,34.12) -- (27.13,20.07) -- cycle ; \draw   (27.71,6.02) .. controls (27.51,6.02) and (27.32,6.02) .. (27.13,6.02) .. controls (18,6.02) and (10.61,12.31) .. (10.61,20.07) .. controls (10.61,27.84) and (18,34.13) .. (27.13,34.13) .. controls (27.32,34.13) and (27.51,34.13) .. (27.71,34.12) ;  

\draw  [draw opacity=0][dash pattern={on 0.84pt off 2.51pt}] (117.7,34.61) .. controls (115.49,32.96) and (113.86,27.52) .. (113.86,21.07) .. controls (113.86,14.63) and (115.48,9.2) .. (117.7,7.54) -- (119.13,21.07) -- cycle ; \draw  [dash pattern={on 0.84pt off 2.51pt}] (117.7,34.61) .. controls (115.49,32.96) and (113.86,27.52) .. (113.86,21.07) .. controls (113.86,14.63) and (115.48,9.2) .. (117.7,7.54) ;  
\draw  [draw opacity=0] (118.55,7.1) .. controls (118.74,7.04) and (118.93,7.02) .. (119.13,7.02) .. controls (122.04,7.02) and (124.4,13.31) .. (124.4,21.07) .. controls (124.4,28.84) and (122.04,35.13) .. (119.13,35.13) .. controls (118.93,35.13) and (118.74,35.11) .. (118.55,35.05) -- (119.13,21.07) -- cycle ; \draw   (118.55,7.1) .. controls (118.74,7.04) and (118.93,7.02) .. (119.13,7.02) .. controls (122.04,7.02) and (124.4,13.31) .. (124.4,21.07) .. controls (124.4,28.84) and (122.04,35.13) .. (119.13,35.13) .. controls (118.93,35.13) and (118.74,35.11) .. (118.55,35.05) ;  
\draw  [draw opacity=0] (119.71,7.02) .. controls (119.51,7.02) and (119.32,7.02) .. (119.13,7.02) .. controls (110,7.02) and (102.61,13.31) .. (102.61,21.07) .. controls (102.61,28.84) and (110,35.13) .. (119.13,35.13) .. controls (119.32,35.13) and (119.51,35.13) .. (119.71,35.12) -- (119.13,21.07) -- cycle ; \draw   (119.71,7.02) .. controls (119.51,7.02) and (119.32,7.02) .. (119.13,7.02) .. controls (110,7.02) and (102.61,13.31) .. (102.61,21.07) .. controls (102.61,28.84) and (110,35.13) .. (119.13,35.13) .. controls (119.32,35.13) and (119.51,35.13) .. (119.71,35.12) ;  
\draw  [fill={rgb, 255:red, 0; green, 0; blue, 0 }  ,fill opacity=1 ] (105.49,20.82) .. controls (105.49,19.96) and (106.19,19.27) .. (107.04,19.27) .. controls (107.9,19.27) and (108.6,19.96) .. (108.6,20.82) .. controls (108.6,21.68) and (107.9,22.37) .. (107.04,22.37) .. controls (106.19,22.37) and (105.49,21.68) .. (105.49,20.82) -- cycle ;

\draw (29.13,20.07) node [anchor=west] [inner sep=0.75pt]    {$\ \leftrightarrow \ 1,$};
\draw (121.13,21.07) node [anchor=west] [inner sep=0.75pt]    {$\ \leftrightarrow \ X$};

\end{tikzpicture}
\end{center}
and the operations of $A_{h, t}$ correspond to:
\begin{center}
    \input{tikzpictures/frobenius-ops-cob}
\end{center}
Here, cobordisms are drawn so that the time axis $I$ runs from left to right. From the local relations, two dots on the component can be reduced as:
\begin{center}
    \tikzset{every picture/.style={line width=0.75pt}} 

\begin{tikzpicture}[x=0.75pt,y=0.75pt,yscale=-.75,xscale=.75]

\draw  [dash pattern={on 0.84pt off 2.51pt}] (10.93,11.56) -- (56.56,11.56) -- (56.56,55.49) -- (10.93,55.49) -- cycle ;
\draw  [dash pattern={on 0.84pt off 2.51pt}] (122.41,11.06) -- (169.08,11.06) -- (169.08,55.99) -- (122.41,55.99) -- cycle ;
\draw  [dash pattern={on 0.84pt off 2.51pt}] (224.95,12.06) -- (269.54,12.06) -- (269.54,54.99) -- (224.95,54.99) -- cycle ;
\draw  [fill={rgb, 255:red, 0; green, 0; blue, 0 }  ,fill opacity=1 ] (23.5,34.2) .. controls (23.5,32.43) and (24.93,31) .. (26.7,31) .. controls (28.47,31) and (29.91,32.43) .. (29.91,34.2) .. controls (29.91,35.97) and (28.47,37.41) .. (26.7,37.41) .. controls (24.93,37.41) and (23.5,35.97) .. (23.5,34.2) -- cycle ;
\draw  [fill={rgb, 255:red, 0; green, 0; blue, 0 }  ,fill opacity=1 ] (36,34.2) .. controls (36,32.43) and (37.43,31) .. (39.2,31) .. controls (40.97,31) and (42.41,32.43) .. (42.41,34.2) .. controls (42.41,35.97) and (40.97,37.41) .. (39.2,37.41) .. controls (37.43,37.41) and (36,35.97) .. (36,34.2) -- cycle ;
\draw  [fill={rgb, 255:red, 0; green, 0; blue, 0 }  ,fill opacity=1 ] (142.54,33.52) .. controls (142.54,31.75) and (143.97,30.32) .. (145.74,30.32) .. controls (147.51,30.32) and (148.95,31.75) .. (148.95,33.52) .. controls (148.95,35.29) and (147.51,36.73) .. (145.74,36.73) .. controls (143.97,36.73) and (142.54,35.29) .. (142.54,33.52) -- cycle ;

\draw (68,25.32) node [anchor=north west][inner sep=0.75pt]    {$=$};
\draw (183,25.32) node [anchor=north west][inner sep=0.75pt]    {$+$};
\draw (103,25.32) node [anchor=north west][inner sep=0.75pt]    {$h$};
\draw (207,25.32) node [anchor=north west][inner sep=0.75pt]    {$t$};

\end{tikzpicture}
\end{center}
corresponding to the identity $X^2 = hX + t$ in $A_{h, t}$. We also let $Y = X - h$, and denote the corresponding element by a \textit{hollow dot}%
\footnote{
    Our definition of the hollow dot $\hdot$ differs by an overall sign from the one defined in \cite{BHP:2023} and in \cite{Khovanov:2004}.
} 
\begin{center}
    \tikzset{every picture/.style={line width=0.75pt}} 

\begin{tikzpicture}[x=0.75pt,y=0.75pt,yscale=-.75,xscale=.75]

\draw  [dash pattern={on 0.84pt off 2.51pt}] (10.93,11.56) -- (56.56,11.56) -- (56.56,55.49) -- (10.93,55.49) -- cycle ;
\draw  [dash pattern={on 0.84pt off 2.51pt}] (101.41,12.06) -- (148.08,12.06) -- (148.08,56.99) -- (101.41,56.99) -- cycle ;
\draw  [color={rgb, 255:red, 0; green, 0; blue, 0 }  ,draw opacity=1 ][fill={rgb, 255:red, 255; green, 255; blue, 255 }  ,fill opacity=1 ][line width=0.75]  (29.5,34.2) .. controls (29.5,32.43) and (30.93,31) .. (32.7,31) .. controls (34.47,31) and (35.91,32.43) .. (35.91,34.2) .. controls (35.91,35.97) and (34.47,37.41) .. (32.7,37.41) .. controls (30.93,37.41) and (29.5,35.97) .. (29.5,34.2) -- cycle ;
\draw  [fill={rgb, 255:red, 0; green, 0; blue, 0 }  ,fill opacity=1 ] (121.54,34.52) .. controls (121.54,32.75) and (122.97,31.32) .. (124.74,31.32) .. controls (126.51,31.32) and (127.95,32.75) .. (127.95,34.52) .. controls (127.95,36.29) and (126.51,37.73) .. (124.74,37.73) .. controls (122.97,37.73) and (121.54,36.29) .. (121.54,34.52) -- cycle ;
\draw  [dash pattern={on 0.84pt off 2.51pt}] (209.95,11.91) -- (254.54,11.91) -- (254.54,54.84) -- (209.95,54.84) -- cycle ;

\draw (68,25.32) node [anchor=north west][inner sep=0.75pt]    {$=$};
\draw (167,24.24) node [anchor=north west][inner sep=0.75pt]    {$-$};
\draw (194,24.17) node [anchor=north west][inner sep=0.75pt]    {$h$};

\end{tikzpicture}
\end{center}
Using the hollow dot, (NC) can be rewritten:
\begin{center}
    \input{tikzpictures/hollow-dot-NC}
\end{center}
The following relations are also useful:
\begin{center}
    \tikzset{every picture/.style={line width=0.75pt}} 

\begin{tikzpicture}[x=0.75pt,y=0.75pt,yscale=-.75,xscale=.75]

\draw  [dash pattern={on 0.84pt off 2.51pt}] (10.93,11.56) -- (56.56,11.56) -- (56.56,55.49) -- (10.93,55.49) -- cycle ;
\draw  [dash pattern={on 0.84pt off 2.51pt}] (128.41,11.06) -- (175.08,11.06) -- (175.08,55.99) -- (128.41,55.99) -- cycle ;
\draw  [dash pattern={on 0.84pt off 2.51pt}] (230.95,12.06) -- (275.54,12.06) -- (275.54,54.99) -- (230.95,54.99) -- cycle ;
\draw  [color={rgb, 255:red, 0; green, 0; blue, 0 }  ,draw opacity=1 ][fill={rgb, 255:red, 255; green, 255; blue, 255 }  ,fill opacity=1 ] (23.5,34.2) .. controls (23.5,32.43) and (24.93,31) .. (26.7,31) .. controls (28.47,31) and (29.91,32.43) .. (29.91,34.2) .. controls (29.91,35.97) and (28.47,37.41) .. (26.7,37.41) .. controls (24.93,37.41) and (23.5,35.97) .. (23.5,34.2) -- cycle ;
\draw  [color={rgb, 255:red, 0; green, 0; blue, 0 }  ,draw opacity=1 ][fill={rgb, 255:red, 255; green, 255; blue, 255 }  ,fill opacity=1 ] (36,34.2) .. controls (36,32.43) and (37.43,31) .. (39.2,31) .. controls (40.97,31) and (42.41,32.43) .. (42.41,34.2) .. controls (42.41,35.97) and (40.97,37.41) .. (39.2,37.41) .. controls (37.43,37.41) and (36,35.97) .. (36,34.2) -- cycle ;
\draw  [color={rgb, 255:red, 0; green, 0; blue, 0 }  ,draw opacity=1 ][fill={rgb, 255:red, 255; green, 255; blue, 255 }  ,fill opacity=1 ] (148.54,33.52) .. controls (148.54,31.75) and (149.97,30.32) .. (151.74,30.32) .. controls (153.51,30.32) and (154.95,31.75) .. (154.95,33.52) .. controls (154.95,35.29) and (153.51,36.73) .. (151.74,36.73) .. controls (149.97,36.73) and (148.54,35.29) .. (148.54,33.52) -- cycle ;
\draw  [dash pattern={on 0.84pt off 2.51pt}] (9.93,68.56) -- (55.56,68.56) -- (55.56,112.49) -- (9.93,112.49) -- cycle ;
\draw  [dash pattern={on 0.84pt off 2.51pt}] (129.95,70.06) -- (174.54,70.06) -- (174.54,112.99) -- (129.95,112.99) -- cycle ;
\draw  [color={rgb, 255:red, 0; green, 0; blue, 0 }  ,draw opacity=1 ][fill={rgb, 255:red, 0; green, 0; blue, 0 }  ,fill opacity=1 ] (22.5,91.2) .. controls (22.5,89.43) and (23.93,88) .. (25.7,88) .. controls (27.47,88) and (28.91,89.43) .. (28.91,91.2) .. controls (28.91,92.97) and (27.47,94.41) .. (25.7,94.41) .. controls (23.93,94.41) and (22.5,92.97) .. (22.5,91.2) -- cycle ;
\draw  [color={rgb, 255:red, 0; green, 0; blue, 0 }  ,draw opacity=1 ][fill={rgb, 255:red, 255; green, 255; blue, 255 }  ,fill opacity=1 ] (35,91.2) .. controls (35,89.43) and (36.43,88) .. (38.2,88) .. controls (39.97,88) and (41.41,89.43) .. (41.41,91.2) .. controls (41.41,92.97) and (39.97,94.41) .. (38.2,94.41) .. controls (36.43,94.41) and (35,92.97) .. (35,91.2) -- cycle ;
\draw  [draw opacity=0] (53.81,147.4) .. controls (53.07,150) and (45.16,152.04) .. (35.51,152.04) .. controls (25.37,152.04) and (17.15,149.78) .. (17.15,147) .. controls (17.15,146.83) and (17.18,146.66) .. (17.24,146.5) -- (35.51,147) -- cycle ; \draw  [color={rgb, 255:red, 128; green, 128; blue, 128 }  ,draw opacity=1 ] (53.81,147.4) .. controls (53.07,150) and (45.16,152.04) .. (35.51,152.04) .. controls (25.37,152.04) and (17.15,149.78) .. (17.15,147) .. controls (17.15,146.83) and (17.18,146.66) .. (17.24,146.5) ;  
\draw  [draw opacity=0][dash pattern={on 0.84pt off 2.51pt}] (17.6,145.75) .. controls (19.07,143.37) and (26.62,141.56) .. (35.71,141.56) .. controls (45.7,141.56) and (53.83,143.75) .. (54.06,146.48) -- (35.71,146.6) -- cycle ; \draw  [color={rgb, 255:red, 128; green, 128; blue, 128 }  ,draw opacity=1 ][dash pattern={on 0.84pt off 2.51pt}] (17.6,145.75) .. controls (19.07,143.37) and (26.62,141.56) .. (35.71,141.56) .. controls (45.7,141.56) and (53.83,143.75) .. (54.06,146.48) ;  
\draw   (17.01,147) .. controls (17.01,136.78) and (25.29,128.5) .. (35.51,128.5) .. controls (45.73,128.5) and (54.01,136.78) .. (54.01,147) .. controls (54.01,157.22) and (45.73,165.5) .. (35.51,165.5) .. controls (25.29,165.5) and (17.01,157.22) .. (17.01,147) -- cycle ;
\draw  [color={rgb, 255:red, 0; green, 0; blue, 0 }  ,draw opacity=1 ][fill={rgb, 255:red, 255; green, 255; blue, 255 }  ,fill opacity=1 ] (32.31,135.7) .. controls (32.31,133.93) and (33.74,132.5) .. (35.51,132.5) .. controls (37.28,132.5) and (38.71,133.93) .. (38.71,135.7) .. controls (38.71,137.47) and (37.28,138.91) .. (35.51,138.91) .. controls (33.74,138.91) and (32.31,137.47) .. (32.31,135.7) -- cycle ;

\draw (67,25.32) node [anchor=north west][inner sep=0.75pt]    {$=$};
\draw (189,25.32) node [anchor=north west][inner sep=0.75pt]    {$+$};
\draw (93,25.32) node [anchor=north west][inner sep=0.75pt]    {$\ -h$};
\draw (213,25.32) node [anchor=north west][inner sep=0.75pt]    {$t$};
\draw (67,82.32) node [anchor=north west][inner sep=0.75pt]    {$=$};
\draw (111,83.32) node [anchor=north west][inner sep=0.75pt]    {$t$};
\draw (67,137.4) node [anchor=north west][inner sep=0.75pt]    {$=\ 1$};

\end{tikzpicture}
\end{center}
We also let $U := X + Y = 2X - h$, and denote the corresponding element by a \textit{star} as in \cite[(16)]{Khovanov:2022}:
\begin{center}
    \tikzset{every picture/.style={line width=0.75pt}} 

\begin{tikzpicture}[x=0.75pt,y=0.75pt,yscale=-.75,xscale=.75]

\draw  [dash pattern={on 0.84pt off 2.51pt}] (10.93,11.56) -- (56.56,11.56) -- (56.56,55.49) -- (10.93,55.49) -- cycle ;
\draw  [dash pattern={on 0.84pt off 2.51pt}] (96.41,12.06) -- (143.08,12.06) -- (143.08,56.99) -- (96.41,56.99) -- cycle ;
\draw  [dash pattern={on 0.84pt off 2.51pt}] (181.95,13.06) -- (226.54,13.06) -- (226.54,55.99) -- (181.95,55.99) -- cycle ;
\draw  [fill={rgb, 255:red, 0; green, 0; blue, 0 }  ,fill opacity=1 ] (116.54,34.52) .. controls (116.54,32.75) and (117.97,31.32) .. (119.74,31.32) .. controls (121.51,31.32) and (122.95,32.75) .. (122.95,34.52) .. controls (122.95,36.29) and (121.51,37.73) .. (119.74,37.73) .. controls (117.97,37.73) and (116.54,36.29) .. (116.54,34.52) -- cycle ;
\draw  [color={rgb, 255:red, 0; green, 0; blue, 0 }  ,draw opacity=1 ][fill={rgb, 255:red, 255; green, 255; blue, 255 }  ,fill opacity=1 ] (201.04,34.52) .. controls (201.04,32.75) and (202.47,31.32) .. (204.24,31.32) .. controls (206.01,31.32) and (207.45,32.75) .. (207.45,34.52) .. controls (207.45,36.29) and (206.01,37.73) .. (204.24,37.73) .. controls (202.47,37.73) and (201.04,36.29) .. (201.04,34.52) -- cycle ;

\draw (68,25.32) node [anchor=north west][inner sep=0.75pt]    {$=$};
\draw (157,26.32) node [anchor=north west][inner sep=0.75pt]    {$+$};
\draw (26,26) node [anchor=north west][inner sep=0.75pt]    {$\star $};

\end{tikzpicture}
\end{center}
From (NC), one can see that a star corresponds to attaching a handle to the surface. Also note that $U^2 = h^2 + 4t$ is the discriminant of the quadratic polynomial $X^2 - hX - t$. 

\section{Involutions}
\label{sec:involution}

\subsection{Involution \texorpdfstring{$\sigma$}{sigma}}

Consider the $R_{h, t}$-algebra involution 
\[
    \sigma\colon R_{h, t}[X] \to R_{h, t}[X],\quad
    X \mapsto h - X
\]
which induces an $R_{h, t}$-algebra involution 
\[
    \sigma\colon A_{h, t} \to A_{h, t}.
\]
Note that $\sigma$ is not a Frobenius algebra isomorphism, since it adds the minus sign to $\epsilon$. Namely, we have

\begin{prop}
\label{prop:sigma-ops}
    \begin{gather*}
        m \circ (\sigma \otimes \sigma) = \sigma \circ m, \quad
        \iota = \sigma \circ \iota, \\
        \Delta \circ \sigma = -(\sigma \otimes \sigma) \circ \Delta,\quad 
        \epsilon \circ \sigma = -\epsilon.
    \end{gather*}
\end{prop}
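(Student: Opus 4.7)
The plan is to verify all four identities by reducing to direct computation on the generators $\{1, X\}$ of $A_{h,t}$ as a free $R_{h,t}$-module, using the explicit formulas for $m, \iota, \Delta, \epsilon$ given in the setup and the defining rule $\sigma(X) = h - X$.

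First I would treat the algebra identities. The map $\sigma$ is by construction an $R_{h,t}$-algebra endomorphism of the polynomial ring $R_{h,t}[X]$, so it suffices to check that it preserves the defining ideal:
\[
    \sigma(X^2 - hX - t) = (h-X)^2 - h(h-X) - t = X^2 - hX - t.
\]
Hence $\sigma$ descends to an $R_{h,t}$-algebra automorphism of $A_{h,t}$, which is exactly $m \circ (\sigma \otimes \sigma) = \sigma \circ m$, and $\sigma(\iota(1)) = \sigma(1) = 1 = \iota(1)$ gives $\sigma \circ \iota = \iota$.

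Next I would handle the counit identity by evaluating on the basis: $\epsilon(\sigma(1)) = \epsilon(1) = 0 = -\epsilon(1)$ and $\epsilon(\sigma(X)) = \epsilon(h - X) = h\,\epsilon(1) - \epsilon(X) = -1 = -\epsilon(X)$. Finally, for the comultiplication, I would compute both sides on $1$ and $X$. On $1$:
\[
    (\sigma\otimes\sigma)\bigl(\Delta(1)\bigr) = 1\otimes(h-X) + (h-X)\otimes 1 - h(1\otimes 1) = -\bigl(1\otimes X + X\otimes 1 - h(1\otimes 1)\bigr) = -\Delta(1),
\]
while $\Delta(\sigma(1)) = \Delta(1)$, matching the claimed sign. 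On $X$:
\[
    (\sigma\otimes\sigma)\bigl(\Delta(X)\bigr) = (h-X)\otimes(h-X) + t(1\otimes 1),
\]
which after expansion and negation can be compared with $\Delta(\sigma(X)) = h\,\Delta(1) - \Delta(X)$; expanding both sides and collecting terms shows they agree.

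No step should pose a real obstacle: the main bookkeeping is tracking signs in the computation on $X$ for the coproduct identity, and ensuring $\sigma$ is well-defined at the level of the Frobenius algebra quotient, both of which are routine.
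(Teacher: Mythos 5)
Your proof is correct, and since the paper states this proposition without proof, treating it as a routine verification, your direct computation on the $R_{h,t}$-basis $\{1,X\}$ is exactly the intended argument. The observation that $\sigma$ preserves the ideal $(X^2-hX-t)$ cleanly disposes of well-definedness and the two algebra identities at once, and the basis computations for $\epsilon$ and $\Delta$ (including the sign bookkeeping on $\Delta(X)$) check out.
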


With the diagrammatic description, the isomorphism $\sigma$ can be expressed as a cobordism:
\begin{center}
    \input{tikzpictures/sigma-cob1}
\end{center}
or as a dotted cobordism:
\begin{center}
    \tikzset{every picture/.style={line width=0.75pt}} 

\begin{tikzpicture}[x=0.75pt,y=0.75pt,yscale=-1,xscale=1]

\draw  [color={rgb, 255:red, 0; green, 0; blue, 0 }  ,draw opacity=1 ] (60.6,28.61) .. controls (60.6,20.9) and (63.18,14.66) .. (66.35,14.66) .. controls (69.53,14.66) and (72.11,20.9) .. (72.11,28.61) .. controls (72.11,36.31) and (69.53,42.56) .. (66.35,42.56) .. controls (63.18,42.56) and (60.6,36.31) .. (60.6,28.61) -- cycle ;
\draw  [draw opacity=0] (65.77,14.56) .. controls (65.97,14.55) and (66.16,14.55) .. (66.35,14.55) .. controls (75.48,14.55) and (82.88,20.84) .. (82.88,28.61) .. controls (82.88,36.37) and (75.48,42.67) .. (66.35,42.67) .. controls (66.16,42.67) and (65.97,42.66) .. (65.77,42.66) -- (66.35,28.61) -- cycle ; \draw   (65.77,14.56) .. controls (65.97,14.55) and (66.16,14.55) .. (66.35,14.55) .. controls (75.48,14.55) and (82.88,20.84) .. (82.88,28.61) .. controls (82.88,36.37) and (75.48,42.67) .. (66.35,42.67) .. controls (66.16,42.67) and (65.97,42.66) .. (65.77,42.66) ;  
\draw  [draw opacity=0][dash pattern={on 0.84pt off 2.51pt}] (118.17,41.61) .. controls (115.95,39.96) and (114.32,34.52) .. (114.32,28.07) .. controls (114.32,21.63) and (115.95,16.2) .. (118.16,14.54) -- (119.59,28.07) -- cycle ; \draw  [dash pattern={on 0.84pt off 2.51pt}] (118.17,41.61) .. controls (115.95,39.96) and (114.32,34.52) .. (114.32,28.07) .. controls (114.32,21.63) and (115.95,16.2) .. (118.16,14.54) ;  
\draw  [draw opacity=0] (119.02,14.1) .. controls (119.21,14.04) and (119.4,14.02) .. (119.59,14.02) .. controls (122.5,14.02) and (124.86,20.31) .. (124.86,28.07) .. controls (124.86,35.84) and (122.5,42.13) .. (119.59,42.13) .. controls (119.4,42.13) and (119.21,42.11) .. (119.02,42.05) -- (119.59,28.07) -- cycle ; \draw   (119.02,14.1) .. controls (119.21,14.04) and (119.4,14.02) .. (119.59,14.02) .. controls (122.5,14.02) and (124.86,20.31) .. (124.86,28.07) .. controls (124.86,35.84) and (122.5,42.13) .. (119.59,42.13) .. controls (119.4,42.13) and (119.21,42.11) .. (119.02,42.05) ;  
\draw  [draw opacity=0] (120.17,14.02) .. controls (119.98,14.02) and (119.79,14.02) .. (119.59,14.02) .. controls (110.47,14.02) and (103.07,20.31) .. (103.07,28.07) .. controls (103.07,35.84) and (110.47,42.13) .. (119.59,42.13) .. controls (119.79,42.13) and (119.98,42.13) .. (120.17,42.12) -- (119.59,28.07) -- cycle ; \draw   (120.17,14.02) .. controls (119.98,14.02) and (119.79,14.02) .. (119.59,14.02) .. controls (110.47,14.02) and (103.07,20.31) .. (103.07,28.07) .. controls (103.07,35.84) and (110.47,42.13) .. (119.59,42.13) .. controls (119.79,42.13) and (119.98,42.13) .. (120.17,42.12) ;  

\draw  [color={rgb, 255:red, 0; green, 0; blue, 0 }  ,draw opacity=1 ] (159.13,28.61) .. controls (159.13,20.9) and (161.71,14.66) .. (164.89,14.66) .. controls (168.07,14.66) and (170.64,20.9) .. (170.64,28.61) .. controls (170.64,36.31) and (168.07,42.56) .. (164.89,42.56) .. controls (161.71,42.56) and (159.13,36.31) .. (159.13,28.61) -- cycle ;
\draw  [draw opacity=0] (164.31,14.56) .. controls (164.5,14.55) and (164.69,14.55) .. (164.89,14.55) .. controls (174.01,14.55) and (181.41,20.84) .. (181.41,28.61) .. controls (181.41,36.37) and (174.01,42.67) .. (164.89,42.67) .. controls (164.69,42.67) and (164.5,42.66) .. (164.31,42.66) -- (164.89,28.61) -- cycle ; \draw   (164.31,14.56) .. controls (164.5,14.55) and (164.69,14.55) .. (164.89,14.55) .. controls (174.01,14.55) and (181.41,20.84) .. (181.41,28.61) .. controls (181.41,36.37) and (174.01,42.67) .. (164.89,42.67) .. controls (164.69,42.67) and (164.5,42.66) .. (164.31,42.66) ;  
\draw  [draw opacity=0][dash pattern={on 0.84pt off 2.51pt}] (216.7,41.61) .. controls (214.49,39.96) and (212.86,34.52) .. (212.86,28.07) .. controls (212.86,21.63) and (214.48,16.2) .. (216.7,14.54) -- (218.13,28.07) -- cycle ; \draw  [dash pattern={on 0.84pt off 2.51pt}] (216.7,41.61) .. controls (214.49,39.96) and (212.86,34.52) .. (212.86,28.07) .. controls (212.86,21.63) and (214.48,16.2) .. (216.7,14.54) ;  
\draw  [draw opacity=0] (217.55,14.1) .. controls (217.74,14.04) and (217.93,14.02) .. (218.13,14.02) .. controls (221.04,14.02) and (223.4,20.31) .. (223.4,28.07) .. controls (223.4,35.84) and (221.04,42.13) .. (218.13,42.13) .. controls (217.93,42.13) and (217.74,42.11) .. (217.55,42.05) -- (218.13,28.07) -- cycle ; \draw   (217.55,14.1) .. controls (217.74,14.04) and (217.93,14.02) .. (218.13,14.02) .. controls (221.04,14.02) and (223.4,20.31) .. (223.4,28.07) .. controls (223.4,35.84) and (221.04,42.13) .. (218.13,42.13) .. controls (217.93,42.13) and (217.74,42.11) .. (217.55,42.05) ;  
\draw  [draw opacity=0] (218.71,14.02) .. controls (218.51,14.02) and (218.32,14.02) .. (218.13,14.02) .. controls (209,14.02) and (201.61,20.31) .. (201.61,28.07) .. controls (201.61,35.84) and (209,42.13) .. (218.13,42.13) .. controls (218.32,42.13) and (218.51,42.13) .. (218.71,42.12) -- (218.13,28.07) -- cycle ; \draw   (218.71,14.02) .. controls (218.51,14.02) and (218.32,14.02) .. (218.13,14.02) .. controls (209,14.02) and (201.61,20.31) .. (201.61,28.07) .. controls (201.61,35.84) and (209,42.13) .. (218.13,42.13) .. controls (218.32,42.13) and (218.51,42.13) .. (218.71,42.12) ;  

\draw  [fill={rgb, 255:red, 0; green, 0; blue, 0 }  ,fill opacity=1 ] (76.32,28.56) .. controls (76.32,27.7) and (77.01,27) .. (77.87,27) .. controls (78.72,27) and (79.42,27.7) .. (79.42,28.56) .. controls (79.42,29.41) and (78.72,30.11) .. (77.87,30.11) .. controls (77.01,30.11) and (76.32,29.41) .. (76.32,28.56) -- cycle ;
\draw  [fill={rgb, 255:red, 0; green, 0; blue, 0 }  ,fill opacity=1 ] (204.49,27.82) .. controls (204.49,26.96) and (205.19,26.27) .. (206.04,26.27) .. controls (206.9,26.27) and (207.6,26.96) .. (207.6,27.82) .. controls (207.6,28.68) and (206.9,29.37) .. (206.04,29.37) .. controls (205.19,29.37) and (204.49,28.68) .. (204.49,27.82) -- cycle ;

\draw (136.07,20.42) node [anchor=north west][inner sep=0.75pt]    {$-$};
\draw (13,18.4) node [anchor=north west][inner sep=0.75pt]    {$\sigma \ =\ $};

\end{tikzpicture}
\end{center}
With the notation of \cite{Khovanov:2022}, $\sigma$ is given by the cylinder with a \textit{defect circle} (or a \textit{seam}):
\begin{center}
    \tikzset{every picture/.style={line width=0.75pt}} 

\begin{tikzpicture}[x=0.75pt,y=0.75pt,yscale=-1,xscale=1]

\draw  [color={rgb, 255:red, 0; green, 0; blue, 0 }  ,draw opacity=1 ] (57.43,24.17) .. controls (57.43,16.46) and (60,10.22) .. (63.18,10.22) .. controls (66.36,10.22) and (68.93,16.46) .. (68.93,24.17) .. controls (68.93,31.88) and (66.36,38.12) .. (63.18,38.12) .. controls (60,38.12) and (57.43,31.88) .. (57.43,24.17) -- cycle ;
\draw [color={rgb, 255:red, 0; green, 0; blue, 0 }  ,draw opacity=1 ]   (63.18,38.12) -- (122.44,38.12) ;
\draw [color={rgb, 255:red, 0; green, 0; blue, 0 }  ,draw opacity=1 ]   (63.18,10.22) -- (122.44,10.22) ;
\draw  [draw opacity=0][dash pattern={on 0.84pt off 2.51pt}] (120.15,37.91) .. controls (117.93,36.26) and (116.31,30.82) .. (116.31,24.37) .. controls (116.31,17.93) and (117.93,12.5) .. (120.15,10.84) -- (121.58,24.37) -- cycle ; \draw  [dash pattern={on 0.84pt off 2.51pt}] (120.15,37.91) .. controls (117.93,36.26) and (116.31,30.82) .. (116.31,24.37) .. controls (116.31,17.93) and (117.93,12.5) .. (120.15,10.84) ;  
\draw  [draw opacity=0] (121,10.4) .. controls (121.19,10.34) and (121.38,10.32) .. (121.58,10.32) .. controls (124.49,10.32) and (126.85,16.61) .. (126.85,24.37) .. controls (126.85,32.14) and (124.49,38.43) .. (121.58,38.43) .. controls (121.38,38.43) and (121.19,38.4) .. (121,38.35) -- (121.58,24.37) -- cycle ; \draw   (121,10.4) .. controls (121.19,10.34) and (121.38,10.32) .. (121.58,10.32) .. controls (124.49,10.32) and (126.85,16.61) .. (126.85,24.37) .. controls (126.85,32.14) and (124.49,38.43) .. (121.58,38.43) .. controls (121.38,38.43) and (121.19,38.4) .. (121,38.35) ;  

\draw  [draw opacity=0] (92.81,10.22) .. controls (93,10.16) and (93.19,10.14) .. (93.39,10.14) .. controls (96.3,10.14) and (98.66,16.43) .. (98.66,24.19) .. controls (98.66,31.96) and (96.3,38.25) .. (93.39,38.25) .. controls (93.19,38.25) and (93,38.22) .. (92.81,38.17) -- (93.39,24.19) -- cycle ; \draw  [color={rgb, 255:red, 208; green, 2; blue, 27 }  ,draw opacity=1 ] (92.81,10.22) .. controls (93,10.16) and (93.19,10.14) .. (93.39,10.14) .. controls (96.3,10.14) and (98.66,16.43) .. (98.66,24.19) .. controls (98.66,31.96) and (96.3,38.25) .. (93.39,38.25) .. controls (93.19,38.25) and (93,38.22) .. (92.81,38.17) ;  
\draw [color={rgb, 255:red, 208; green, 2; blue, 27 }  ,draw opacity=1 ]   (99.2,24) -- (104.2,24) ;
\draw  [draw opacity=0][dash pattern={on 0.84pt off 2.51pt}] (90.5,36.01) .. controls (88.42,34.46) and (86.89,29.36) .. (86.89,23.32) .. controls (86.89,17.28) and (88.41,12.18) .. (90.49,10.62) -- (91.83,23.32) -- cycle ; \draw  [color={rgb, 255:red, 208; green, 2; blue, 27 }  ,draw opacity=1 ][dash pattern={on 0.84pt off 2.51pt}] (90.5,36.01) .. controls (88.42,34.46) and (86.89,29.36) .. (86.89,23.32) .. controls (86.89,17.28) and (88.41,12.18) .. (90.49,10.62) ;  

\draw (11,14.4) node [anchor=north west][inner sep=0.75pt]    {$\sigma \ =\ $};

\end{tikzpicture}
\end{center}
The short line segment indicates the preferred coorientation of the defect line. The TQFT $\mcF_{h, t}$ together with the involution $\sigma$ coincides with those obtained from the evaluation $\langle \cdot \rangle$ of seamed surfaces given in \cite[Section 3.1]{Khovanov:2022}. See Lemma 3.5 and equations (69) - (75) therein. 

Since $\sigma$ is not a Frobenius algebra isomorphism, it does not induce an involution on the Khovanov complex. Nonetheless, this can be handled by introducing a \textit{twisting} of the Frobenius algebra. For any invertible element $\theta \in A_{h, t}$, the \textit{$\theta$-twisting} of $A_{h, t}$ is the Frobenius algebra $A_{h, t; \theta}$ whose algebra structure is the same as $A_{h, t}$ but the comultiplication and counit maps are twisted as 
\[
    \Delta_\theta(x) := \Delta(\theta^{-1}x),
    \quad 
    \epsilon_\theta(x) := \epsilon(\theta x).
\]
If we consider the $(-1)$-twisting of $A_{h, t}$, the equations of \Cref{prop:sigma-ops} can be rewritten as 
\begin{gather*}
    m \circ (\sigma \otimes \sigma) = \sigma \circ m, \quad
    \iota = \sigma \circ \iota, \\
    \Delta_{(-1)} \circ \sigma = (\sigma \otimes \sigma) \circ \Delta,\quad 
    \epsilon_{(-1)} \circ \sigma = \epsilon
\end{gather*}
which implies that $\sigma$ defines a Frobenius algebra isomorphism
\[
    \sigma\colon A_{h, t} \to A_{h, t; -1}.
\]

For any oriented link diagram $D$, let $\CKh_{h, t; \theta}(D)$ denote the Khovanov complex obtained from $A_{h, t; \theta}$. \Cref{prop:sigma-ops} implies that $\sigma$ induces a chain isomorphism
\[
    \sigma\colon \CKh_{h, t}(D) \to \CKh_{h, t; -1}(D),\quad 
    x \mapsto \sigma^{\otimes k}(x), \quad x\in A^{\otimes k}\subset \CKh_{h, t}(D). 
\]
Furthermore, $\sigma$ acts naturally with respect to link cobordisms. 

\begin{prop}
    Let $S\colon D \to D'$ be a link cobordism represented as a movie of link diagrams, and $\phi_S$ denote the corresponding cobordism map on $\CKh$. Then the following diagram commutes:
    \[
    \begin{tikzcd}
        \CKh_{h, t}(D) \arrow{r}{\phi_S} \arrow{d}{\sigma} & 
        \CKh_{h, t}(D') \arrow{d}{\sigma} \\
        \CKh_{h, t; -1}(D) \arrow{r}{\phi_S} & 
        \CKh_{h, t; -1}(D').
    \end{tikzcd}    
    \]
\end{prop}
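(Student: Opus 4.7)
The plan is to reduce the statement to the elementary building blocks of a link cobordism movie and then invoke \Cref{prop:sigma-ops}, reinterpreted as saying that $\sigma$ is a genuine Frobenius algebra isomorphism $A_{h,t} \to A_{h,t;-1}$. Recall that a link cobordism $S$ presented as a movie is decomposed into a sequence of (a) Reidemeister moves and (b) elementary Morse cobordisms — births, deaths, and saddles. The chain maps attached to the elementary Morse cobordisms are, locally, built by applying $\iota$, $\epsilon$, $m$, and $\Delta$ tensor-wise to the appropriate tensor factors of $A^{\otimes k}$ indexed by the circles in a resolution; the chain maps attached to Reidemeister moves are likewise assembled from these same Frobenius structure maps. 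So it suffices to check the commuting square for each elementary piece separately and then compose.

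First I would fix the conventions: the top complex $\CKh_{h,t}(D)$ and the bottom complex $\CKh_{h,t;-1}(D)$ are identical as bigraded modules, and they differ only in that the comultiplication is $\Delta_{-1} = -\Delta$ and the counit is $\epsilon_{-1} = -\epsilon$. The map $\sigma$ is $\sigma^{\otimes k}$ on each resolution summand $A^{\otimes k}$. For the birth square I would use $\sigma \circ \iota = \iota$, so composition commutes on the local tensor factor where $\iota$ acts and is the identity on all other factors; for the death square I would use $\epsilon_{-1} \circ \sigma = \epsilon$; for a merge (saddle $2 \to 1$) I would use $\sigma \circ m = m \circ (\sigma \otimes \sigma)$, noting that multiplication in $A_{h,t;-1}$ is the same $m$; and for a split (saddle $1 \to 2$) I would use $\Delta_{-1} \circ \sigma = (\sigma \otimes \sigma) \circ \Delta$, which is exactly the twisted-comultiplication identity in \Cref{prop:sigma-ops}. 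Tensoring these local identities with the identity on the unaffected circles gives commutativity of the elementary squares.

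For Reidemeister moves I would appeal to the fact that, in both Khovanov's and Bar-Natan's presentations, the chain homotopy equivalences associated with R1, R2, R3 are written explicitly as compositions of $\iota$, $\epsilon$, $m$, $\Delta$, and identity maps on the tensor factors indexed by the circles of the relevant resolutions. Because $\sigma$ intertwines each of these four structure maps with their $(-1)$-twisted counterparts, the same formulas written in $A_{h,t;-1}$ produce the Reidemeister chain maps for $\CKh_{h,t;-1}$, and the naturality square for each Reidemeister move commutes for the same reason as the elementary Morse squares. Composing all the elementary squares along the movie presentation of $S$ yields the desired commutative square.

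The only real obstacle is conceptual rather than computational: one must know that $\phi_S$ is constructed functorially from the Frobenius structure maps, so that any Frobenius algebra isomorphism $A_{h,t} \to A_{h,t;-1}$ automatically induces a natural transformation of the resulting link-cobordism invariants. This is precisely the content of the Bar-Natan / Khovanov TQFT framework, and once it is invoked, the proposition is a direct corollary of \Cref{prop:sigma-ops}. An alternative presentation — which I would use if space allows — is to package the argument in the language of the tautological functor $\mcF_{h,t}$: $\sigma$ defines a natural isomorphism $\mcF_{h,t} \Rightarrow \mcF_{h,t;-1}$ of TQFTs on $\Cob_{\bdot/l}$, and applying this to the formal Bar-Natan complex of $D$ gives the commuting square on the nose.
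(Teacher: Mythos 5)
Your proof is correct and takes the same approach as the paper, which simply observes that the cobordism map $\phi_S$ is built by composing the Frobenius algebra operations and then invokes \Cref{prop:sigma-ops}. Your write-up spells out the reduction to the elementary Morse and Reidemeister pieces and the intertwining identities for each, but the underlying argument is identical to the paper's one-line proof.
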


\begin{proof}
    Immediate from \Cref{prop:sigma-ops}, since the cobordism map $\phi_S$ is defined by decomposing $S$ into elementary cobordisms and composing the corresponding operations of the Frobenius algebra.
\end{proof}

\begin{remark}

    For an invertible element $\theta \in R_{h, t}$, from \cite[Proposition 3]{Khovanov:2004},\footnote{
        As Ito, Nakagane and Yoshida pointed out in \cite{Ito-Nakagane-Yoshida:2025}, when $\theta$ is an invertible element in $A_{h, t}$, then  a very subtle treatment is required for the construction of a twisting chain isomorphism. 
    } there is a chain isomorphism for any link diagram $D$ 
    \[
        \tau_\theta(D) \colon
        \CKh_{h, t}(D) \to \CKh_{h, t; \theta}(D)
    \]
    corresponding to the $\theta$-twisting of $A_{h, t}$. The above isomorphism $\sigma$ can be composed with the twisting isomorphism 
    \[
        \begin{tikzcd}
            \CKh_{h, t}(D) \arrow{r}{\sigma}& 
            \CKh_{h, t; -1}(D) \arrow{r}{\tau_{-1}(D)}&
            \CKh_{h, t}(D)
        \end{tikzcd}
    \]
    so that the composition is an involution on $\CKh_{h, t}(D)$. However, the twisting isomorphism $\tau_\theta(D)$ is not natural with respect to the Reidemeister moves (at least if we take the same maps as in \cite{BarNatan:2005}), as the following example shows. 
\end{remark}

\begin{ex}
    Consider diagrams $D, D'$ that are related by a single R2 move, as
    \begin{center}
        \tikzset{every picture/.style={line width=0.75pt}} 

\begin{tikzpicture}[x=0.75pt,y=0.75pt,yscale=-.9,xscale=.9]

\draw    (94.57,21.96) -- (131.95,21.96) ;
\draw [shift={(133.95,21.96)}, rotate = 180] [color={rgb, 255:red, 0; green, 0; blue, 0 }  ][line width=0.75]    (10.93,-3.29) .. controls (6.95,-1.4) and (3.31,-0.3) .. (0,0) .. controls (3.31,0.3) and (6.95,1.4) .. (10.93,3.29)   ;
\draw  [draw opacity=0][line width=1.5]  (74.33,5.06) .. controls (70.9,20.92) and (59.32,32.55) .. (45.68,32.41) .. controls (33.38,32.28) and (22.94,22.62) .. (18.74,9.04) -- (46.05,-4.49) -- cycle ; \draw  [line width=1.5]  (74.33,5.06) .. controls (70.9,20.92) and (59.32,32.55) .. (45.68,32.41) .. controls (33.38,32.28) and (22.94,22.62) .. (18.74,9.04) ;  
\draw  [draw opacity=0][fill={rgb, 255:red, 255; green, 255; blue, 255 }  ,fill opacity=1 ] (73.58,21.4) .. controls (73.55,25.37) and (70.3,28.57) .. (66.32,28.54) .. controls (62.35,28.51) and (59.15,25.26) .. (59.18,21.28) .. controls (59.21,17.3) and (62.46,14.11) .. (66.44,14.14) .. controls (70.42,14.17) and (73.61,17.42) .. (73.58,21.4) -- cycle ;
\draw  [draw opacity=0][fill={rgb, 255:red, 0; green, 0; blue, 0 }  ,fill opacity=1 ] (68.84,21.15) .. controls (68.83,22.42) and (67.8,23.44) .. (66.53,23.43) .. controls (65.25,23.42) and (64.23,22.38) .. (64.24,21.11) .. controls (64.25,19.84) and (65.29,18.82) .. (66.56,18.83) .. controls (67.83,18.84) and (68.85,19.87) .. (68.84,21.15) -- cycle ;

\draw  [draw opacity=0][fill={rgb, 255:red, 255; green, 255; blue, 255 }  ,fill opacity=1 ] (32.59,21.15) .. controls (32.56,25.12) and (29.31,28.32) .. (25.33,28.29) .. controls (21.36,28.26) and (18.16,25.01) .. (18.19,21.03) .. controls (18.22,17.05) and (21.47,13.86) .. (25.45,13.89) .. controls (29.42,13.92) and (32.62,17.17) .. (32.59,21.15) -- cycle ;
\draw  [draw opacity=0][fill={rgb, 255:red, 0; green, 0; blue, 0 }  ,fill opacity=1 ] (27.85,20.9) .. controls (27.84,22.17) and (26.81,23.19) .. (25.53,23.18) .. controls (24.26,23.17) and (23.24,22.13) .. (23.25,20.86) .. controls (23.26,19.59) and (24.3,18.57) .. (25.57,18.58) .. controls (26.84,18.59) and (27.86,19.63) .. (27.85,20.9) -- cycle ;

\draw  [draw opacity=0][line width=1.5]  (18.71,36.1) .. controls (22.97,20.78) and (33.71,9.92) .. (46.24,9.98) .. controls (58.88,10.04) and (69.6,21.18) .. (73.64,36.75) -- (46.05,50.6) -- cycle ; \draw  [line width=1.5]  (18.71,36.1) .. controls (22.97,20.78) and (33.71,9.92) .. (46.24,9.98) .. controls (58.88,10.04) and (69.6,21.18) .. (73.64,36.75) ;  
\draw [line width=1.5]    (18.74,9.04) .. controls (14.67,2.28) and (7.73,6.95) .. (7.33,20.95) .. controls (6.93,34.95) and (13.67,42.28) .. (18.71,36.1) ;
\draw  [draw opacity=0][line width=1.5]  (166.04,35.89) .. controls (171.43,32.29) and (179.3,30.04) .. (188.05,30.08) .. controls (199.81,30.13) and (209.93,34.3) .. (214.57,40.27) -- (187.98,47.53) -- cycle ; \draw  [line width=1.5]  (166.04,35.89) .. controls (171.43,32.29) and (179.3,30.04) .. (188.05,30.08) .. controls (199.81,30.13) and (209.93,34.3) .. (214.57,40.27) ;  
\draw [line width=1.5]    (166.07,8.84) .. controls (161,4.28) and (155.07,6.74) .. (154.67,20.74) .. controls (154.27,34.74) and (160.33,39.95) .. (166.04,35.89) ;
\draw  [draw opacity=0][line width=1.5]  (166.07,8.84) .. controls (171.43,12.81) and (179.45,15.35) .. (188.42,15.39) .. controls (198.65,15.44) and (207.67,12.22) .. (212.94,7.31) -- (188.51,-2.8) -- cycle ; \draw  [line width=1.5]  (166.07,8.84) .. controls (171.43,12.81) and (179.45,15.35) .. (188.42,15.39) .. controls (198.65,15.44) and (207.67,12.22) .. (212.94,7.31) ;  

\draw (181,49.16) node [anchor=north west][inner sep=0.75pt]    {$D'$};
\draw (39,49.16) node [anchor=north west][inner sep=0.75pt]    {$D$};

\end{tikzpicture}
    \end{center}
    The chain homotopy equivalence $F$ on the corresponding chain complexes is given by 
    \begin{center}
        \input{tikzpictures/example-sigma-expand}
    \end{center}    
    Here, to save space, the untwisted complex $\CKh_{h, t}$ and the twisted complex $\CKh_{h, t; \theta}$ are drawn together, and the action of the twisting isomorphism $\tau_\theta$ on each vertex is indicated by the blue loop. By focusing on the $01$-component of $\CKh_{h, t}(D)$, one can see that the following diagram does not commute
    \[
        \begin{tikzcd}
            \CKh_{h, t}(D) \arrow{d}{F} \arrow{r}{\tau_\theta(D)} & 
            \CKh_{h, t; \theta}(D) \arrow{d}{F} \\
            \CKh_{h, t}(D') \arrow{r}{\tau_\theta(D')} & 
            \CKh_{h, t; \theta}(D').
        \end{tikzcd}
    \]
\end{ex}

\begin{question}
    Is it possible to adjust the cobordism maps for the $\theta$-twisted complex $\CKh_{h, t; \theta}$ so that the twisting isomorphisms can be collectively regarded as a natural transformation? In other words, can we make the following diagram commute up to chain homotopy?
    \[
\begin{tikzcd}[row sep=3em, column sep=4em]
\CKh_{h, t}(D) 
    \arrow[r, "\tau_\theta(D)"] 
    \arrow[d, "\CKh_{h, t}(S)"] 
& \CKh_{h, t; \theta}(D) 
    \arrow[d, "\CKh_{h, t; \theta}(S)"] \\
\CKh_{h, t}(D') 
    \arrow[r, "\tau_\theta(D')"] 
& \CKh_{h, t; \theta}(D')
\end{tikzcd}
    \]
\end{question}

\begin{remark}
    If we work over $\F_2$ by tensoring it to the ground ring $R_{h, t}$, then $\sigma$ becomes a Frobenius algebra involution and the induced $\sigma$ gives an involution on $\CKh_{h, t}(-; \F_2)$. In \cite{Chen-Yang:2025}, Chen and Yang studies the \textit{(intrinsic) involutive Khovanov homology}, defined as the homology of the mapping cone of $\id + \sigma$ over $\F_2$. 
\end{remark}

\subsection{Graded involution \texorpdfstring{$\hsigma$}{sigma-hat}}
\label{subsec:hsigma}

The sign inconsistency of $\sigma$ with respect to the operations of $A_{h, t}$ can be fixed by modifying the definition of $\sigma$. Define a pair of ring involutions, $\hsigma_0$ on $R_{h, t}$ and $\hsigma_1$ on $A$ by 
\[
    \hsigma_0(r) := (-1)^{\frac{\deg(r)}{2}} r,\quad
    \hsigma_1(x) := (-1)^{\frac{\deg(x)}{2}} \sigma(x).
\]
for $r \in R_{h, t}$ and $x \in A_{h, t}$. Now, we have 
\[
    \hsigma_1(rx) = \hsigma_0(r) \hsigma_1(x)
\]
and in particular, 
\[
    \hsigma_0(1) = 1, \ \ \hsigma_0(h)=-h, \ \ \hsigma_0(t)=t, \ \ \hsigma_1(X) = X - h = Y. 
\]
Note that $\hsigma_1$ is not an $R_{h, t}$-module homomorphism on $A_{h,t}$ unlike our original involution $\sigma$. Although this might look unnatural, we have the following:

\begin{prop}
\label{prop:hsigma-ops}
    The pair of involutions $(\hsigma_0, \hsigma_1)$ satisfies
    \begin{gather*}
        m \circ (\hsigma_1 \otimes \hsigma_1) = \hsigma_1 \circ m, \quad
        \iota \circ \hsigma_0 = \hsigma_1 \circ \iota, \\
        \Delta \circ \hsigma_1 = (\hsigma_1 \otimes \hsigma_1) \circ \Delta,\quad 
        \epsilon \circ \hsigma_1 = \hsigma_0 \circ \epsilon.
    \end{gather*}
\end{prop}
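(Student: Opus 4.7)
The plan is to deduce each of the four identities from the corresponding identities for $\sigma$ in \Cref{prop:sigma-ops} by carefully tracking how the degree-dependent sign $(-1)^{\deg(\cdot)/2}$ behaves under the structure maps. The key observation is that, in the grading convention of the excerpt, $m$ and $\iota$ have degree $0$, while $\Delta$ has degree $+2$ and $\epsilon$ has degree $-2$. Each shift by $\pm 2$ in degree produces a factor of $-1$ in the sign $(-1)^{\deg(\cdot)/2}$, and this factor is exactly what is needed to cancel the minus signs appearing in the $\Delta$ and $\epsilon$ relations of \Cref{prop:sigma-ops}.

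First I would verify the multiplication identity. For homogeneous $x, y \in A_{h,t}$, \Cref{prop:sigma-ops} gives $m(\sigma(x)\otimes\sigma(y)) = \sigma(m(x\otimes y))$, so
\[
    m(\hsigma_1(x)\otimes \hsigma_1(y)) = (-1)^{(\deg x+\deg y)/2}\, \sigma(m(x\otimes y)) = \hsigma_1(m(x\otimes y)),
\]
where the last equality uses $\deg m(x\otimes y) = \deg x + \deg y$. The unit identity $\iota \circ \hsigma_0 = \hsigma_1 \circ \iota$ is even simpler: both sides equal $(-1)^{\deg r/2}\iota(r)$ for $r \in R_{h,t}$, since $\sigma \circ \iota = \iota$ and $\iota$ preserves degree.

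For the comultiplication, \Cref{prop:sigma-ops} gives $\Delta(\sigma(x)) = -(\sigma\otimes\sigma)(\Delta(x))$, so
\[
    \Delta(\hsigma_1(x)) = (-1)^{\deg x/2}\Delta(\sigma(x)) = -(-1)^{\deg x/2}(\sigma\otimes\sigma)(\Delta(x)).
\]
Each tensor factor on the right lies in bidegree summing to $\deg x+2$, hence
\[
    (\hsigma_1\otimes\hsigma_1)(\Delta(x)) = (-1)^{(\deg x+2)/2}(\sigma\otimes\sigma)(\Delta(x)) = -(-1)^{\deg x/2}(\sigma\otimes\sigma)(\Delta(x)),
\]
matching the previous expression. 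The counit relation is analogous: the extra minus sign in $\epsilon\circ\sigma = -\epsilon$ is absorbed by the shift $\deg\epsilon(x) = \deg x - 2$, which gives $(-1)^{\deg\epsilon(x)/2} = -(-1)^{\deg x/2}$.

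Strictly speaking, there is no genuine obstacle here; the entire argument is a bookkeeping check. The only point requiring care is to match the grading conventions precisely — namely $\deg h = 2$, $\deg t = 4$, $\deg X = 2$, so that $\Delta$ and $\epsilon$ indeed have degrees $+2$ and $-2$ — and to note that extending $\hsigma_0, \hsigma_1$ by linearity from homogeneous elements is unambiguous because the grading is concentrated in even degrees.
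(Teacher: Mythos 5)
Your proposal is correct and follows the same approach as the paper, which simply cites \Cref{prop:sigma-ops} together with the observation that $\deg(m)=\deg(\iota)=0$, $\deg(\Delta)=2$, $\deg(\epsilon)=-2$; you have merely written out the bookkeeping the paper leaves implicit.
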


\begin{proof}
    Immediate from \Cref{prop:sigma-ops} and the degrees of the operations
    \[
        \deg(m) = \deg(\iota) = 0, \quad \deg(\Delta) = 2, \quad \deg(\epsilon) = -2. \qedhere
    \]
\end{proof}

Consider the subring $R'_{h, t} := \Z[h^2, t]$ of $R_{h, t}$, which is supported in degrees $0 \bmod{4}$. $R_{h, t}$ is a free graded $R'_{h, t}$-module of rank $2$, generated by $1$ and $h$. The involution $\hsigma_0$ on $R_{h, t}$ is an $R'_{h, t}$-module endomorphism, represented by the matrix 
\[
    \begin{pmatrix}
        1 & \\
        & -1 
    \end{pmatrix}.
\]
Similarly, $A_{h, t}$ is a rank $4$ free $R'_{h, t}$-module, generated by $1, h, X, hX$, and the involution $\hsigma_1$ on $A_{h, t}$ is an $R'_{h, t}$-module endomorphism represented by
\[
    \begin{pmatrix}
        1 & & & h^2\\
        & -1 & -1 \\
        & & 1 \\
        & & & -1
    \end{pmatrix}.
\]
With $U := 2X - h$, one can see that $\hsigma$ restricts to $\id$ on the $R'_{h, t}$-submodule $R'_{h, t}\langle1, U\rangle  \subset A_{h, t}$ and to $-\id$ on $R'_{h, t}\langle h, hU\rangle \subset A_{h, t}$. Multiplication by $h$ is a non-invertible map between these two submodules. If we adjoin $2^{-1}$ to the rings (while denoting them by the same symbols), then $\{1, U, h, hU\}$ form a basis for $A_{h, t}$ over $R'_{h, t}$, giving it an eigendecomposition into $(+1)$-eigenspace $R'_{h, t}\langle1, U\rangle$ and $(-1)$-eigenspace $R'_{h, t}\langle h, hU\rangle$.

Hereafter, we omit the subscripts from $\hsigma_0$ and $\hsigma_1$ when there is no confusion. Moreover, we extend the involution over arbitrary $r$-fold tensor product of $A_{h,t}$ $(r \geq 0)$, and denote it by the same symbol
\[
    \hsigma := \hsigma_1^{\otimes r}\colon A_{h,t}^{\otimes r} \to A_{h,t}^{\otimes r}.
\]

The involution $\hsigma$ can be interpreted as inserting a \textit{defect plane} $\Pi$ in $\R^2 \times I$ perpendicular to the time axis $I$. Suppose $S$ is a cobordism in $\R^2 \times I$ that intersects $\Pi$ transversely. The equation $\hsigma(X) = Y$ can be interpreted as follows: if a dot $\bdot$ on $S$ passes $\Pi$, then it turns into $\hdot$.
\begin{center}
    \tikzset{every picture/.style={line width=0.75pt}} 

\begin{tikzpicture}[x=0.75pt,y=0.75pt,yscale=-.6,xscale=.6]

\draw  [draw opacity=0][fill={rgb, 255:red, 208; green, 2; blue, 27 }  ,fill opacity=0.5 ] (71.5,130.21) -- (71.5,70) -- (130.9,132) -- (130.9,192.21) -- cycle ;
\draw  [fill={rgb, 255:red, 255; green, 255; blue, 255 }  ,fill opacity=0.5 ] (146.1,69) -- (7.5,69) -- (66.9,131) -- (205.5,131) -- cycle ;
\draw [color={rgb, 255:red, 208; green, 2; blue, 27 }  ,draw opacity=1 ][line width=1.5]    (71.5,70) -- (133.5,132) ;
\draw  [fill={rgb, 255:red, 0; green, 0; blue, 0 }  ,fill opacity=1 ] (59.54,97.86) .. controls (59.54,96.09) and (60.97,94.66) .. (62.74,94.66) .. controls (64.51,94.66) and (65.95,96.09) .. (65.95,97.86) .. controls (65.95,99.63) and (64.51,101.07) .. (62.74,101.07) .. controls (60.97,101.07) and (59.54,99.63) .. (59.54,97.86) -- cycle ;
\draw  [draw opacity=0][fill={rgb, 255:red, 208; green, 2; blue, 27 }  ,fill opacity=0.5 ] (314.5,129) -- (314.5,68.79) -- (373.9,130.79) -- (373.9,191) -- cycle ;
\draw  [fill={rgb, 255:red, 255; green, 255; blue, 255 }  ,fill opacity=0.5 ] (389.1,67.79) -- (250.5,67.79) -- (309.9,129.79) -- (448.5,129.79) -- cycle ;
\draw  [draw opacity=0][fill={rgb, 255:red, 208; green, 2; blue, 27 }  ,fill opacity=0.5 ] (314.5,68.79) -- (314.5,8.59) -- (373.9,70.59) -- (373.9,130.79) -- cycle ;
\draw [color={rgb, 255:red, 208; green, 2; blue, 27 }  ,draw opacity=1 ][line width=1.5]    (314.5,68.79) -- (376.5,130.79) ;
\draw  [color={rgb, 255:red, 0; green, 0; blue, 0 }  ,draw opacity=1 ][fill={rgb, 255:red, 255; green, 255; blue, 255 }  ,fill opacity=1 ][line width=0.75]  (387.5,96.66) .. controls (387.5,94.89) and (388.93,93.45) .. (390.7,93.45) .. controls (392.47,93.45) and (393.91,94.89) .. (393.91,96.66) .. controls (393.91,98.43) and (392.47,99.86) .. (390.7,99.86) .. controls (388.93,99.86) and (387.5,98.43) .. (387.5,96.66) -- cycle ;
\draw  [dash pattern={on 4.5pt off 4.5pt}]  (62.74,97.86) -- (150.5,97.86) ;
\draw [shift={(152.5,97.86)}, rotate = 180] [color={rgb, 255:red, 0; green, 0; blue, 0 }  ][line width=0.75]    (10.93,-3.29) .. controls (6.95,-1.4) and (3.31,-0.3) .. (0,0) .. controls (3.31,0.3) and (6.95,1.4) .. (10.93,3.29)   ;
\draw  [draw opacity=0][fill={rgb, 255:red, 208; green, 2; blue, 27 }  ,fill opacity=0.5 ] (71.5,70) -- (71.5,9.79) -- (130.9,71.79) -- (130.9,132) -- cycle ;

\draw (179.22,126.6) node [anchor=south] [inner sep=0.75pt]    {$S$};
\draw (74,37.4) node [anchor=north west][inner sep=0.75pt]  [color={rgb, 255:red, 208; green, 2; blue, 27 }  ,opacity=1 ]  {$\Pi $};
\draw (422.22,125.39) node [anchor=south] [inner sep=0.75pt]    {$S$};
\draw (317,36.19) node [anchor=north west][inner sep=0.75pt]  [color={rgb, 255:red, 208; green, 2; blue, 27 }  ,opacity=1 ]  {$\Pi $};
\draw (216,91.4) node [anchor=north west][inner sep=0.75pt]    {$=$};

\end{tikzpicture}
\end{center}
In particular, if a sphere with a single dot passes $\Pi$, it turns into a sphere with a hollow dot, which recovers $\hsigma(1) = 1$. If a sphere two dots passes $\Pi$, it turns into a sphere with two hollow dots, which recovers $\hsigma(h) = -h$. The equation $\hsigma(t) = t$ can be given a similar interpretation.
\begin{center}
    \tikzset{every picture/.style={line width=0.75pt}} 

\begin{tikzpicture}[x=0.75pt,y=0.75pt,yscale=-.6,xscale=.6]

\draw  [draw opacity=0][fill={rgb, 255:red, 208; green, 2; blue, 27 }  ,fill opacity=0.5 ] (71.5,130.21) -- (71.5,70) -- (130.9,132) -- (130.9,192.21) -- cycle ;
\draw  [draw opacity=0][fill={rgb, 255:red, 208; green, 2; blue, 27 }  ,fill opacity=0.5 ] (209.5,133) -- (209.5,72.79) -- (268.9,134.79) -- (268.9,195) -- cycle ;
\draw  [draw opacity=0][fill={rgb, 255:red, 208; green, 2; blue, 27 }  ,fill opacity=0.5 ] (209.5,72.79) -- (209.5,12.59) -- (268.9,74.59) -- (268.9,134.79) -- cycle ;
\draw  [dash pattern={on 4.5pt off 4.5pt}]  (53.74,102.86) -- (141.5,102.86) ;
\draw [shift={(143.5,102.86)}, rotate = 180] [color={rgb, 255:red, 0; green, 0; blue, 0 }  ][line width=0.75]    (10.93,-3.29) .. controls (6.95,-1.4) and (3.31,-0.3) .. (0,0) .. controls (3.31,0.3) and (6.95,1.4) .. (10.93,3.29)   ;
\draw  [draw opacity=0][fill={rgb, 255:red, 208; green, 2; blue, 27 }  ,fill opacity=0.5 ] (71.5,70) -- (71.5,9.79) -- (130.9,71.79) -- (130.9,132) -- cycle ;
\draw  [draw opacity=0] (47.51,102.83) .. controls (46.72,105.58) and (38.33,107.75) .. (28.11,107.75) .. controls (17.36,107.75) and (8.65,105.36) .. (8.65,102.41) .. controls (8.65,102.23) and (8.68,102.05) .. (8.74,101.87) -- (28.11,102.41) -- cycle ; \draw  [color={rgb, 255:red, 128; green, 128; blue, 128 }  ,draw opacity=1 ] (47.51,102.83) .. controls (46.72,105.58) and (38.33,107.75) .. (28.11,107.75) .. controls (17.36,107.75) and (8.65,105.36) .. (8.65,102.41) .. controls (8.65,102.23) and (8.68,102.05) .. (8.74,101.87) ;  
\draw  [draw opacity=0][dash pattern={on 0.84pt off 2.51pt}] (9.13,101.08) .. controls (10.68,98.56) and (18.68,96.64) .. (28.32,96.64) .. controls (38.91,96.64) and (47.53,98.96) .. (47.77,101.85) -- (28.32,101.98) -- cycle ; \draw  [color={rgb, 255:red, 128; green, 128; blue, 128 }  ,draw opacity=1 ][dash pattern={on 0.84pt off 2.51pt}] (9.13,101.08) .. controls (10.68,98.56) and (18.68,96.64) .. (28.32,96.64) .. controls (38.91,96.64) and (47.53,98.96) .. (47.77,101.85) ;  
\draw   (8.5,102.41) .. controls (8.5,91.58) and (17.28,82.8) .. (28.11,82.8) .. controls (38.94,82.8) and (47.72,91.58) .. (47.72,102.41) .. controls (47.72,113.24) and (38.94,122.01) .. (28.11,122.01) .. controls (17.28,122.01) and (8.5,113.24) .. (8.5,102.41) -- cycle ;
\draw  [fill={rgb, 255:red, 0; green, 0; blue, 0 }  ,fill opacity=1 ] (20.16,91.71) .. controls (20.16,90.32) and (21.29,89.2) .. (22.68,89.2) .. controls (24.07,89.2) and (25.19,90.32) .. (25.19,91.71) .. controls (25.19,93.1) and (24.07,94.22) .. (22.68,94.22) .. controls (21.29,94.22) and (20.16,93.1) .. (20.16,91.71) -- cycle ;
\draw  [fill={rgb, 255:red, 0; green, 0; blue, 0 }  ,fill opacity=1 ] (31.93,91.71) .. controls (31.93,90.32) and (33.06,89.2) .. (34.45,89.2) .. controls (35.83,89.2) and (36.96,90.32) .. (36.96,91.71) .. controls (36.96,93.1) and (35.83,94.22) .. (34.45,94.22) .. controls (33.06,94.22) and (31.93,93.1) .. (31.93,91.71) -- cycle ;
\draw  [draw opacity=0] (313.81,108.4) .. controls (313.07,111) and (305.16,113.04) .. (295.51,113.04) .. controls (285.37,113.04) and (277.15,110.78) .. (277.15,108) .. controls (277.15,107.83) and (277.18,107.66) .. (277.24,107.5) -- (295.51,108) -- cycle ; \draw  [color={rgb, 255:red, 128; green, 128; blue, 128 }  ,draw opacity=1 ] (313.81,108.4) .. controls (313.07,111) and (305.16,113.04) .. (295.51,113.04) .. controls (285.37,113.04) and (277.15,110.78) .. (277.15,108) .. controls (277.15,107.83) and (277.18,107.66) .. (277.24,107.5) ;  
\draw  [draw opacity=0][dash pattern={on 0.84pt off 2.51pt}] (277.6,106.75) .. controls (279.07,104.37) and (286.62,102.56) .. (295.71,102.56) .. controls (305.7,102.56) and (313.83,104.75) .. (314.06,107.48) -- (295.71,107.6) -- cycle ; \draw  [color={rgb, 255:red, 128; green, 128; blue, 128 }  ,draw opacity=1 ][dash pattern={on 0.84pt off 2.51pt}] (277.6,106.75) .. controls (279.07,104.37) and (286.62,102.56) .. (295.71,102.56) .. controls (305.7,102.56) and (313.83,104.75) .. (314.06,107.48) ;  
\draw   (277.01,108) .. controls (277.01,97.78) and (285.29,89.5) .. (295.51,89.5) .. controls (305.73,89.5) and (314.01,97.78) .. (314.01,108) .. controls (314.01,118.22) and (305.73,126.5) .. (295.51,126.5) .. controls (285.29,126.5) and (277.01,118.22) .. (277.01,108) -- cycle ;
\draw  [color={rgb, 255:red, 0; green, 0; blue, 0 }  ,draw opacity=1 ][fill={rgb, 255:red, 255; green, 255; blue, 255 }  ,fill opacity=1 ] (286.91,96.7) .. controls (286.91,94.93) and (288.34,93.5) .. (290.11,93.5) .. controls (291.88,93.5) and (293.31,94.93) .. (293.31,96.7) .. controls (293.31,98.47) and (291.88,99.91) .. (290.11,99.91) .. controls (288.34,99.91) and (286.91,98.47) .. (286.91,96.7) -- cycle ;
\draw  [color={rgb, 255:red, 0; green, 0; blue, 0 }  ,draw opacity=1 ][fill={rgb, 255:red, 255; green, 255; blue, 255 }  ,fill opacity=1 ] (297.31,96.7) .. controls (297.31,94.93) and (298.74,93.5) .. (300.51,93.5) .. controls (302.28,93.5) and (303.71,94.93) .. (303.71,96.7) .. controls (303.71,98.47) and (302.28,99.91) .. (300.51,99.91) .. controls (298.74,99.91) and (297.31,98.47) .. (297.31,96.7) -- cycle ;

\draw (74,37.4) node [anchor=north west][inner sep=0.75pt]  [color={rgb, 255:red, 208; green, 2; blue, 27 }  ,opacity=1 ]  {$\Pi $};
\draw (164,91.4) node [anchor=north west][inner sep=0.75pt]    {$=$};
\draw (212,40.19) node [anchor=north west][inner sep=0.75pt]  [color={rgb, 255:red, 208; green, 2; blue, 27 }  ,opacity=1 ]  {$\Pi $};

\end{tikzpicture}
\end{center}
Each equation of \Cref{prop:hsigma-ops} can be given pictorial descriptions,
\begin{center}
    \input{tikzpictures/hsigma-frob-ops-picture}
\end{center}
where boundary circles of cobordism surfaces are shown by dashed intervals. 
Thus, instead of isotoping $S$, we may freely move $\Pi$ along the time axis without making any change to the underlying surface of $S$, while swapping the dots $\bdot$ and $\hdot$ on $S$ as $\Pi$ pass by. If two such parallel defect planes meet, they can be canceled, since $\hsigma^2=1$.
\begin{center}
    \tikzset{every picture/.style={line width=0.75pt}} 

\begin{tikzpicture}[x=0.75pt,y=0.75pt,yscale=-.6,xscale=.6]

\draw  [draw opacity=0][fill={rgb, 255:red, 208; green, 2; blue, 27 }  ,fill opacity=0.5 ] (37.5,130.21) -- (37.5,70) -- (96.9,132) -- (96.9,192.21) -- cycle ;
\draw  [fill={rgb, 255:red, 255; green, 255; blue, 255 }  ,fill opacity=0.5 ] (146.1,69) -- (7.5,69) -- (66.9,131) -- (205.5,131) -- cycle ;
\draw [color={rgb, 255:red, 208; green, 2; blue, 27 }  ,draw opacity=1 ][line width=1.5]    (37.5,70) -- (99.5,132) ;
\draw  [fill={rgb, 255:red, 255; green, 255; blue, 255 }  ,fill opacity=0.5 ] (389.1,67.79) -- (250.5,67.79) -- (309.9,129.79) -- (448.5,129.79) -- cycle ;
\draw  [draw opacity=0][fill={rgb, 255:red, 208; green, 2; blue, 27 }  ,fill opacity=0.5 ] (37.5,70) -- (37.5,9.79) -- (96.9,71.79) -- (96.9,132) -- cycle ;
\draw  [draw opacity=0][fill={rgb, 255:red, 208; green, 2; blue, 27 }  ,fill opacity=0.5 ] (107.5,131) -- (107.5,70.79) -- (166.9,132.79) -- (166.9,193) -- cycle ;
\draw [color={rgb, 255:red, 208; green, 2; blue, 27 }  ,draw opacity=1 ][line width=1.5]    (107.5,70.79) -- (169.5,132.79) ;
\draw  [draw opacity=0][fill={rgb, 255:red, 208; green, 2; blue, 27 }  ,fill opacity=0.5 ] (107.5,70.79) -- (107.5,10.59) -- (166.9,72.59) -- (166.9,132.79) -- cycle ;

\draw (179.22,126.6) node [anchor=south] [inner sep=0.75pt]    {$S$};
\draw (422.22,125.39) node [anchor=south] [inner sep=0.75pt]    {$S$};
\draw (216,91.4) node [anchor=north west][inner sep=0.75pt]    {$=$};

\end{tikzpicture}
\end{center}
Using (NC), any intersecting circle of $\Pi$ and $S$ can be resolved as follows 
\begin{center}
    \input{tikzpictures/defect-neck-cut}
\end{center}
More generally, one may consider a \textit{defect surface} $\Sigma$, which is an oriented (possibly disconnected) surface embedded in the interior of $\R^2 \times I$. A closed component of $\Sigma$ can be shrunk to a point and be removed.

Formally, we define an \textit{involutive Frobenius extension} to be an Frobenius extension $(R, A)$ equipped with a pair of ring involutions $\hsigma_0$ on $R$ and $\hsigma_1$ on $A$ satisfying the equations of \Cref{prop:hsigma-ops}. \textit{Homomorphisms} and \textit{isomorphisms} of involutive Frobenius extensions are those of Frobenius extensions that also commute with the involutions. 

\begin{remark}
More generally, given a commutative Frobenius extension $(R, A)$ and an automorphism $\psi$ of the pair $(R, A)$ preserving Frobenius structure, one can introduce $\psi$-hyperplanes into 2D cobordisms. It is then convenient to assume that 1-manifolds live in $\R^n$ for $n\ge 4$ and 2-cobordisms live in $\R^n\times [0,1]$, to avoid possible knottedness of 1-manifolds and their cobordisms and represent hyperplanes as $\R^n\times \{y\}$ for $0<y<1$. Alternatively, one can consider the case $n=1$ and work with 1-manifolds embedded in the plane and cobordisms between them in $\R^2\times [0,1]$. In the latter case, however, there should exist more complicated TQFTs for this cobordism category, where one takes into account how circles are nested in the plane, and likewise for cobordisms.  
\end{remark}
 
Now, define an involution $\hsigma$ on $\CKh_{h, t}(D)$ by 
\[
    \hsigma\colon \CKh_{h, t}(D) \to \CKh_{h, t}(D);\quad
    x \mapsto (\hsigma_1 \otimes \cdots \otimes \hsigma_1)(x), \ \ \mathrm{for} \ \ x\in A_{h,t}^{\otimes k}\subset \CKh_{h, t}(D). 
\]
Again, note that $\hsigma$ of the complex $\CKh_{h, t}(D)$ is not an $R_{h, t}$-module involution, but is an $R'_{h, t}$-module involution. 

\begin{prop}
\label{prop:cobordism-map-and-hsigma-commute}
    Let $S\colon D \to D'$ be a link cobordism represented as a movie of link diagrams, and $\phi_S$ denote the corresponding cobordism map on $\CKh_{h, t}$. Then the following diagram commutes:
    \[
    \begin{tikzcd}
        \CKh_{h, t}(D) 
            \arrow{r}{\phi_S} 
            \arrow{d}{\hsigma} & 
        \CKh_{h, t}(D') 
            \arrow{d}{\hsigma} \\
        \CKh_{h, t}(D) 
            \arrow{r}{\phi_S} & 
        \CKh_{h, t}(D').
    \end{tikzcd}    
    \]
\end{prop}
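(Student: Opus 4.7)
The plan is to reduce the claim to \Cref{prop:hsigma-ops} by decomposing $\phi_S$ into elementary pieces, in direct parallel with the proof for $\sigma$ but without any twist. I would represent $S$ by a movie, i.e.\ a finite sequence of Reidemeister moves and Morse modifications (births, deaths, saddles) connecting $D$ to $D'$, so that $\phi_S$ is, by construction, the composition of the chain maps associated to each frame, with each such chain map assembled from the Frobenius operations $\iota$, $\epsilon$, $m$, $\Delta$ applied to the tensor factors indexed by the circles of the relevant resolutions.

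For a single Morse modification at a fixed resolution, a birth contributes $\iota$, a death contributes $\epsilon$, a merging saddle contributes $m$, and a splitting saddle contributes $\Delta$. Each acts on the tensor factors supported at the local cobordism while the remaining factors carry the identity, which trivially commutes with $\hsigma_1$. The four identities of \Cref{prop:hsigma-ops} say exactly that these operations intertwine $\hsigma_0$ on $R_{h,t}$ and $\hsigma_1$ on $A_{h,t}$, so the corresponding local square commutes. For the Reidemeister chain maps in Bar-Natan's formulation, the same analysis applies, since these are built from precisely the same elementary TQFT pieces applied componentwise across the two resolutions involved; hence each Reidemeister chain map also commutes with $\hsigma$. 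Composing across all frames of the movie produces the square in the statement.

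The only subtlety worth flagging, and the main point to keep track of, is that $\hsigma$ is not $R_{h,t}$-linear but only $R'_{h,t}$-linear (as explained in \Cref{subsec:hsigma}), so what we obtain is a commuting square of $R'_{h,t}$-module maps rather than of $R_{h,t}$-module maps. This poses no obstacle, because \Cref{prop:hsigma-ops} consists of equalities of $\Z$-module maps, which is all that is used in the elementwise verification. Pictorially, the argument amounts to inserting a defect plane $\Pi$ transverse to the time direction of $S$ and sliding it across each elementary piece, using the pictorial form of \Cref{prop:hsigma-ops} to swap the labels $\bdot$ and $\hdot$ without changing the underlying surface; this is the same sliding principle already used informally in \Cref{subsec:hsigma}, and it makes the commutativity manifest frame by frame.
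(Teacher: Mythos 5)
Your proposal is correct and takes essentially the same route as the paper: both hinge on the fact that, in Bar-Natan's formulation, the Reidemeister and Morse chain maps are given by \emph{undotted} cobordisms, so they factor through $\iota$, $\epsilon$, $m$, $\Delta$, each of which intertwines $\hsigma$ by \Cref{prop:hsigma-ops}; the defect-plane picture you mention at the end is exactly how the paper phrases it. The only thing worth making explicit in your write-up is the undottedness itself (the paper cites Figures 5, 6, 9 and Section 8.1 of \cite{BarNatan:2005}), since a dotted piece such as $X\cdot$ would \emph{not} commute with $\hsigma$, and the claim that the Reidemeister cobordisms decompose into the elementary Frobenius operations quietly relies on it.
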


\begin{proof}
    As described in \cite{BarNatan:2005}, the Reidemeister-move maps (Figures 5, 6, 9 therein) and Morse-move maps (in Section 8.1) are all described by undotted cobordisms. Thus the commutativity is immediate from the fact that a defect plane can pass through any undotted cobordisms without changing them.
\end{proof}

The following proposition is also immediate from the definition of $\hsigma$. 

\begin{prop}
    For link diagrams $D, D'$, let $D \sqcup D'$ denote their  disjoint union. The involution $\hsigma$ commutes with the canonical isomorphism computing the chain complex for the disjoint union: 
    \[
\begin{tikzcd}
\CKh_{h, t}(D) \otimes \CKh_{h, t}(D') \arrow[r, "\cong"]  \arrow[d, "\hsigma"'] & \CKh_{h, t}(D \sqcup D') \arrow[d, "\hsigma"] \\
\CKh_{h, t}(D) \otimes \CKh_{h, t}(D') \arrow[r, "\cong"] & \CKh_{h, t}(D \sqcup D').
\end{tikzcd}
    \]
\end{prop}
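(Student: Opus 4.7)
The plan is to reduce the statement to a vertex-wise check on the cubes of resolutions. For a link diagram $D$, the complex $\CKh_{h,t}(D)$ is built as a direct sum over vertices $v$ of the resolution cube, where the summand at $v$ is $A_{h,t}^{\otimes a_v}$ with $a_v$ the number of circles in the corresponding resolution. Similarly for $D'$, with circle counts $a_{v'}$. The resolution cube of $D \sqcup D'$ is naturally indexed by pairs $(v, v')$, and the circle count is $a_v + a_{v'}$.

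First I would spell out the canonical isomorphism: at each pair $(v, v')$ it is just the identity-on-tensor-factors map
\[
    A_{h,t}^{\otimes a_v} \otimes A_{h,t}^{\otimes a_{v'}} \xrightarrow{\;\cong\;} A_{h,t}^{\otimes (a_v + a_{v'})},
\]
and the differential of $\CKh_{h,t}(D\sqcup D')$ factors through the Leibniz rule $d_{D\sqcup D'} = d_D \otimes \id + (-1)^{\cdot} \id \otimes d_{D'}$ under this identification. Then by definition of $\hsigma$ as $\hsigma_1^{\otimes k}$ on each tensor power of $A_{h,t}$, the left vertical map acts on the summand at $(v,v')$ as $\hsigma_1^{\otimes a_v}\otimes \hsigma_1^{\otimes a_{v'}}$, while the right vertical map acts as $\hsigma_1^{\otimes (a_v + a_{v'})}$.

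These two operators agree under the canonical reassociation of tensor factors, so the square commutes at every vertex; since both horizontal maps are chain isomorphisms built vertex-wise, the diagram commutes at the level of chain complexes. The proof is therefore immediate from unpacking the definition of $\hsigma$; there is no genuine obstacle, and in particular one does not need to invoke \Cref{prop:hsigma-ops}, since no Frobenius algebra operation is being applied — only the bare tensor identification between resolutions of $D\sqcup D'$ and product resolutions of $D$ and $D'$.
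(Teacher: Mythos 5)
Your proposal is correct and matches the paper's intent exactly: the paper offers no formal proof, stating only that the proposition is ``immediate from the definition of $\hsigma$,'' and your vertex-wise unpacking---observing that the canonical isomorphism is a bare re-association of tensor factors and that $\hsigma$, being $\hsigma_1^{\otimes k}$, is preserved under that re-association---is precisely the reason. Your remark that \Cref{prop:hsigma-ops} is not needed here is also right, since no Frobenius operation is applied.
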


\subsection{Duality}

Consider the non-degenerate pairing on $A_{h, t}$
\[
    \beta := \epsilon \circ m\colon A_{h, t} \otimes A_{h, t} \to R_{h, t}.
\]
Let $\dual$ denote the associated isomorphism
\[
    \dual\colon A_{h, t} \to A^*_{h, t}
\]
where $A^*_{h, t} := \Hom_{R_{h, t}}(A_{h, t}, R_{h, t})$ is the dual $R_{h, t}$-module of $A_{h, t}$. We call $\{\dual(1), \dual(X)\}$ the \textit{standard basis} of $A^*_{h, t}$. Its elements have a cobordism description:
\begin{center}
    \input{tikzpictures/duality-map}
\end{center}
With $Y = X - h$, one can see that $\{1, X\}$ and $\{Y, 1\}$ are mutually dual with respect to the non-degenerate pairing $\beta$. Thus the standard basis $\{\dual(1), \dual(X)\}$ for $ A^*_{h, t}$ is precisely the (algebraic) dual basis of $\{Y, 1\}$ for $A_{h, t}$. 

From \Cref{prop:hsigma-ops}, we have
\[
    \beta \circ (\hsigma_1 \otimes \hsigma_1) = \hsigma_0 \circ \beta 
\]
which gives, for any $x \in A_{h,t}$, 
\[
    \dual(\hsigma_1(x)) \circ \hsigma_1 = \hsigma_0 \circ \dual(x).
\]
This can be visualized as follows:
\begin{center}
    \tikzset{every picture/.style={line width=0.75pt}} 

\begin{tikzpicture}[x=0.75pt,y=0.75pt,yscale=-1,xscale=1]

\draw    (53.43,62.46) -- (17.03,62.46) ;
\draw    (53.43,80.87) -- (17.03,80.87) ;

\draw  [dash pattern={on 0.84pt off 2.51pt}]  (17.03,62.46) -- (17.03,80.87) ;
\draw  [draw opacity=0] (54.71,62.12) .. controls (60.15,61.92) and (64.5,57.12) .. (64.5,51.23) .. controls (64.5,45.21) and (59.96,40.33) .. (54.36,40.33) -- (54.36,51.23) -- cycle ; \draw   (54.71,62.12) .. controls (60.15,61.92) and (64.5,57.12) .. (64.5,51.23) .. controls (64.5,45.21) and (59.96,40.33) .. (54.36,40.33) ;  
\draw    (53.43,22.06) .. controls (71.88,22.39) and (81.68,35.23) .. (81.67,49.95) .. controls (81.67,64.68) and (75.69,80.22) .. (53.43,80.87) ;

\draw [color={rgb, 255:red, 208; green, 2; blue, 27 }  ,draw opacity=1 ]   (48.86,9.06) -- (48.5,91.06) ;
\draw    (54.43,21.96) -- (38.25,21.96) ;
\draw    (54.43,40.37) -- (38.25,40.37) ;

\draw   (21.25,19.56) -- (38.5,19.56) -- (38.5,42.56) -- (21.25,42.56) -- cycle ;
\draw    (154.93,61.59) -- (118.53,61.59) ;
\draw    (154.93,80) -- (118.53,80) ;

\draw  [dash pattern={on 0.84pt off 2.51pt}]  (118.53,61.59) -- (118.53,80) ;
\draw  [draw opacity=0] (156.21,61.24) .. controls (161.65,61.04) and (166,56.24) .. (166,50.35) .. controls (166,44.33) and (161.46,39.45) .. (155.86,39.45) -- (155.86,50.35) -- cycle ; \draw   (156.21,61.24) .. controls (161.65,61.04) and (166,56.24) .. (166,50.35) .. controls (166,44.33) and (161.46,39.45) .. (155.86,39.45) ;  
\draw    (154.93,21.19) .. controls (173.38,21.51) and (183.18,34.35) .. (183.17,49.08) .. controls (183.17,63.8) and (177.19,79.35) .. (154.93,80) ;

\draw [color={rgb, 255:red, 208; green, 2; blue, 27 }  ,draw opacity=1 ]   (191.36,8.69) -- (191,90.69) ;
\draw    (155.93,21.09) -- (139.75,21.09) ;
\draw    (155.93,39.5) -- (139.75,39.5) ;

\draw   (122.75,18.69) -- (140,18.69) -- (140,41.69) -- (122.75,41.69) -- cycle ;

\draw (29.88,31.06) node    {$x$};
\draw (95.5,42.09) node [anchor=north west][inner sep=0.75pt]    {$=$};
\draw (131.38,30.19) node    {$x$};

\end{tikzpicture}
\end{center}

We define an involution $\hsigma_\dual$ on $A_{h,t}^*$ by
\[
    \hsigma_\dual\colon A_{h,t}^* \to A_{h,t}^*,\quad 
    f \mapsto \hsigma_0 \circ f \circ \hsigma_1.
\]
Then we have the following commutative diagram:
\[
    \begin{tikzcd}[row sep=3em, column sep=3em]
        A_{h,t} \arrow[r, "\dual"] \arrow[d, "\hsigma_1"'] 
        & A_{h,t}^* \arrow[d, "\hsigma_\dual"] \\
        A_{h,t} \arrow[r, "\dual"] & A_{h,t}^*.
    \end{tikzcd}
\]
Explicitly, the standard bases of $A$ and $A^*$ correspond as follows:
\begin{center}
    \input{tikzpictures/dual-correspondence}
\end{center}

Recall that $A_{h,t}^*$ admits a \textit{dual Frobenius algebra structure} with multiplication $\Delta^*$, unit $\epsilon^*$, comultiplication $m^*$, and counit $\iota^*$. 
One can see that behavior of the operations of $A_{h,t}^*$ with respect to the basis $\{\dual(1), \dual(X) \}$ is exactly that of $A_{h,t}$ with respect to the basis $\{1, X\}$ turned around. 
Thus $\dual$ is an isomorphism of Frobenius algebras. 
Furthermore, the following proposition states that the pair $(\hsigma_0, \hsigma_\dual)$ makes $(R_{h, t}, A_{h,t}^*)$ an involutive Frobenius extension, and that $\dual$ is an isomorphism of involutive Frobenius extensions.

\begin{prop}
\label{prop:hat-sigma-and-dual-ops}
    The involution $\hsigma_\dual$ on $A_{h,t}^*$ satisfies
    \begin{gather*}
        \Delta^* \circ (\hsigma_\dual \otimes \hsigma_\dual) = \hsigma_\dual \circ \Delta^*, \quad
        \epsilon^* \circ \hsigma_0 = \hsigma_\dual \circ \epsilon^*, \\
        m^* \circ \hsigma_\dual = (\hsigma_\dual \otimes \hsigma_\dual) \circ m^*,\quad 
        \iota^* \circ \hsigma_\dual = \hsigma_0 \circ \iota^*.
    \end{gather*}
\end{prop}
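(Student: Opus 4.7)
The plan is to deduce all four equations uniformly by transporting the identities of \Cref{prop:hsigma-ops} across the Frobenius algebra isomorphism $\dual \colon A_{h, t} \to A^*_{h, t}$ established in the preceding paragraph. The two inputs I would use are, first, that $\dual$ intertwines the Frobenius operations, giving
\begin{align*}
    \dual \circ m &= \Delta^* \circ (\dual \otimes \dual), & \dual \circ \iota &= \epsilon^*, \\
    m^* \circ \dual &= (\dual \otimes \dual) \circ \Delta, & \iota^* \circ \dual &= \epsilon,
\end{align*}
and, second, the intertwining relation $\hsigma_\dual \circ \dual = \dual \circ \hsigma_1$ recorded in the commutative square just above the statement.

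Each of the four identities then reduces to a short diagram chase. For instance, for the first, I would compute
\begin{align*}
    \Delta^* \circ (\hsigma_\dual \otimes \hsigma_\dual) \circ (\dual \otimes \dual)
    &= \Delta^* \circ (\dual \otimes \dual) \circ (\hsigma_1 \otimes \hsigma_1) \\
    &= \dual \circ m \circ (\hsigma_1 \otimes \hsigma_1) \\
    &= \dual \circ \hsigma_1 \circ m \\
    &= \hsigma_\dual \circ \dual \circ m \\
    &= \hsigma_\dual \circ \Delta^* \circ (\dual \otimes \dual),
\end{align*}
invoking the intertwining relation twice, multiplication compatibility of $\dual$ twice, and the first identity of \Cref{prop:hsigma-ops}; one then cancels the isomorphism $\dual \otimes \dual$ on the right. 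The remaining three identities follow from strictly analogous chases, pairing each equation of \Cref{prop:hsigma-ops} with the matching compatibility of $\dual$ (unit with counit for the $\epsilon^*$ statement, comultiplication with multiplication for the $m^*$ statement, and counit with unit for the $\iota^*$ statement).

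The only subtlety to keep in mind is that $\hsigma_1$ is $\hsigma_0$-semilinear rather than $R_{h, t}$-linear, which is precisely why the definition $\hsigma_\dual(f) = \hsigma_0 \circ f \circ \hsigma_1$ absorbs an $\hsigma_0$ on the ring side. I expect this to be the main point requiring care, namely checking that the pairing $\beta$ and the induced isomorphism $\dual$ interact correctly with the semilinear involutions, but this has essentially been verified in the identity $\dual(\hsigma_1(x)) \circ \hsigma_1 = \hsigma_0 \circ \dual(x)$ established just above. Once that observation is in hand, the proof is purely formal.
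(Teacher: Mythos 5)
Your proof is correct and is essentially the paper's proof unwound: the paper packages the same argument as a cube of commutative squares and observes that the commutativity of the right face follows from the commutativity of the other five (using that $\dual$ and $\dual\otimes\dual$ are isomorphisms), whereas you trace the cube around as an equation chase. The inputs are identical — the Frobenius-isomorphism compatibilities of $\dual$, the intertwining $\hsigma_\dual \circ \dual = \dual \circ \hsigma_1$, and \Cref{prop:hsigma-ops} — and the cancellation of $\dual\otimes\dual$ at the end is precisely the "other faces imply right face" step.
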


\begin{proof}
    For the first equation, consider the following cubical diagram, where $A:=A_{h,t}$ for brevity:
    \[
\begin{tikzcd} 
& A \otimes A \arrow[ld, "\hsigma \otimes \hsigma" description] \arrow[dd, "m" description, pos=.75] \arrow[rr, "\dual" description] & & A^* \otimes A^* \arrow[dd, "\Delta^*" description] \arrow[ld, "\hsigma_\dual \otimes \hsigma_\dual" description] \\
A \otimes A \arrow[dd, "m" description] \arrow[rr, "\dual" description, pos=.75] & & A^* \otimes A^* \arrow[dd, "\Delta^*" description, pos=.75] & \\ 
& A \arrow[ld, "\hsigma" description] \arrow[rr, "\dual" description, pos=.75] & & A^* \arrow[ld, "\hsigma_\dual" description] \\
A \arrow[rr, "\dual" description] & & A^* & 
\end{tikzcd}        
    \]
    The commutativity of the right face follows from the commutativity of the other faces. The proof for the other cases are similar. 
\end{proof}

For a link diagram $D$, let $\CKh_{h, t}(D)^*$ denote the algebraic dual of $\CKh_{h, t}(D)$. The involution $\hsigma_\dual$ is extended to an involution on $\CKh_{h, t}(D)^*$ by 
\[
    \hsigma_\dual \colon \CKh_{h, t}(D)^* \to \CKh_{h, t}(D)^*;\quad
    x \mapsto (\hsigma_\dual \otimes \cdots \otimes \hsigma_\dual)(x), \ \ \mathrm{for} \ \ x\in (A_{h,t}^*)^{\otimes k}\subset \CKh_{h, t}(D)^*.
\]

\begin{prop}
    For a link diagram $D$, let $D^*$ denote the mirror of $D$. There is a canonical chain isomorphism
    \[
        \dual\colon \CKh_{h, t}(D^*) \cong \CKh_{h, t}(D)^*
    \]
    which commutes with the respective involutions:
    \[
\begin{tikzcd}
\CKh_{h, t}(D^*) \arrow[d, "\hsigma"'] \arrow[r, "\sim"] & \CKh_{h, t}(D)^* \arrow[d, "\hsigma_\dual"] \\
\CKh_{h, t}(D^*) \arrow[r, "\sim"] & \CKh_{h, t}(D)^*.           
\end{tikzcd}
    \]
\end{prop}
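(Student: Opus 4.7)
The plan is to construct $\dual$ vertex-by-vertex on the cube of resolutions and then reduce the commutativity of the involutions to the already-established single-circle case. Mirroring $D$ swaps positive and negative crossings, so the $0$- and $1$-resolutions at each crossing are interchanged. Concretely, for $v \in \{0,1\}^n$, the resolution of $D^*$ at vertex $v$ equals (as a 1-manifold in $\R^2$) the resolution of $D$ at the complementary vertex $\bar v := (1,\ldots,1) - v$. I therefore define $\dual$ vertex-by-vertex as
\[
    \dual_v := \dual^{\otimes k_v} \colon A_{h,t}^{\otimes k_v} \longrightarrow (A_{h,t}^*)^{\otimes k_v},
\]
where $k_v$ is the number of circles in the common resolution, and place $\dual_v$ so that the summand at $v$ of $\CKh_{h,t}(D^*)$ is identified with the summand at $\bar v$ of $\CKh_{h,t}(D)^*$.

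To see that this is a chain map, I would invoke the standard fact that $\dual\colon A_{h,t} \to A_{h,t}^*$ is an isomorphism of Frobenius algebras, so it exchanges
\[
    m \longleftrightarrow \Delta^*, \quad \iota \longleftrightarrow \epsilon^*, \quad \Delta \longleftrightarrow m^*, \quad \epsilon \longleftrightarrow \iota^*.
\]
An edge map in the cube of $D^*$ from vertex $v$ to $v + e_i$ uses $m$ or $\Delta$ (a $0 \to 1$ transition at the $i$-th crossing of $D^*$), while the corresponding edge in the cube of $D$ goes from $\bar v - e_i$ to $\bar v$ and uses $\Delta$ or $m$ respectively (a $0 \to 1$ transition of $D$ corresponds to $1\to 0$ on the mirror side). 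Upon dualizing, the latter edge becomes $m^*$ or $\Delta^*$, and the Frobenius duality intertwines these with the former. Signs from the cube construction match because the edge assignments in $\CKh(D^*)$ and in the dual of $\CKh(D)$ differ by the involution $v \mapsto \bar v$ of the cube, which is a standard fact underlying the mirror duality in Khovanov homology.

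For the compatibility with involutions, the commutative square
\[
    \begin{tikzcd}
        A_{h,t} \arrow[r, "\dual"] \arrow[d, "\hsigma"']
        & A_{h,t}^* \arrow[d, "\hsigma_\dual"] \\
        A_{h,t} \arrow[r, "\dual"] & A_{h,t}^*
    \end{tikzcd}
\]
established in the previous subsection is preserved under taking tensor powers, since both $\hsigma$ and $\hsigma_\dual$ act diagonally on $A_{h,t}^{\otimes k}$ and $(A_{h,t}^*)^{\otimes k}$ by definition. Hence at each vertex of the cube, the square
\[
    \begin{tikzcd}
        A_{h,t}^{\otimes k_v} \arrow[r, "\dual_v"] \arrow[d, "\hsigma"']
        & (A_{h,t}^*)^{\otimes k_v} \arrow[d, "\hsigma_\dual"] \\
        A_{h,t}^{\otimes k_v} \arrow[r, "\dual_v"] & (A_{h,t}^*)^{\otimes k_v}
    \end{tikzcd}
\]
commutes. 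Summing over all vertices yields the desired commutativity on the whole chain complex.

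I do not anticipate any real obstacle: the chain-level duality is the classical Khovanov mirror isomorphism, and the compatibility with $\hsigma$ is purely formal once one observes that both involutions act vertex-wise by tensor powers of the single-circle involutions. The only point requiring a small amount of care is the sign/grading bookkeeping needed to identify $\CKh_{h,t}(D^*)$ with the dual complex; this is a routine verification that I would relegate to the reader by citing the standard treatment.
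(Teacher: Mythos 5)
Your proof follows essentially the same route as the paper's: identify the cube of $D^*$ with the reversed cube of $D$ via $v \mapsto \bar v$, define $\dual$ vertex-wise as $\dual^{\otimes k_v}$, use the Frobenius-algebra-isomorphism property of $\dual$ (which the paper establishes just before the proposition) to handle the edge maps, and reduce the involution compatibility to the single-circle square by taking tensor powers. You spell out a few steps the paper compresses into a sentence, but there is no substantive difference in strategy.
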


\begin{proof}
    The cube of resolutions for $D^*$ can be obtained from that for $D$ by replacing each vertex $v$ with $\bar{v}$ where $\bar{v}_i = 1 - v_i$, which gives identical resolutions $D_v = D^*_{\bar{v}}$, and reversing each edge $e_{uv}\colon D_u \rightarrow D_v$ to $\bar{e}_{uv}\colon D^*_{\bar{u}} \leftarrow D^*_{\bar{v}}$. From this observation, one can see that the correspondence
    \[
        \dual\colon x_1 \otimes \cdots \otimes x_r \in A_{h,t}^{\otimes r} \mapsto \dual(x_1) \otimes \cdots \otimes \dual(x_r) \in (A_{h,t}^*)^{\otimes r}
    \]
    gives a chain isomorphism $\CKh_{h, t}(D^*) \cong \CKh_{h, t}(D)^*$. That $\dual$ commutes with the involutions is immediate from the definition of $\hsigma_\dual$. 
\end{proof}

\subsection{Various Frobenius extensions}
\label{subsec:frob-ext}

Various Frobenius extensions are considered in \cite{Khovanov:2004,Khovanov:2022}. These extensions can be endowed with involutive structures that extend the one defined in \Cref{subsec:hsigma} as follows. 

\begin{enumerate}
    \item The \textit{$U(1) \times U(1)$-equivariant theory} is given by the Frobenius extension $R_{\alpha} := \Z[\alpha_1,\alpha_2]$ and $A_{\alpha} := R_{\alpha}[X]/((X-\alpha_1)(X-\alpha_2))$ with $\deg \alpha_1 = \deg \alpha_2 = 2$. The inclusion $R_{h, t} \subset R_\alpha$ is given by $h = \alpha_1 + \alpha_2$, $t = -\alpha_1\alpha_2$. One can see that $\hsigma$ naturally extends over $R_{\alpha}$ and $A_{\alpha}$ with
    \[
        \hsigma(\alpha_1) = - \alpha_1,\quad 
        \hsigma(\alpha_2) = - \alpha_2
    \]
    and 
    \[
        \hsigma(X - \alpha_1) = X - \alpha_2,\quad 
        \hsigma(X - \alpha_2) = X - \alpha_1.
    \]
    There is an additional symmetry $\sigma_\alpha$ that transposes the roots $\alpha_1, \alpha_2$ and fixes $X$, 
    \[
        \sigma_\alpha(\alpha_1) = \alpha_2,\quad 
        \sigma_\alpha(\alpha_2) = \alpha_1,\quad
        \sigma_\alpha(X) = X. 
    \]
    


    \item The \textit{$U(1)$-equivariant theory} is given by $R_h := \Z[h]$ and $A_h := R_h[X]/(X^2 - hX)$ with $\deg(h) = 2$. There is an obvious mapping $(R_{h, t}, A_{h, t}) \to (R_h, A_h)$ by setting $t = 0$. This theory was originally introduced by Bar-Natan~\cite{BarNatan:2005} over $\mathbb{F}_2$, where $H$ is used instead of $h$.

    \item The \textit{$SU(2)$-equivariant theory}  is given by $R_t := \Z[t]$ and $A_t := R_t[X]/(X^2 - t)$ with $\deg(t) = 4$. Here, the gradings are $0 \pmod{4}$, so we define $\hsigma_{R_t} = \id_{R_t}$ and $\hsigma_{A_t} = \id_{A_t}$. It is also called Lee's theory~\cite{Lee:2005}, together with its version given by localizing $R_t$ to $\Z[t,t^{-1}]$ and likewise for $A_t$. One can also consider a rank $2$ extension $R_{\sqrt{t}} := \Z[\sqrt{t}]$ and $A_{\sqrt{t}} := R_{\sqrt{t}}[X]/(X^2 - t)$ of $(R_t, A_t)$, which will be revisited in \Cref{sec:rasmussen}. 

    \item The original (non-equivariant) theory of \cite{Khovanov:2000} (with $c = 0$) is given by $R_0 = \Z$ and $A_0 = R_0[X]/(X^2)$. The involutions are given by the identity maps. 
\end{enumerate}

The diagram of \Cref{fig:frob-exts} depicts the relation between these involutive Frobenius extensions, where arrows are involutive homomorphisms given by base changes indicated by the labels. 

\begin{figure}[t]
    \centering
\[
\begin{tikzcd}[row sep=4em, column sep=4em]
& {(R_\alpha, A_\alpha)} 
    \arrow[ldd, "\substack{\alpha_1 = 0,\\ \alpha_2 = h}" description, bend right]
    \arrow[rrdd, "\substack{\alpha_1 = -\sqrt{t},\\ \alpha_2 = \sqrt{t}}" description, bend left] & \\ 
& {(R_{h, t}, A_{h, t})} 
    \arrow[u, "\substack{h = \alpha_1 + \alpha_2,\\ t = -\alpha_1\alpha_2}" description, hook] 
    \arrow[ld, "t = 0" description] 
    \arrow[rd, "h = 0" description] & \\
{(R_h, A_h)} 
    \arrow[rd, "h = 0" description] 
& & {(R_t, A_t)}
    \arrow[r, hook]
    \arrow[ld, "t = 0" description]
& {(R_{\sqrt{t}}, A_{\sqrt{t}})} 
    \arrow[lld, "\sqrt{t} = 0" description, bend left = 20] \\
& {(R_0, A_0)}     
\end{tikzcd}
\]
    \caption{Involutive Frobenius extensions}
    \label{fig:frob-exts}
\end{figure}
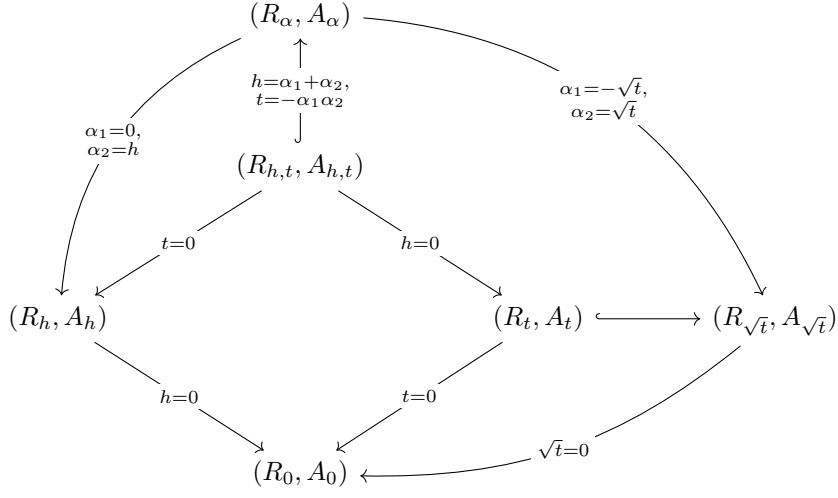
\section{Shumakovitch operation and reduced theories}

\subsection{Shumakovitch operation \texorpdfstring{$\nu$}{nu}}

In \cite{Shumakovitch:2014}, Shumakovitch introduced an operation $\nu$ on the $\F_2$-Khovanov homology $\Kh_0(-; \F_2)$, and proved that the unreduced $\F_2$-Khovanov homology $\Kh_0(-; \F_2)$ splits as the direct sum of two copies of the reduced $\F_2$-Khovanov homology $\widetilde{\Kh}_0(-; \F_2)$. In \cite{Wigderson:2016}, Wigderson extended the operation to the $\F_2$-Bar-Natan homology ($U(1)$-equivariant homology over $\F_2$) and proved that the unreduced homology splits into the direct sum of two copies of the reduced homology. Here, we briefly review the definition of $\nu$ and its extended version. 

The \textit{Shumakovitch operation} $\nu$ is defined as follows: for any $r \geq 1$ and any element $x = x_1 \otimes \cdots \otimes x_r \in A_0^{\otimes r} \otimes \F_2$ with $x_i \in \{1, X\}$, the element $\nu(x)$ is defined as the sum of all elements obtained by choosing one factor $x_i$ labeled $X$ and replacing it with $1$, 
\[
    \nu(x) := \sum_{x_i = X} x_1 \otimes \cdots \otimes 1 \otimes \cdots \otimes x_r.
\]
The operation $\nu$ can be given a visual description as follows:
\begin{center}
    \tikzset{every picture/.style={line width=0.75pt}} 

\begin{tikzpicture}[x=0.75pt,y=0.75pt,yscale=-1,xscale=1]

\draw  [color={rgb, 255:red, 0; green, 0; blue, 0 }  ,draw opacity=1 ] (113.09,25.17) .. controls (113.09,17.46) and (115.67,11.22) .. (118.85,11.22) .. controls (122.03,11.22) and (124.6,17.46) .. (124.6,25.17) .. controls (124.6,32.88) and (122.03,39.12) .. (118.85,39.12) .. controls (115.67,39.12) and (113.09,32.88) .. (113.09,25.17) -- cycle ;
\draw [color={rgb, 255:red, 0; green, 0; blue, 0 }  ,draw opacity=1 ]   (118.85,39.12) -- (178.11,39.12) ;
\draw [color={rgb, 255:red, 0; green, 0; blue, 0 }  ,draw opacity=1 ]   (118.85,11.22) -- (178.11,11.22) ;
\draw  [draw opacity=0][dash pattern={on 0.84pt off 2.51pt}] (175.82,38.91) .. controls (173.6,37.26) and (171.97,31.82) .. (171.97,25.37) .. controls (171.97,18.93) and (173.6,13.5) .. (175.81,11.84) -- (177.24,25.37) -- cycle ; \draw  [dash pattern={on 0.84pt off 2.51pt}] (175.82,38.91) .. controls (173.6,37.26) and (171.97,31.82) .. (171.97,25.37) .. controls (171.97,18.93) and (173.6,13.5) .. (175.81,11.84) ;  
\draw  [draw opacity=0] (176.67,11.4) .. controls (176.86,11.34) and (177.05,11.32) .. (177.24,11.32) .. controls (180.16,11.32) and (182.52,17.61) .. (182.52,25.37) .. controls (182.52,33.14) and (180.16,39.43) .. (177.24,39.43) .. controls (177.05,39.43) and (176.86,39.4) .. (176.67,39.35) -- (177.24,25.37) -- cycle ; \draw   (176.67,11.4) .. controls (176.86,11.34) and (177.05,11.32) .. (177.24,11.32) .. controls (180.16,11.32) and (182.52,17.61) .. (182.52,25.37) .. controls (182.52,33.14) and (180.16,39.43) .. (177.24,39.43) .. controls (177.05,39.43) and (176.86,39.4) .. (176.67,39.35) ;  

\draw  [color={rgb, 255:red, 0; green, 0; blue, 0 }  ,draw opacity=1 ] (113.09,128.17) .. controls (113.09,120.46) and (115.67,114.22) .. (118.85,114.22) .. controls (122.03,114.22) and (124.6,120.46) .. (124.6,128.17) .. controls (124.6,135.88) and (122.03,142.12) .. (118.85,142.12) .. controls (115.67,142.12) and (113.09,135.88) .. (113.09,128.17) -- cycle ;
\draw [color={rgb, 255:red, 0; green, 0; blue, 0 }  ,draw opacity=1 ]   (118.85,142.12) -- (178.11,142.12) ;
\draw [color={rgb, 255:red, 0; green, 0; blue, 0 }  ,draw opacity=1 ]   (118.85,114.22) -- (178.11,114.22) ;
\draw  [draw opacity=0][dash pattern={on 0.84pt off 2.51pt}] (175.82,141.91) .. controls (173.6,140.26) and (171.97,134.82) .. (171.97,128.37) .. controls (171.97,121.93) and (173.6,116.5) .. (175.81,114.84) -- (177.24,128.37) -- cycle ; \draw  [dash pattern={on 0.84pt off 2.51pt}] (175.82,141.91) .. controls (173.6,140.26) and (171.97,134.82) .. (171.97,128.37) .. controls (171.97,121.93) and (173.6,116.5) .. (175.81,114.84) ;  
\draw  [draw opacity=0] (176.67,114.4) .. controls (176.86,114.34) and (177.05,114.32) .. (177.24,114.32) .. controls (180.16,114.32) and (182.52,120.61) .. (182.52,128.37) .. controls (182.52,136.14) and (180.16,142.43) .. (177.24,142.43) .. controls (177.05,142.43) and (176.86,142.4) .. (176.67,142.35) -- (177.24,128.37) -- cycle ; \draw   (176.67,114.4) .. controls (176.86,114.34) and (177.05,114.32) .. (177.24,114.32) .. controls (180.16,114.32) and (182.52,120.61) .. (182.52,128.37) .. controls (182.52,136.14) and (180.16,142.43) .. (177.24,142.43) .. controls (177.05,142.43) and (176.86,142.4) .. (176.67,142.35) ;  

\draw  [draw opacity=0][dash pattern={on 0.84pt off 2.51pt}] (175.7,91.61) .. controls (173.49,89.96) and (171.86,84.52) .. (171.86,78.07) .. controls (171.86,71.63) and (173.48,66.2) .. (175.7,64.54) -- (177.13,78.07) -- cycle ; \draw  [dash pattern={on 0.84pt off 2.51pt}] (175.7,91.61) .. controls (173.49,89.96) and (171.86,84.52) .. (171.86,78.07) .. controls (171.86,71.63) and (173.48,66.2) .. (175.7,64.54) ;  
\draw  [draw opacity=0] (176.55,64.1) .. controls (176.74,64.04) and (176.93,64.02) .. (177.13,64.02) .. controls (180.04,64.02) and (182.4,70.31) .. (182.4,78.07) .. controls (182.4,85.84) and (180.04,92.13) .. (177.13,92.13) .. controls (176.93,92.13) and (176.74,92.11) .. (176.55,92.05) -- (177.13,78.07) -- cycle ; \draw   (176.55,64.1) .. controls (176.74,64.04) and (176.93,64.02) .. (177.13,64.02) .. controls (180.04,64.02) and (182.4,70.31) .. (182.4,78.07) .. controls (182.4,85.84) and (180.04,92.13) .. (177.13,92.13) .. controls (176.93,92.13) and (176.74,92.11) .. (176.55,92.05) ;  
\draw  [draw opacity=0] (177.71,64.02) .. controls (177.51,64.02) and (177.32,64.02) .. (177.13,64.02) .. controls (168,64.02) and (160.61,70.31) .. (160.61,78.07) .. controls (160.61,85.84) and (168,92.13) .. (177.13,92.13) .. controls (177.32,92.13) and (177.51,92.13) .. (177.71,92.12) -- (177.13,78.07) -- cycle ; \draw   (177.71,64.02) .. controls (177.51,64.02) and (177.32,64.02) .. (177.13,64.02) .. controls (168,64.02) and (160.61,70.31) .. (160.61,78.07) .. controls (160.61,85.84) and (168,92.13) .. (177.13,92.13) .. controls (177.32,92.13) and (177.51,92.13) .. (177.71,92.12) ;  

\draw  [color={rgb, 255:red, 0; green, 0; blue, 0 }  ,draw opacity=1 ] (113,77.69) .. controls (113,70.42) and (115.43,64.53) .. (118.43,64.53) .. controls (121.42,64.53) and (123.85,70.42) .. (123.85,77.69) .. controls (123.85,84.95) and (121.42,90.84) .. (118.43,90.84) .. controls (115.43,90.84) and (113,84.95) .. (113,77.69) -- cycle ;
\draw  [draw opacity=0] (117.88,64.44) .. controls (118.06,64.44) and (118.24,64.43) .. (118.43,64.43) .. controls (127.03,64.43) and (134,70.37) .. (134,77.69) .. controls (134,85) and (127.03,90.94) .. (118.43,90.94) .. controls (118.24,90.94) and (118.06,90.93) .. (117.88,90.93) -- (118.43,77.69) -- cycle ; \draw   (117.88,64.44) .. controls (118.06,64.44) and (118.24,64.43) .. (118.43,64.43) .. controls (127.03,64.43) and (134,70.37) .. (134,77.69) .. controls (134,85) and (127.03,90.94) .. (118.43,90.94) .. controls (118.24,90.94) and (118.06,90.93) .. (117.88,90.93) ;  

\draw   (89,10) .. controls (84.33,10) and (82,12.33) .. (82,17) -- (82,65.99) .. controls (82,72.66) and (79.67,75.99) .. (75,75.99) .. controls (79.67,75.99) and (82,79.32) .. (82,85.99)(82,82.99) -- (82,134.98) .. controls (82,139.65) and (84.33,141.98) .. (89,141.98) ;
\draw   (194,138.98) .. controls (198.67,138.98) and (201,136.65) .. (201,131.98) -- (201,83.49) .. controls (201,76.82) and (203.33,73.49) .. (208,73.49) .. controls (203.33,73.49) and (201,70.16) .. (201,63.49)(201,66.49) -- (201,14.98) .. controls (201,10.31) and (198.67,7.98) .. (194,7.98) ;

\draw (6,62.4) node [anchor=north west][inner sep=0.75pt]    {$\displaystyle \nu \ =\ \sum_{i=1}^r$};
\draw (141,43.4) node [anchor=north west][inner sep=0.75pt]    {$\vdots $};
\draw (141,91.4) node [anchor=north west][inner sep=0.75pt]    {$\vdots $};
\draw (111.09,25.17) node [anchor=east] [inner sep=0.75pt]    {$1$};
\draw (111,77.69) node [anchor=east] [inner sep=0.75pt]    {$i$};
\draw (111.09,128.17) node [anchor=east] [inner sep=0.75pt]    {$r$};

\end{tikzpicture}
\end{center}
One can prove that $\nu$ commutes with the operations of $A_0^{\otimes r} \otimes \F_2$, so that it is an Frobenius algebra endomorphism. This will be reproved later in a more generalized setting. 

Furthermore, the operation $\nu$ can be extended as a Frobenius algebra endomorphism to the $U(1)$-equivariant setting as follows: for each $1 \leq k \leq r$ and for any element $x = x_1 \otimes \cdots \otimes x_r \in A_h^{\otimes r} \otimes \F_2$ with $x_i \in \{1, X\}$, $\nu_k(x)$ is defined as the sum of all elements obtained by choosing $k$ factors labeled $X$ and replacing them with $1$. The map $\nu_k$ has degree $-2k$, and can be given a similar cobordism description as above by a sum of $\binom{r}{k}$ cobordisms, each with $k$ cups and $k$ opposite caps (instead of a single cup and cap in position $i$ in the above formula for $\nu$). Define a degree $-2$ endomorphism $\bar{\nu}$ on $A_h^{\otimes r} \otimes \F_2$ by
\[
    \bar{\nu} := \sum^r_{k = 1} h^{k - 1} \nu_k.
\]
By definition, setting $h = 0$ recovers the original Shumakovitch operation $\nu$. For convenience, we let $\nu_0 = \id$ and $\nu_k = 0$ for $k > r$. The operation $\bar{\nu}$ and the involution $\sigma = \sigma^{\otimes r}$ are related as follows:
\begin{prop}
\label{prop:nu-and-sigma}
\[
    \id + h \bar{\nu} = \sigma.
\]
\end{prop}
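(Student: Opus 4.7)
The plan is to reduce the identity to a one-tensor-factor computation. On a single copy of $A_h \otimes \F_2$, the involution $\sigma$ acts by $\sigma(1) = 1$ and $\sigma(X) = X + h$, where the sign from the original formula $\sigma(X) = X - h$ has disappeared because we are working over $\F_2$. If I introduce a single-factor operator $\nu_{(1)}$ defined by $\nu_{(1)}(1) = 0$ and $\nu_{(1)}(X) = 1$, then the above two equalities combine into the clean statement $\sigma = \id + h\,\nu_{(1)}$ on $A_h \otimes \F_2$.

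To pass to the $r$-fold tensor product, let $\nu_{(i)}$ denote $\nu_{(1)}$ acting in the $i$-th position and the identity on the other factors, so that the $\nu_{(i)}$ pairwise commute. Then
\[
    \sigma^{\otimes r} \;=\; \bigotimes_{i=1}^r \bigl(\id + h\,\nu_{(i)}\bigr) \;=\; \prod_{i=1}^r \bigl(\id + h\,\nu_{(i)}\bigr) \;=\; \sum_{S \subseteq \{1,\ldots,r\}} h^{|S|} \prod_{i \in S} \nu_{(i)}.
\]
On a basis element $x_1 \otimes \cdots \otimes x_r$ with $x_i \in \{1, X\}$, the operator $\prod_{i \in S} \nu_{(i)}$ vanishes unless $x_i = X$ for every $i \in S$, in which case it replaces each such $x_i$ by $1$. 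This is exactly the summand indexed by $S$ in the definition of $\nu_{|S|}$. Grouping subsets by their cardinality $k$ and using $\nu_k = \sum_{|S|=k} \prod_{i \in S} \nu_{(i)}$ gives
\[
    \sigma^{\otimes r} \;=\; \sum_{k=0}^{r} h^k \nu_k \;=\; \id + h \sum_{k=1}^{r} h^{k-1} \nu_k \;=\; \id + h\bar{\nu}.
\]

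There is no genuine obstacle; the proof is essentially a binomial expansion. The only point worth emphasizing is that working over $\F_2$ is essential: it is precisely the disappearance of the minus sign in $\sigma(X) = X - h$ that makes $\sigma$ factor as $\id + h\,\nu_{(1)}$ with the \emph{same} sign in every factor, so that the tensor-product expansion collects into the operator $\bar{\nu}$ without any sign obstruction.
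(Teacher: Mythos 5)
Your proof is correct, and it is essentially the same argument as the paper's: both rest on the one-factor identity $\sigma = \id + h\,\nu_{(1)}$ over $\F_2$ together with the fact that $\sigma^{\otimes r}$ (equivalently $\id + h\bar{\nu}$) factors over tensor slots. The paper packages this as an induction on $r$ by decomposing $\bar{\nu}$ across the first factor, whereas you expand $\prod_i(\id + h\,\nu_{(i)})$ directly and regroup by $|S|$ to recover $\nu_k$; the explicit subset expansion is a clean way to present the same factorization.
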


\begin{proof}
    The proof proceeds by induction on  $r$. When $r = 1$, we have
    \begin{gather*}
        1 + h \bar{\nu}(1) = 1 = \sigma(1),\\
        X + h \bar{\nu}(X) = X + h = \sigma(X).
    \end{gather*}
    Next, suppose $r > 1$ and the result holds for $r - 1$. We may write 
    \begin{align*}
        \bar{\nu}
        &= \sum_k h^{k - 1} (\bar{\nu}^{(1)}_1 \otimes \bar{\nu}^{(r-1)}_{k-1} + \id^{(1)} \otimes \bar{\nu}^{(r-1)}_k) \\
        &= \bar{\nu}^{(1)}_1 \otimes \id^{(r-1)} + (\id^{(1)} + h\bar{\nu}^{(1)}_1) \otimes \bar{\nu}^{(r-1)}.
    \end{align*}
    Here, each superscript $(k)$ indicates that it is an endomorphism on $A^{\otimes k}$. With this equation, we have 
    \begin{align*}
        \id + h \bar{\nu}
        &= \id^{(1)} \otimes \id^{(r-1)} + h \left(\bar{\nu}^{(1)}_1 \otimes \id^{(r-1)} + (\id^{(1)} + h\bar{\nu}^{(1)}_1) \otimes \bar{\nu}^{(r-1)} \right) \\ 
        &= (\id^{(1)} + h \bar{\nu}^{(1)}) \otimes (\id^{(r - 1)} + h \bar{\nu}^{(r - 1)})
    \end{align*}
    and the proof is immediate. 
\end{proof}

From \Cref{prop:nu-and-sigma}, the operation $\bar{\nu}$ can be alternatively defined as
\[
    \bar{\nu} = \frac{\id + \sigma}{h}.
\]
This description allows us to extend $\bar{\nu}$ to the signed setting, using the signed involution $\hsigma$. 

\subsection{Signed Shumakovitch operation \texorpdfstring{$\hnu$}{nu-hat}}
\label{subsec:hnu}

Here, we consider the $U(2)$-equivariant Frobenius extension $(R_{h, t}, A_{h, t})$. 

\begin{lem}
\label{lem:id-minus-sigma}
    For any $r \geq 0$, the endomorphism $\id - \hsigma$ on $A_{h, t}^{\otimes r}$ is divisible by $h$. 
\end{lem}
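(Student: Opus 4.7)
The plan is a telescoping reduction from $A_{h,t}^{\otimes r}$ down to a single tensor factor, followed by a direct basis computation. For $0\le i\le r$, define the hybrid operators
\[
    C_i := \hsigma^{\otimes i} \otimes \id^{\otimes (r-i)}
\]
on $A_{h,t}^{\otimes r}$, so that $C_0 = \id$ and $C_r = \hsigma$. Then
\[
    \id - \hsigma = \sum_{i=1}^{r} (C_{i-1} - C_i) = \sum_{i=1}^{r} \hsigma^{\otimes (i-1)} \otimes (\id - \hsigma_1) \otimes \id^{\otimes (r-i)}.
\]
Granting that the single-factor map $\id - \hsigma_1\colon A_{h,t}\to A_{h,t}$ takes values in $hA_{h,t}$, the scalar $h$ produced in the $i$-th slot can be pulled to the front because $A_{h,t}^{\otimes r}$ is a tensor product over $R_{h,t}$. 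Hence each summand, and therefore the whole map, factors as $h$ times an $R'_{h,t}$-linear endomorphism of $A_{h,t}^{\otimes r}$.

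The single-factor claim is immediate on the $R'_{h,t}$-basis $\{1,\,h,\,X,\,hX\}$ of $A_{h,t}$: one computes
\[
    (\id-\hsigma_1)(1)=0, \quad (\id-\hsigma_1)(X) = X - (X-h) = h,
\]
\[
    (\id-\hsigma_1)(h) = 2h, \quad (\id-\hsigma_1)(hX) = hX - (-h)(X-h) = 2hX - h^2,
\]
all of which lie in $hA_{h,t}$. The base case $r=0$, where $A_{h,t}^{\otimes 0}=R_{h,t}$ and $\hsigma$ acts as $\hsigma_0$, is handled separately: on a monomial basis element $h^a t^b$ one has $(\id-\hsigma_0)(h^a t^b)=(1-(-1)^a)h^a t^b$, which is zero for even $a$ and equals $2 h^a t^b$ for odd $a\ge 1$, divisible by $h$ in either case.

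The only step that needs care, rather than a real obstacle, is the ``pulling $h$ out'' move: since $\hsigma$ is only $R'_{h,t}$-linear and not $R_{h,t}$-linear, one might worry the argument breaks. It does not, because we never move the scalar $h$ across a $\hsigma$; we only move it from an interior tensor slot to the front, which is legitimate in $\otimes_{R_{h,t}}$. The upshot is that $\hnu := (\id - \hsigma)/h$ is a well-defined $R'_{h,t}$-linear endomorphism of $A_{h,t}^{\otimes r}$, setting up the integral lift of the Shumakovitch operation treated in \Cref{subsec:hnu}.
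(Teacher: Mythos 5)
Your proof is correct and follows essentially the same route as the paper: the identity $\id - \hsigma = \sum_{i=1}^{r}\hsigma^{\otimes(i-1)}\otimes(\id-\hsigma_1)\otimes\id^{\otimes(r-i)}$ is exactly the telescoping decomposition the paper writes out (and iterates via induction), and your single-factor basis check matches the paper's $r=1$ case. The extra remark about moving the scalar $h$ to the front across the $R_{h,t}$-linear tensor factors is a useful point the paper leaves implicit, but it does not change the structure of the argument.
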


\begin{proof}
    When $r = 0$, we have 
    \[
        1 - \hsigma(1) = 0,\quad 
        h - \hsigma(h) = 2h,\quad 
        t - \hsigma(t) = 0
    \]
    and when $r = 1$,
    \[
        X - \hsigma(X) = h,\quad 
        hX - \hsigma(hX) = h(2X - h).
    \]
    For $r > 1$, we have 
    \begin{align*}
        (\id - \hsigma)(x_1 \otimes x_2 \otimes \cdots \otimes x_r) &= (\id - \hsigma)(x_1) \otimes x_2 \otimes \cdots \otimes x_r \\
        &+ \hsigma(x_1) \otimes (\id - \hsigma)(x_2) \otimes \cdots \otimes x_r \\
        &+ \cdots \\
        &+ \hsigma(x_1) \otimes \hsigma(x_2) \otimes \cdots \otimes (\id - \hsigma)(x_r)        
    \end{align*}
    so the claim follows by induction. 
\end{proof}

The above proof can be given a diagrammatic description. For example, when $r = 1$, 
\begin{center}
    \input{tikzpictures/id-minus-sigma}
\end{center}

From \Cref{lem:id-minus-sigma}, we may define the \textit{signed Shumakovitch operation} $\hnu$ as a ($\Z[h^2, t]$-module) endomorphism of $A_{h, t}^{\otimes r}$ by 
\[
    \hnu := \frac{\id - \hsigma}{h}. 
\]
Obviously, $\hnu$ has degree $-2$. Explicitly, we have 
\begin{gather*}
    \hnu(1) = 0,\quad \hnu(h) = 2,\quad \hnu(t) = 0,\quad \hnu(X) = 1,\quad \hnu(Y) = -1, \\
    \hnu(hX) = \hnu(hY) = 2X - h = U.
\end{gather*}

Again, observe that $\hnu$ is not an $R_{h,t}$-module involution, but an $R'_{h, t}$-module involution, where $R'_{h, t} = \Z[h^2, t]$. Also note that we need $h \neq 0$ to define $\hnu$. By setting $t = 0$ and tensoring the ground ring with $\F_2$, it is immediate from \Cref{prop:nu-and-sigma} that $\hnu$ recovers the extended Shumakovitch operation $\bar{\nu}$. 

\begin{remark}
\label{rem:hnu-over-U1xU1}
    The operation $\hnu$ does not extend to the $U(1) \times U(1)$-equivariant theory, since 
    \[
        (\id - \hsigma)(\alpha_i) = 2 \alpha_i
    \]
    is not divisible by $h = \alpha_1 + \alpha_2$. This problem will be revisited in \Cref{subsec:U1xU1-equiv}. 
\end{remark}

The following propositions generalize the properties of $\nu$ proved in \cite[Section 3]{Shumakovitch:2014}.

\begin{prop}
\label{prop:hnu-formula} $ $
    \begin{enumerate}
        \item $\hnu(x \otimes y) = \hnu(x) \otimes y + \hsigma(x) \otimes \hnu(y).$
        \item $\hsigma \, \hnu = - \hnu\, \hsigma = \hnu.$
        \item $\hnu^2 = 0$.
    \end{enumerate}
\end{prop}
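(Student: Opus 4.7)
\medskip

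\noindent\textbf{Proof proposal.} The strategy is to push every assertion back to the defining identity $h\hnu = \id - \hsigma$ (as operators on $A_{h,t}^{\otimes r}$) and exploit the fact that $h$ acts injectively on the free $R_{h,t}$-module $A_{h,t}^{\otimes r}$, so any identity of the form $h\,\phi = h\,\psi$ between $R'_{h,t}$-linear maps forces $\phi = \psi$. The only subtlety is that $\hsigma$ is not $R_{h,t}$-linear: it satisfies $\hsigma(h\cdot x) = -h\,\hsigma(x)$. I will use this repeatedly.

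For (1), I would multiply the proposed identity by $h$ and check the equivalent statement
\[
(\id-\hsigma)(x\otimes y) = h\,\hnu(x)\otimes y + h\,\hsigma(x)\otimes \hnu(y).
\]
Since $\hsigma$ on $A_{h,t}^{\otimes 2}$ is by definition $\hsigma_1\otimes\hsigma_1$, we have $\hsigma(x\otimes y) = \hsigma(x)\otimes\hsigma(y)$, and the telescoping
\[
x\otimes y - \hsigma(x)\otimes\hsigma(y) = (x-\hsigma(x))\otimes y + \hsigma(x)\otimes(y-\hsigma(y))
\]
gives the result after substituting $x - \hsigma(x) = h\hnu(x)$ and $y - \hsigma(y) = h\hnu(y)$, followed by cancellation of $h$.

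For (2), I apply $\hsigma$ on the left to $h\hnu = \id - \hsigma$. Because $\hsigma(h\,\xi) = -h\,\hsigma(\xi)$ for any $\xi \in A_{h,t}^{\otimes r}$, the left side becomes $-h\,\hsigma\hnu$, while the right side becomes $\hsigma - \id = -(\id - \hsigma) = -h\hnu$. Cancelling $-h$ yields $\hsigma\hnu = \hnu$. Composing $h\hnu = \id - \hsigma$ with $\hsigma$ on the \emph{right} gives $h\,\hnu\hsigma = \hsigma - \id = -h\hnu$, so $\hnu\hsigma = -\hnu$.

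For (3), multiply $\hnu^2$ by $h$ on the left, using the identity $h\hnu = \id - \hsigma$ (valid because $h$ acts by the same scalar regardless of which factor it multiplies):
\[
h\,\hnu^2 = (\id - \hsigma)\,\hnu = \hnu - \hsigma\hnu = \hnu - \hnu = 0,
\]
where the second equality is part (2). Injectivity of multiplication by $h$ on $A_{h,t}^{\otimes r}$ then gives $\hnu^2 = 0$. The main pitfall to watch for is the sign flip $\hsigma\circ(h\cdot) = -h\cdot\hsigma$, which is what makes the formulas $\hsigma\hnu = \hnu$ and $\hnu\hsigma = -\hnu$ consistent with $\hnu$ being an honest involutive-Frobenius notion rather than an $R_{h,t}$-linear map; once this sign convention is handled carefully, everything reduces to one-line manipulations of $h\hnu = \id - \hsigma$.
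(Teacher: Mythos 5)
Your proof is correct and takes essentially the same approach as the paper, which simply states that (1) and (2) are immediate from the definition $\hnu = (\id - \hsigma)/h$ and that (3) follows from (2); you have just written out the bookkeeping. The one place worth highlighting is your careful tracking of the sign $\hsigma(h\xi) = -h\hsigma(\xi)$ (coming from $\hsigma_0(h) = -h$), which is exactly what makes $\hsigma\hnu = +\hnu$ and $\hnu\hsigma = -\hnu$ come out differently, and your appeal to injectivity of multiplication by $h$ on the free $R_{h,t}$-module $A_{h,t}^{\otimes r}$ to justify cancelling $h$ is the right way to make the "immediate" argument rigorous.
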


\begin{proof}
    (1), (2) are immediate from the definition of $\hnu$, and (3) follows from (2).
\end{proof}

From \Cref{prop:hnu-formula} (1), one can easily compute, for instance,
\[
    \hnu(X \otimes X) = 1 \otimes X + Y \otimes 1 = 1 \otimes X + X \otimes 1 - h(1 \otimes 1)
\]
and 
\[
    \hnu(X \otimes Y) = 1 \otimes Y - Y \otimes 1 = 1 \otimes X - X \otimes 1.
\]

\begin{prop}
\label{prop:hnu-and-A}
    $\hnu$ commutes with the Frobenius algebra operations on $A_{h,t}$.
\end{prop}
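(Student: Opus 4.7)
The plan is to reduce each commutativity to the corresponding intertwining property of $\hsigma$ from Proposition \ref{prop:hsigma-ops}, using the defining identity $h\hnu = \id - \hsigma$. Since $h$ is a non-zero-divisor in $R_{h,t}$ and all maps in sight are $R'_{h,t}$-linear, it suffices in every case to verify the statement after multiplying by $h$, i.e.\ to verify the corresponding statement for $\id - \hsigma$.

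For the unit and counit, this is essentially immediate: from $\iota \circ \hsigma_0 = \hsigma_1 \circ \iota$ one gets $\iota \circ (\id - \hsigma_0) = (\id - \hsigma_1) \circ \iota$, and after dividing by $h$ we obtain $\iota \circ \hnu = \hnu \circ \iota$; symmetrically for $\epsilon$ using $\epsilon \circ \hsigma_1 = \hsigma_0 \circ \epsilon$. For multiplication $m$, the key point is that by Proposition \ref{prop:hnu-formula}(1) the extension of $\hnu$ to $A_{h,t}^{\otimes 2}$ is a $\hsigma$-twisted derivation, so I will compute
\[
    m\bigl(\hnu(x) \otimes y + \hsigma(x) \otimes \hnu(y)\bigr) = \tfrac{1}{h}\bigl(xy - \hsigma(x)\hsigma(y)\bigr) = \tfrac{1}{h}\bigl(xy - \hsigma(xy)\bigr) = \hnu(xy),
\]
where the middle equality uses the algebra-homomorphism property $m \circ (\hsigma \otimes \hsigma) = \hsigma \circ m$. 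The same manipulation works for comultiplication: if $\Delta(x) = \sum a_i \otimes b_i$, then the twisted derivation formula gives
\[
    \hnu(\Delta(x)) = \tfrac{1}{h}\sum\bigl(a_i\otimes b_i - \hsigma(a_i)\otimes\hsigma(b_i)\bigr) = \tfrac{1}{h}\bigl(\Delta(x) - (\hsigma\otimes\hsigma)\Delta(x)\bigr),
\]
and the right-hand side equals $\Delta(\hnu(x))$ by the coalgebra intertwining $\Delta \circ \hsigma = (\hsigma\otimes\hsigma)\circ \Delta$.

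There is no real obstacle: the entire argument is a formal consequence of Proposition \ref{prop:hsigma-ops} together with the tensor formula of Proposition \ref{prop:hnu-formula}(1). The only point requiring a bit of care is to make sure one uses the correct twisted Leibniz rule on $A_{h,t}^{\otimes 2}$ (with $\hsigma$ on the first tensor factor in the second summand) rather than a naive derivation rule, since without the $\hsigma$-twist the identity $\hnu(xy) = \hnu(x)y + x\hnu(y)$ would fail.
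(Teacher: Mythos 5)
Your proof is correct and follows essentially the same route as the paper's, which simply observes that $m, \iota, \Delta, \epsilon$ are $R_{h,t}$-linear so the factor $1/h$ passes through them, whence the result is immediate from Proposition~\ref{prop:hsigma-ops}. The detour through Proposition~\ref{prop:hnu-formula}(1) for $m$ and $\Delta$ is harmless but unnecessary --- the direct computation from $\hnu = (\id - \hsigma)/h$, exactly as you did for $\iota$ and $\epsilon$, works equally well in those cases.
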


\begin{proof}
    Since $\id, m, \iota, \Delta, \epsilon$ are $R_{h,t}$-module homomorphisms and the factor $\frac{1}{h}$ can be treated as a scalar, the result is immediate from \Cref{prop:hsigma-ops}. 
\end{proof}

Let $\Xbar$ denote the degree $2$ endomorphism on $A_{h, t}^{\otimes r}$ for $r \geq 1$ defined by 
\[
    \Xbar(x_1 \otimes x_2 \otimes \cdots \otimes x_r) := (X \cdot x_1) \otimes x_2 \otimes \cdots \otimes x_r. 
\]
Similarly define an endomorphism $\Ybar$.

\begin{prop}
\label{prop:hnu-acyclic}
\begin{align*}
    \hnu \circ \Xbar - \Ybar \circ \hnu &= \id,\\
    -\hnu \circ \Ybar + \Xbar \circ \hnu &= \id.
\end{align*}
\end{prop}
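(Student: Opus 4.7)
The plan is to reduce both identities to the case $r=1$ via the Leibniz-type rule of \Cref{prop:hnu-formula}(1), and then to verify the base case directly from the definition $\hnu = (\id - \hsigma)/h$ together with the multiplicativity of $\hsigma$ on $A_{h, t}$ (from \Cref{prop:hsigma-ops}).

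For the base case $r = 1$, I would apply $\hnu \circ \Xbar - \Ybar \circ \hnu$ to an arbitrary $x \in A_{h,t}$. Expanding yields
\[
\hnu(Xx) - Y\hnu(x) = \tfrac{1}{h}\bigl(Xx - \hsigma(Xx)\bigr) - \tfrac{1}{h}\bigl(Yx - Y\hsigma(x)\bigr).
\]
Multiplicativity of $\hsigma$ together with $\hsigma(X) = Y$ gives $\hsigma(Xx) = Y\hsigma(x)$, so the two terms involving $\hsigma(x)$ cancel, leaving $\tfrac{1}{h}(X-Y)x = x$ since $X - Y = h$. The second identity is the mirror computation using $\hsigma(Y) = X$: the terms $X\hsigma(x)$ cancel and the remainder is again $\tfrac{1}{h}(X-Y)x = x$.

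For the general case $r \geq 2$, I would write a pure tensor as $x_1 \otimes y$ with $y \in A_{h,t}^{\otimes (r-1)}$, so that $\Xbar$ and $\Ybar$ act only on the first factor. Applying \Cref{prop:hnu-formula}(1) gives
\[
\hnu(Xx_1 \otimes y) = \hnu(Xx_1) \otimes y + Y\hsigma(x_1) \otimes \hnu(y),
\]
while $\Ybar(\hnu(x_1 \otimes y)) = Y\hnu(x_1) \otimes y + Y\hsigma(x_1) \otimes \hnu(y)$. The terms containing $\hnu(y)$ cancel, and what remains is $\bigl(\hnu(Xx_1) - Y\hnu(x_1)\bigr) \otimes y = x_1 \otimes y$ by the base case. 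The second identity follows identically, with the $X\hsigma(x_1) \otimes \hnu(y)$ terms cancelling instead.

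I do not expect a real obstacle. The only mild subtlety is keeping in mind that $\hsigma$ is not $R_{h, t}$-linear but only $R'_{h, t}$-linear, so one cannot simply pull $X$ or $Y$ through $\hsigma$; however, the weaker fact needed here is multiplicativity of $\hsigma$ as a ring endomorphism on $A_{h, t}$, which is exactly what \Cref{prop:hsigma-ops} provides. All other ingredients are the elementary relations $\hsigma(X) = Y$, $\hsigma(Y) = X$, and $X - Y = h$.
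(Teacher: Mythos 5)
Your proof is correct and follows essentially the same route as the paper's: apply the Leibniz-type rule \Cref{prop:hnu-formula}(1) to $x = x_1 \otimes y$, observe that the $\hnu(y)$-terms combine into $\Ybar(\hnu(x))$ (equivalently, cancel under subtraction), and reduce to the single-factor identity $\hnu(Xx_1) - Y\hnu(x_1) = x_1$. The paper states that single-factor identity without explicit verification inside its chain of equalities, whereas you spell it out from $\hnu = (\id - \hsigma)/h$, $\hsigma(Xx_1) = Y\hsigma(x_1)$, and $X - Y = h$; the substance is the same.
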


\begin{proof}
    Put $x = x_1 \otimes y \in A_{h,t} \otimes A_{h,t}^{\otimes(r - 1)}$. We compute
    \begin{align*}
        \hnu (\Xbar(x))
        &= \hnu((Xx_1) \otimes y) \\
        &= \hnu(Xx_1) \otimes y + \hsigma(Xx_1) \otimes \hnu(y) \\ 
        &= (x_1 + Y \hnu(x_1)) \otimes y + Y\hsigma(x_1) \otimes \hnu(y) \\
        &= x + \Ybar(\hnu(x)).
    \end{align*}
    The second equation can be proved similarly. 
\end{proof}

Now, let $D$ be a link diagram. Regarding $\CKh_{h, t}(D)$ as a (infinitely generated) bigraded $\Z$-chain complex and $\Kh_{h, t}(D)$ as a bigraded $\Z$-module, \Cref{prop:hnu-formula,prop:hnu-and-A} imply that $\hnu$ induces an endomorphism on $\Kh_{h, t}(D)$ that squares to $0$. If $D$ is non-empty, we may fix a basepoint $p$ on $D$ so that endomorphisms $\Xbar, \Ybar$ are well defined on $\CKh_{h, t}(D)$ and on $\Kh_{h, t}(D)$. For each homological grading $i$, consider the sequence
\[
\begin{tikzcd}
\cdots \arrow[r, "-\Ybar"', shift right] & {\Kh^{i,j-2}(D)} \arrow[r, "\Xbar"', shift right] \arrow[l, "\hnu"', shift right] & {\Kh^{i,j}(D)} \arrow[l, "\hnu"', shift right] \arrow[r, "-\Ybar"', shift right] & {\Kh^{i,j+2}(D)} \arrow[l, "\hnu"', shift right] \arrow[r, "\Xbar"', shift right] & \cdots. \arrow[l, "\hnu"', shift right]
\end{tikzcd}
\]
\Cref{prop:hnu-acyclic} implies that $\id$ is null-homotopic with respect to the differential $\hnu$. Thus we obtain a generalization of \cite[Theorem 3.2.A]{Shumakovitch:2014}.

\begin{prop}
    If $D \neq \emptyset$, the complex $(\Kh_h(D), \hnu)$ is acyclic.
\end{prop}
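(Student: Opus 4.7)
The plan is to deduce acyclicity directly from the identities in \Cref{prop:hnu-acyclic}, which play the role of a contracting structure for the differential $\hnu$ on $\Kh_{h,t}(D)$. Since the paragraph preceding the proposition already notes that $\hnu$, $\Xbar$, and $\Ybar$ are well-defined endomorphisms of $\Kh_{h,t}(D)$ (the first from \Cref{prop:hnu-formula,prop:hnu-and-A}, the latter two from the choice of basepoint), no further descent arguments are required. Combined with $\hnu^2 = 0$ from \Cref{prop:hnu-formula}(3), the entire task reduces to showing $\ker \hnu \subset \operatorname{im} \hnu$ at every bidegree.

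To carry this out, I would take an arbitrary class $[x] \in \Kh_{h,t}^{i,j}(D)$ with $\hnu[x] = 0$ and apply the first identity of \Cref{prop:hnu-acyclic} on cohomology:
\[
    [x] \;=\; \bigl(\hnu \circ \Xbar - \Ybar \circ \hnu\bigr)[x] \;=\; \hnu\bigl(\Xbar[x]\bigr) - \Ybar(\hnu[x]) \;=\; \hnu\bigl(\Xbar[x]\bigr).
\]
This exhibits $[x]$ as the $\hnu$-image of $\Xbar[x] \in \Kh_{h,t}^{i,j+2}(D)$, so $[x] \in \operatorname{im} \hnu$. The reverse inclusion $\operatorname{im} \hnu \subset \ker \hnu$ is immediate from $\hnu^2 = 0$, yielding $\ker \hnu = \operatorname{im} \hnu$ and hence acyclicity of the complex. (Equivalently, the second identity $-\hnu \Ybar + \Xbar \hnu = \id$ could be used to the same effect with $\Xbar$ and $\Ybar$ swapped in roles.)

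I do not anticipate a genuine obstacle here: the algebraic work has already been done in \Cref{prop:hnu-formula,prop:hnu-acyclic}, and the non-emptiness assumption $D \neq \emptyset$ is used only to have basepoint-based operators $\Xbar, \Ybar$ available on cohomology. The only subtlety worth flagging — really a bookkeeping point rather than a hard step — is that the identity of \Cref{prop:hnu-acyclic}, which is formulated on the algebra $A_{h,t}^{\otimes r}$ at each vertex of the resolution cube, extends vertexwise to $\CKh_{h,t}(D)$ and commutes with the Khovanov differential, thereby descending to $\Kh_{h,t}(D)$. Once that is observed, the one-line computation above finishes the proof.
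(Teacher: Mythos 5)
Your proof is correct and relies on the same key input as the paper's, namely the operator identities of \Cref{prop:hnu-acyclic}. The paper phrases the conclusion by observing that these identities make the identity map null-homotopic on the doubly-infinite complex (with the homotopy alternating between $\Xbar$ and $-\Ybar$), whereas you verify $\ker\hnu\subset\operatorname{im}\hnu$ directly from a single one of the two identities — this is an equivalent but slightly more streamlined route, since it sidesteps the parity bookkeeping involved in assembling the alternating homotopy. Both arguments depend on the same preliminary observations (that $\hnu$, $\Xbar$, $\Ybar$ are chain maps descending to homology and that $\hnu^2=0$), which you rightly flag.
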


\subsection{Reduced homology and splitting}
\label{subsec:reduced}

Here, we consider the $U(1)$-equivariant theory over the ground ring $R_h=\Z[h]$. Let $D$ be a pointed link diagram with marked point $p$. Here, for notational simplicity, we write $C = \CKh_h(D)$, and let $C_X$ denote the subcomplex of $C$ generated by enhanced states of the form
\[
    x = \underline{X} \otimes x_2 \otimes \cdots \otimes x_r,
\]
where the underline indicates the factor corresponding to the circle containing $p$. The subcomplex $C_Y$ is defined similarly. The two complexes are isomorphic, and either one may well be called the \textit{reduced $U(1)$-equivariant Khovanov complex} of $D$, denoted $\rCKh_h(D)$. (Later, in \Cref{sec:rasmussen}, we choose the one that contains the \textit{Lee cycle} $\ca(D)$ of $D$). Its homology is called the \textit{reduced $U(1)$-equivariant Khovanov homology} and denoted $\rKh_h(D)$. It is conventional to shift the quantum grading of the reduced complex by $-1$ so that we have 
\[
    \rCKh_h(\bigcirc) = \rKh_h(\bigcirc) \cong R_h. 
\]

\begin{figure}[t]
    \centering
    \begin{subfigure}{0.45\linewidth}
\centering
\begin{tabular}{r|lll}
$7$ & $\F_2[h]/(h)$ & $.$ & $.$ \\
$5$ & $\F_2[h]/(h)$ & $.$ & $.$ \\
$3$ & $.$ & $.$ & $\F_2[h]$ \\
$1$ & $.$ & $.$ & $\F_2[h]$ \\
\hline
$ $ & $-2$ & $-1$ & $0$ \\
\end{tabular}
\caption{$\Kh_h(3_1; \F_2)$}
    \end{subfigure}
    \begin{subfigure}{0.45\linewidth}
\centering
\begin{tabular}{r|lll}
$6$ & $\F_2[h]/(h)$ & $.$ & $.$ \\
$4$ & $.$ & $.$ & $.$ \\
$2$ & $.$ & $.$ & $\F_2[h]$ \\
\hline
$ $ & $-2$ & $-1$ & $0$ \\
\end{tabular}
\caption{$\rKh_h(3_1; \F_2)$}        
    \end{subfigure}
    \par\vspace{2em}
    \begin{subfigure}{0.45\linewidth}
\centering
\begin{tabular}{r|lll}
$5$ & $\Q[h]/(h^2)$ & $.$ & $.$ \\
$3$ & $.$ & $.$ & $\Q[h]$ \\
$1$ & $.$ & $.$ & $\Q[h]$ \\
\hline
$ $ & $-2$ & $-1$ & $0$ \\
\end{tabular}
\caption{$\Kh_h(3_1; \Q)$}
    \end{subfigure}
    \begin{subfigure}{0.45\linewidth}
\centering
\begin{tabular}{r|lll}
$6$ & $\Q[h]/(h)$ & $.$ & $.$ \\
$4$ & $.$ & $.$ & $.$ \\
$2$ & $.$ & $.$ & $\Q[h]$ \\
\hline
$ $ & $-2$ & $-1$ & $0$ \\
\end{tabular}
\caption{$\rKh_h(3_1; \Q)$}    
    \end{subfigure}
    \caption{The unreduced and reduced $U(1)$-equivariant homology for the trefoil $3_1$ over $\F_2$ and $\Q$.}
    \label{tab:compare-CKh-for-F2-and-Q}
\end{figure}

\Cref{tab:compare-CKh-for-F2-and-Q} shows the unreduced and reduced $U(1)$-Khovanov homologies for the trefoil over $\F_2$ and $\Q$.\footnote{
    The computation is performed using the program \verb|YUI| developed by the second author~\cite{Sano:YUI}.    
} Over $\F_2$, one can see that the unreduced homology splits into two copies of the reduced homology, as generally proved by Wigderson on the complex level \cite{Wigderson:2016}. On the other hand, over $\Q$, there is a single torsion summand $\Q[h]/(h^2)$ in the unreduced homology, which is not isomorphic to the direct sum of two copies of $\Q[h]/(h)$ in the reduced homology. Thus, the splitting property does not hold over $\Q[h]$. However, if we consider over the subring $\Q[h^2]$, then we have 
\[
    \Q[h]/(h^2) \cong \Q[h]/(h) \oplus q^2 \Q[h]/(h)
    \ \ \text{(over $\Q[h^2]$)}.
\]
The following theorem states that this holds in general. 

\begin{thm}
\label{thm:split}
    Over the subring $R'_h = \Z[h^2]$ of $R_h$, the unreduced $U(1)$-equivariant Khovanov complex $\CKh_h(D)$ splits into the direct sum of two copies of the reduced complex $\rCKh_h(D)$. 
\end{thm}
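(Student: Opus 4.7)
The plan is to construct a pair of orthogonal chain idempotents $P, Q$ on $\CKh_h(D)$ over $R'_h = \Z[h^2]$ whose images are (shifts of) the two reduced subcomplexes $C_X$ and $C_Y$. I would set
\[
P := \hnu \Xbar, \qquad Q := -\Ybar \hnu.
\]
Both are $R'_h$-linear chain endomorphisms of $\CKh_h(D)$, since $\hnu = (\id - \hsigma)/h$ is $R'_h$-linear and a chain map, while $\Xbar, \Ybar$ are $R_h$-linear chain maps. The special feature of the $U(1)$-theory is that $XY = 0$ (from $X^2 = hX$), so $\Xbar \Ybar = \Ybar \Xbar = 0$. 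Combined with $\hnu \Xbar - \Ybar \hnu = \id = \Xbar \hnu - \hnu \Ybar$ (\Cref{prop:hnu-acyclic}) and $\hnu^2 = 0$ (\Cref{prop:hnu-formula}), a short direct computation would give
\[
P + Q = \id, \qquad P^2 = P, \qquad Q^2 = Q, \qquad PQ = QP = 0.
\]
This already yields a direct sum decomposition $\CKh_h(D) = \Ima P \oplus \Ima Q$ of $R'_h$-chain subcomplexes.

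Next I would identify the two summands with $C_Y$ and $C_X$, respectively. From $\Xbar P = (\Xbar \hnu)\Xbar = (\id + \hnu \Ybar)\Xbar = \Xbar$ (using $\Ybar \Xbar = 0$), one gets $\Xbar Q = \Xbar - \Xbar P = 0$, hence $\Ima Q \subseteq \Ker \Xbar = C_Y$; conversely, any $v \in C_Y$ satisfies $Pv = \hnu(\Xbar v) = 0$, so $v = Qv \in \Ima Q$. Therefore $\Ima Q = C_Y$. For the other summand, I would show that $\Xbar$ restricts to a chain isomorphism $\Xbar \colon \Ima P \xrightarrow{\sim} C_X$ of bidegree $(0, +2)$. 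Surjectivity is immediate: given $w \in C_X$, pick any lift $v$ with $\Xbar v = w$, and then $\Xbar(Pv) = \Xbar v = w$. Injectivity is the cleanest step: if $v \in \Ima P$ and $\Xbar v = 0$, then $v \in \Ima P \cap C_Y = \Ima P \cap \Ima Q = 0$. Finally, $C_X$ and $C_Y$ are both models of $\rCKh_h(D)$ (canonically identified with each other via $\hsigma$), so combining these identifications yields
\[
\CKh_h(D) \cong \rCKh_h(D) \oplus \rCKh_h(D)
\]
as $R'_h$-chain complexes, with an appropriate quantum grading shift between the two copies.

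The main conceptual point, which is also the obstacle to an analogous $R_h$-linear splitting, is that $P$ and $Q$ are only $R'_h$-linear (because $\hnu$ is not $R_h$-linear). Restricting scalars to $R'_h = \Z[h^2]$ is therefore essential; this is consistent with the $\Q[h]$ trefoil computation, where the unreduced homology carries a nontrivial torsion summand $\Q[h]/(h^2)$ that decomposes into two copies of $\Q[h]/(h)$ only after restriction to $\Q[h^2]$.
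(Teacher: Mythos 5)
Your proof is correct and is essentially the paper's argument repackaged: the paper proves the theorem via Proposition~\ref{prop:red-ses}, exhibiting $\hnu|_{C_X}$ as a section of the short exact sequence $0\to C_Y\hookrightarrow C\xrightarrow{\Xbar} C_X\to 0$, and your orthogonal idempotents $P=\hnu\Xbar$ and $Q=\id-P=-\Ybar\hnu$ are exactly the two projection operators determined by that splitting, with $\Ima P=\hnu(C_X)$ and $\Ima Q=C_Y$. Both arguments hinge on the same identity $\hnu\Xbar-\Ybar\hnu=\id$ from Proposition~\ref{prop:hnu-acyclic}; your idempotent formulation additionally invokes $\hnu^2=0$ and $\Xbar\Ybar=0$, which you correctly identify as features of the $U(1)$-specialization $t=0$.
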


\Cref{thm:split} follows from the following proposition.

\begin{prop}
\label{prop:red-ses}
    The two rows of the following diagram are short exact sequences of $R_h$-complexes, and the involution $\hsigma$ gives an $R'_h$-isomorphism between them. Moreover, for each row, the restriction of the endomorphism $\hnu$ gives a splitting as $R'_h$-chain complexes.
    \[
\begin{tikzcd}[row sep=3em, column sep=4em]
0 \arrow[r] & C_X \arrow[r, hook] \arrow[d, "\hsigma"] & C \arrow[r, "\Ybar"] \arrow[d, "\hsigma"] & C_Y \arrow[r] \arrow[d, "\hsigma"] \arrow[l, "-\hnu", dashed, bend left] & 0 \\
0 \arrow[r] & C_Y \arrow[r, hook] & C \arrow[r, "\Xbar"] & C_X \arrow[r] \arrow[l, "\hnu", dashed, bend left] & 0.
\end{tikzcd}
    \]
\end{prop}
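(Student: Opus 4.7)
My plan is to verify each claim by local computation in $A_h$, using the identities $X^2 = hX$, $Y^2 = hY$, and $XY = YX = 0$ (all consequences of $X(X-h)=0$), together with the tensor-product formulas for $\hsigma$ and $\hnu$ established in \Cref{prop:hsigma-ops,prop:hnu-formula,prop:hnu-and-A}.

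For the exactness of the top row, I would write a typical generator of $C$ as $(a + bX) \otimes \bar c$ with $a, b \in R_h$ and $\bar c \in A_h^{\otimes (r-1)}$, placing the marked factor first. Since $YX = 0$, we get $\Ybar((a + bX) \otimes \bar c) = aY \otimes \bar c$, so $\ker \Ybar = C_X$ and $\Ima \Ybar = C_Y$; the map $\Ybar$ is a chain map because multiplication by $Y$ at the marked site commutes with the Frobenius operations elsewhere. The second row is symmetric. The involution $\hsigma$ swaps $X \leftrightarrow Y$ (from $\hsigma(X)=Y$ and $\hsigma^2 = \id$) and is an $R'_h$-linear chain involution on $C$ (\Cref{prop:cobordism-map-and-hsigma-commute}), so it sends $C_X$ to $C_Y$ and satisfies $\hsigma \circ \Xbar = \Ybar \circ \hsigma$; this yields the vertical $R'_h$-chain isomorphism between the two short exact sequences.

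For the splittings, I would take a typical generator $Y \otimes \bar c$ of $C_Y$ and apply the derivation rule \Cref{prop:hnu-formula}(1) with $\hnu(Y) = -1$ and $\hsigma(Y) = X$ to obtain
\[
    -\hnu(Y \otimes \bar c) = 1 \otimes \bar c - X \otimes \hnu(\bar c).
\]
Then $\Ybar(-\hnu(Y \otimes \bar c)) = Y \otimes \bar c - YX \otimes \hnu(\bar c) = Y \otimes \bar c$ since $YX = 0$, so $\Ybar \circ (-\hnu) = \id_{C_Y}$; the symmetric computation with $\hnu(X) = 1$, $\hsigma(X) = Y$, and $XY = 0$ handles the bottom row. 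That $\hnu$ itself is a chain map is inherited from $\hsigma$: since $\hsigma$ commutes with the $R_h$-linear Khovanov differential $d$, the relation $h(d\hnu - \hnu d) = d(\id - \hsigma) - (\id - \hsigma)d = 0$ combined with the $h$-torsion-freeness of $C$ (a free $R_h$-module on enhanced states) forces $d\hnu = \hnu d$, and the restriction of a chain map to a subcomplex remains a chain map.

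The only conceptual subtlety to track is that these splittings are $R'_h$-linear but not $R_h$-linear, because $\hnu$ does not respect multiplication by $h$; this is precisely why the resulting decomposition in \Cref{thm:split} must live over the subring $R'_h = \Z[h^2]$ rather than over $R_h$. No serious technical obstacle is anticipated, as the entire argument reduces to the two local identities $YX = 0$ and $\hnu(Y) = -1$ (and their $X \leftrightarrow Y$ symmetric counterparts).
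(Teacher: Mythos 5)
Your proof is correct and follows essentially the same route as the paper: the paper cites \Cref{prop:hnu-acyclic} (the identity $\hnu\circ\Xbar - \Ybar\circ\hnu = \id$, which you effectively re-derive on $C_Y$ via the derivation rule for $\hnu$ together with $YX = 0$ and $\hnu(Y) = -1$), and your $h$-torsion-freeness argument for $\hnu$ being a chain map mirrors the one in \Cref{prop:hnu-and-A}. One cosmetic correction: in your opening list of identities, $Y^2 = hY$ should read $Y^2 = -hY$ (since $Y^2 = Y(X-h) = YX - hY = -hY$), though this particular relation is never actually used in your argument.
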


\begin{proof}
    The first two statements are straightforward. From \Cref{prop:hnu-acyclic}, we have 
    \[
        \hnu \circ \Xbar - \Ybar \circ \hnu = \id
    \]
    on $C$, but $\Xbar$ restricts to $0$ on $C_Y$, proving that $-\hnu$ gives a section of $\Ybar$. 
\end{proof}

\begin{remark}
    In characteristic $2$, the endomorphism appearing in \Cref{prop:red-ses} are already endomorphisms over the ground ring $\F_2[h]$, so a similar argument shows that the $\F_2$-Bar-Natan homology splits (\cite[Theorem 4]{Wigderson:2016}), and further setting $h = 0$ shows that the $\F_2$-Khovanov homology splits (\cite[Corollary 3.2.C]{Shumakovitch:2014}).
\end{remark}

\begin{remark}
    The explicit splitting of \Cref{thm:split} and that of \cite[Theorem 4]{Wigderson:2016} are related as follows. Here, we work in characteristic 2. First, as an $\F_2[h]$-module, we identify the quotient complex $C / C_X$ with the submodule $C_1$ of $C$, which is generated by enhanced states of the form
    \[
        x = \underline{1} \otimes x_2 \otimes \cdots \otimes x_r.
    \]
    We have $C = C_1 \oplus C_X$ as modules, and the differential $d$ of $C$ can be written as 
    \[
        d = \begin{pmatrix}
            d_1 & \\
            f & d_X
        \end{pmatrix}
    \]
    where $d_1, d_X$ are differentials of $C_1, C_X$ respectively, and $f$ is a chain map 
    \[
        f\colon C_1 \to C_X
    \]
    so that $C$ is the mapping cone of $f$. With $C_Y \cong C / C_X = C_1$, the short exact sequence of \Cref{prop:red-ses} can be rewritten as 
    \[
\begin{tikzcd}
0 \arrow[r] & C_X \arrow[r, hook] & C \arrow[r] & C_1 \arrow[r] \arrow[l, "s", dashed, bend left] & 0
\end{tikzcd}
    \]
    where the section $s$ maps 
    \[
        s(\underline{1} \otimes y) = 
        \hnu(\underline{Y} \otimes y) = \underline{1} \otimes y + \underline{X} \otimes \hnu(y). 
    \]
    Let $K$ denote the map 
    \[
        K\colon C_1 \to C_X; \quad 
        \underline{1} \otimes y \mapsto \underline{X} \otimes \hnu(y). 
    \]
    Then the isomorphism $C_1 \oplus C_X \to C$ is given by 
    \[
        \begin{pmatrix} \id \\ K & \id \end{pmatrix}.
    \]
    This $K$ is exactly the null-homotopy of $f$ constructed in \cite{Wigderson:2016}. Indeed, the equation 
    \[
        \begin{pmatrix}
            d_1 \\ 
            f & d_X
        \end{pmatrix}
        \begin{pmatrix} 
            \id \\ 
            K & \id 
        \end{pmatrix}
        = 
        \begin{pmatrix}
            \id \\ 
            K & \id
        \end{pmatrix}
        \begin{pmatrix}
            d_1 & \\
            & d_X
        \end{pmatrix}
    \]
    gives 
    \[
        f = d_X K + K d_1.
    \]
    Furthermore, Wigderson gives an explicit isomorphism
    \[
        \Xbar + hK \colon C_1 \to C_X
    \]
    which can be rewritten from \Cref{prop:nu-and-sigma} as 
    \[
        \Xbar + hK = \Xbar(\id + h \hnu) |_{C_1} = \Xbar \sigma  |_{C_1}.
    \]
    This is an alternative description of the isomorphism
    \[
\begin{tikzcd}
C_1 \arrow[r, "\sim"] & C_Y \arrow[r, "\sigma"] & C_X.
\end{tikzcd}        
    \]
\end{remark}

\begin{remark}
    \Cref{tab:compare-CKh-for-F2-and-Q} shows that, over $\Q[h^2]$, the torsion part $\Q[h]/(h^2)$ in the unreduced indeed splits as two copies of $\Q[h]/(h)$ in the reduced homology. This pattern arises very often in other examples, but is not always the case. A counterexample is given by the 38-crossing knot $K$ found by Manolescu and Marengon in \cite[Figure 1]{Manolescu-Marengon:2020}, for which the \textit{Knight Move Conjecture} \cite[Conjecture 1]{Bar-Natan:2002} does not hold. The unreduced homology of this knot $K$, computed over $\Q[h]$, has 
    \[
        q^{-9}\Q[h]/(h^4) \oplus q^{-9}\Q[h]/(h^2) \oplus q^{-7}\Q[h]/(h^2)
    \]
    in homological grading $2$, whereas the reduced homology has 
    \[
        q^{-8} \Q[h]/(h^3) \oplus q^{-8} \Q[h]/(h)
    \]
    in the same homological grading. 
    Generators of the unreduced and reduced homologies correspond as follows.
    \[
\begin{tikzcd}
h^3a & & & {} \arrow[ddd, no head, dotted] & & h^2x' & & \\
h^2a & & hc & & h^2x & hx' & & \\
ha \arrow[uu, dashed, bend right] & hb & c & & hx & x' \arrow[uu, dashed, bend right] & & y' \\
a \arrow[uu, dashed, bend left] & b & & {} & x \arrow[uu, dashed, bend left] & & y &   
\end{tikzcd}
    \]
    Here, $a, b, c$ and $x, y, z$ are the corresponding generators in the unreduced and the reduced homologies, and $x', y', z'$ are copies of $x, y, z$. Dashed arrows indicate the multiplication by $h^2$. 
\end{remark}

\subsection{Splitting of \texorpdfstring{$U(1) \times U(1)$}{U(1) x U(1)}-equivariant theory}
\label{subsec:U1xU1-equiv}

As mentioned in \Cref{rem:hnu-over-U1xU1}, the operation $\hnu$ does not extend over the $U(1) \times U(1)$-equivariant theory $(R_\alpha, A_\alpha)$, if we regard $(R_\alpha, A_\alpha)$ as an involutive Frobenius extension via $\hsigma$. 
Nonetheless, recall that there is an additional symmetry $\sigma_\alpha$ of $(R_\alpha, A_\alpha)$ that transposes the roots $\alpha_1, \alpha_2$ and fixes $X$.
This involution $\sigma_\alpha$ on $(R_\alpha, A_\alpha)$ is an endomorphism over $\Z$, and can be regarded as an extension of $\hsigma$ on $(R_h, A_h)$ under the following inclusion
\[
    s \colon (R_h, A_h) \hookrightarrow (R_\alpha, A_\alpha);\quad 
    h \mapsto \alpha_2 - \alpha_1,\quad 
    X \mapsto X - \alpha_1. 
\]
Indeed, we have 
\[
\begin{tikzcd}
h \arrow[r, "s", maps to] \arrow[d, "\hsigma", maps to] & \alpha_2 - \alpha_1 \arrow[d, "\sigma_\alpha", maps to] & X \arrow[r, "s", maps to] \arrow[d, "\hsigma", maps to] & X - \alpha_1 \arrow[d, "\sigma_\alpha", maps to] \\
-h \arrow[r, "s", maps to] & \alpha_1 - \alpha_2 & X - h \arrow[r, "s", maps to] & X - \alpha_2
\end{tikzcd}
\]
Note that $s$ is a section of the projection
\[
    (R_\alpha, A_\alpha) \twoheadrightarrow (R_h, A_h);\quad 
    \alpha_1 \mapsto 0,\quad 
    \alpha_2 \mapsto h.
\]
This projection breaks the symmetry between $\alpha_1$ and $\alpha_2$. 
Similarly, define an involution $\sigma_{\sqrt{t}}$ on $(R_{\sqrt{t}}, A_{\sqrt{t}})$ by
\[
    \sigma_{\sqrt{t}}(\sqrt{t}) = -\sqrt{t}.
\]
These involutions fit into the following commutative diagram of involutive Frobenius extensions:
\[
\begin{tikzcd}[row sep=4em, column sep=4em]
& {(R_\alpha, A_\alpha)} \arrow[ld, "{\substack{\alpha_1 = 0,\\ \alpha_2 = h}}" description] \arrow[rd, "{\substack{\alpha_1 = -\sqrt{t},\\ \alpha_2 = \sqrt{t}}}" description] \arrow["\sigma_\alpha"', loop, distance=2em, in=125, out=55] & \\
{(R_h, A_h)} \arrow[ru, "{\substack{h = \alpha_2 - \alpha_1,\\X \mapsto X - \alpha_1}}" description, dashed, bend left=49] \arrow[rr, "{\substack{h = 2\sqrt{t},\\ X \mapsto X + \sqrt{t}}}" description] \arrow["\hsigma"', loop, distance=2em, in=305, out=235] & & {(R_{\sqrt{t}}, A_{\sqrt{t}})} \arrow["\sigma_{\sqrt{t}}"', loop, distance=2em, in=305, out=235]
\end{tikzcd} 
\]

Note that $\sigma_\alpha$ is incompatible with the involution $\hsigma$ on $(R_{h,t}, A_{h,t})$ under the inclusion 
\[
    (R_{h,t}, A_{h,t}) \hookrightarrow (R_\alpha, A_\alpha); \quad 
    h \mapsto \alpha_1 + \alpha_2, \quad 
    t \mapsto -\alpha_1\alpha_2,
\]
since we have $\hsigma(h) = -h$, but $\sigma_\alpha$ fixes $\alpha_1 + \alpha_2$. Instead, if we regard $(R_{h,t}, A_{h,t})$ as an involutive Frobenius extension with the trivial involution, which makes the above inclusion into an involutive homomorphism. 

Now, with $\sigma_\alpha$, we can define an operation $\nu_\alpha$ on $(R_\alpha, A_\alpha)$ that extends $\hnu$ on $(R_h, A_h)$. Indeed, 

\begin{prop}
    $\id - \sigma_\alpha$ is divisible by $c = \alpha_2 - \alpha_1$. 
\end{prop}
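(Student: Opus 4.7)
The plan is to mirror the inductive telescoping argument used for \Cref{lem:id-minus-sigma}, with the role of $h$ played by $c = \alpha_2 - \alpha_1$. I will first establish divisibility at the level of the ground ring $R_\alpha$, then promote it to $A_\alpha$, and finally to arbitrary tensor powers $A_\alpha^{\otimes r}$.

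First, on $R_\alpha = \Z[\alpha_1, \alpha_2]$, any element may be written as a polynomial $f(\alpha_1, \alpha_2)$, and $(\id - \sigma_\alpha)(f) = f(\alpha_1, \alpha_2) - f(\alpha_2, \alpha_1)$. This polynomial vanishes on the diagonal $\alpha_1 = \alpha_2$, so it is divisible by $\alpha_1 - \alpha_2 = -c$ in $R_\alpha$; a standard degree-by-degree argument (or the identity $\alpha_1^a \alpha_2^b - \alpha_2^a \alpha_1^b = \alpha_1^{\min(a,b)}\alpha_2^{\min(a,b)}(\alpha_1^{|a-b|} - \alpha_2^{|a-b|})$ together with $\alpha_1^n - \alpha_2^n = -c \sum_{i+j=n-1} \alpha_1^i \alpha_2^j$) makes the divisor explicit.

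Next, $A_\alpha$ is a free $R_\alpha$-module with basis $\{1, X\}$, on both of which $\sigma_\alpha$ acts as the identity. Hence any $x = f + gX \in A_\alpha$ satisfies
\[
    (\id - \sigma_\alpha)(x) = (f - \sigma_\alpha(f)) + (g - \sigma_\alpha(g))X,
\]
and both coefficients lie in $c \cdot R_\alpha$ by the previous step. Thus $(\id - \sigma_\alpha)(A_\alpha) \subset c \cdot A_\alpha$.

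For the tensor case $A_\alpha^{\otimes r}$, I would apply exactly the telescoping identity used in the proof of \Cref{lem:id-minus-sigma}:
\[
    (\id - \sigma_\alpha^{\otimes r})(x_1 \otimes \cdots \otimes x_r) = \sum_{i=1}^r \sigma_\alpha(x_1) \otimes \cdots \otimes (\id - \sigma_\alpha)(x_i) \otimes \cdots \otimes x_r.
\]
Each summand contains the factor $(\id - \sigma_\alpha)(x_i) \in c \cdot A_\alpha$, so the whole expression lies in $c \cdot A_\alpha^{\otimes r}$, yielding a well-defined $\Z[c^2,\alpha_1\alpha_2]$-linear endomorphism $\nu_\alpha = (\id - \sigma_\alpha)/c$. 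I do not anticipate a real obstacle; the only mild subtlety is to note that although $\sigma_\alpha$ is $R_\alpha$-linear (unlike $\hsigma$, which is only semilinear over $R_{h,t}$), the resulting $\nu_\alpha$ is still an endomorphism only over the subring fixed by $\sigma_\alpha$, namely $\Z[\alpha_1 + \alpha_2, \alpha_1\alpha_2] = R_{h,t}$, which is the natural analogue of the ring $R'_{h,t}$ appearing earlier.
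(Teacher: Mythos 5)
Your proof is correct and mirrors the telescoping strategy of Lemma~\ref{lem:id-minus-sigma}, which is clearly the argument the authors have in mind (the paper provides no proof at all for this proposition). The reduction is clean: the antisymmetric polynomial $f(\alpha_1,\alpha_2) - f(\alpha_2,\alpha_1)$ vanishes on the diagonal and is hence divisible by $\alpha_1 - \alpha_2$ in $\Z[\alpha_1,\alpha_2]$; lifting to $A_\alpha$ via the $\sigma_\alpha$-fixed basis $\{1,X\}$ works because $\sigma_\alpha(f+gX) = \sigma_\alpha(f) + \sigma_\alpha(g)X$; and the telescoping sum handles tensor powers.

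One remark in your closing paragraph is incorrect and should be reversed: $\sigma_\alpha$ is \emph{not} $R_\alpha$-linear. Since $\sigma_\alpha(\alpha_1 x) = \alpha_2\,\sigma_\alpha(x)$, it is semilinear over $R_\alpha$ with respect to $\sigma_\alpha|_{R_\alpha}$, exactly parallel to the situation for $\hsigma$ over $R_{h,t}$. Both $\sigma_\alpha$ and the resulting $\nu_\alpha$ are linear only over the subring fixed by $\sigma_\alpha$, which is $\Z[\alpha_1+\alpha_2,\alpha_1\alpha_2] \cong R_{h,t}$ (not the smaller $\Z[c^2,\alpha_1\alpha_2]$ you also mention). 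This is in fact a \emph{larger} ring of scalars than the $\Z[h^2]$ available in the $U(1)$ case, which is precisely why \Cref{thm:split-U1xU1} holds over $\Z$ with no further restriction. The error is not load-bearing — your divisibility argument never invokes $R_\alpha$-linearity — but the parenthetical as written misstates the structure.
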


Thus, we may define an operation on $A^{\otimes r}_\alpha$ as
\[
    \nu_\alpha := \frac{\id - \sigma_\alpha}{\alpha_2 - \alpha_1}.
\]
In particular, we have 
\begin{gather*}
    \nu_\alpha(1) = 0,\quad 
    \nu_\alpha(\alpha_1) = -1,\quad 
    \nu_\alpha(\alpha_2) = 1, \quad 
    \nu_\alpha(X) = 0.
\end{gather*}
With $X_i := X - \alpha_i$ ($i = 1, 2$), we have 
\[
    \sigma_\alpha(X_1) = X_2,\quad 
    \sigma_\alpha(X_2) = X_1,\quad 
    \nu_\alpha(X_1) = -1,\quad 
    \nu_\alpha(X_2) = 1.
\]
Let $\Xbar_i$ ($i = 1, 2$) denote the endomorphism on $A_\alpha^{\otimes r}$ for $r \geq 1$ defined by 
\[
    \bar{X_i}(x_1 \otimes x_2 \otimes \cdots \otimes x_r) := (X_i \cdot x_1) \otimes x_2 \otimes \cdots \otimes x_r. 
\]
The following propositions are analogous to those proved in \Cref{subsec:hnu}, and are easy to verify. 

\begin{prop} $ $
    \begin{enumerate}
        \item $\nu_\alpha(x \otimes y) = \nu_\alpha(x) \otimes y + \sigma_\alpha(x) \otimes \nu_\alpha(y).$
        \item $\sigma_\alpha \, \nu_\alpha = - \nu_\alpha\, \sigma_\alpha = \nu_\alpha.$
        \item $\nu_\alpha^2 = 0$.
    \end{enumerate}
\end{prop}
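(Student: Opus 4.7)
The plan is to mirror almost verbatim the proof of the analogous statement for $\hnu$ established earlier (\Cref{prop:hnu-formula}), since the construction of $\nu_\alpha$ is a direct transcription of that of $\hnu$ with $h$ replaced by $c := \alpha_2 - \alpha_1$ and $\hsigma$ replaced by $\sigma_\alpha$. The essential input is that $c$ is a non-zero-divisor in $R_\alpha = \Z[\alpha_1,\alpha_2]$ and hence acts injectively on the free $R_\alpha$-module $A_\alpha^{\otimes r}$, so that division by $c$ is well-defined whenever the numerator lands in $c\cdot A_\alpha^{\otimes r}$. This is exactly what the preceding proposition (the divisibility statement for $\id-\sigma_\alpha$) ensures.

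For (1), I would unfold the definition on $x\otimes y$:
\[
c\cdot\nu_\alpha(x\otimes y) = x\otimes y - \sigma_\alpha(x)\otimes\sigma_\alpha(y),
\]
and then add and subtract $\sigma_\alpha(x)\otimes y$ to rewrite the right-hand side as
\[
(\id - \sigma_\alpha)(x)\otimes y + \sigma_\alpha(x)\otimes(\id-\sigma_\alpha)(y).
\]
Dividing through by $c$ (term by term, which is legal by the divisibility statement applied to each summand) gives the Leibniz rule. This step is exactly parallel to \Cref{prop:hnu-formula}(1).

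For (2), I would use that $\sigma_\alpha$ is a ring involution, so $\sigma_\alpha^2=\id$ and $\sigma_\alpha(c)=-c$. Then
\[
c\cdot(\nu_\alpha\circ\sigma_\alpha) = (\id-\sigma_\alpha)\circ\sigma_\alpha = \sigma_\alpha-\id = -c\cdot\nu_\alpha,
\]
giving $\nu_\alpha\sigma_\alpha=-\nu_\alpha$. For $\sigma_\alpha\nu_\alpha$, apply $\sigma_\alpha$ to $c\cdot\nu_\alpha(x)=x-\sigma_\alpha(x)$ to obtain $-c\cdot\sigma_\alpha(\nu_\alpha(x)) = \sigma_\alpha(x)-x = -c\cdot\nu_\alpha(x)$, whence $\sigma_\alpha\nu_\alpha=\nu_\alpha$. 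Part (3) is then an immediate corollary of (2): since $\sigma_\alpha\nu_\alpha=\nu_\alpha$, the image of $\nu_\alpha$ is contained in the fixed subspace of $\sigma_\alpha$, and any fixed vector $v$ satisfies $c\cdot\nu_\alpha(v) = v-\sigma_\alpha(v)=0$, so $\nu_\alpha(v)=0$ by injectivity of multiplication by $c$.

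There is no real obstacle here; the only care needed is to justify the cancellation of $c$ in each identity (rather than treating $\nu_\alpha$ as a formal fraction). Once the divisibility of the relevant operators by $c$ is in hand, each of (1)–(3) is a one-line manipulation, and the proof can be given in just a few lines, with a pointer back to \Cref{prop:hnu-formula} for the reader.
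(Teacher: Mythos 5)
Your proposal is correct, and it fleshes out what the paper leaves implicit: the paper's own ``proof'' of this proposition is a one-line remark that the claims are ``analogous to those proved in \Cref{subsec:hnu}, and are easy to verify,'' and the earlier $\hnu$-version is in turn dismissed with ``(1), (2) are immediate from the definition, and (3) follows from (2).'' Your computations for (1) and (2) are exactly the intended manipulations: rewrite via $c\,\nu_\alpha = \id - \sigma_\alpha$, use $\sigma_\alpha^2 = \id$ and $\sigma_\alpha(c) = -c$, and cancel $c$ using that it is a non-zero-divisor on the free module $A_\alpha^{\otimes r}$ (which is precisely what the preceding divisibility proposition licenses).

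The one place where you diverge slightly from the most natural reading of the paper's ``(3) follows from (2)'' is in part (3). The paper likely intends the anticommutation trick: from $\sigma_\alpha\nu_\alpha = \nu_\alpha$ and $\nu_\alpha\sigma_\alpha = -\nu_\alpha$ one gets $\nu_\alpha^2 = \nu_\alpha\sigma_\alpha\nu_\alpha = -\nu_\alpha^2$, hence $2\nu_\alpha^2 = 0$, and then $\nu_\alpha^2 = 0$ since the module is $\Z$-torsion-free. Your eigenspace argument --- the image of $\nu_\alpha$ lies in the fixed subspace of $\sigma_\alpha$, and $\nu_\alpha$ annihilates that subspace because $c\,\nu_\alpha(v) = v - \sigma_\alpha(v) = 0$ there --- reaches the same conclusion but uses injectivity of multiplication by $c$ rather than by $2$. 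This is a mild improvement: it works verbatim in any characteristic, including $2$, and so transfers without change to the $\F_2$ specializations that motivate the construction, whereas the $2\nu^2 = 0$ argument would need an extra word there.

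One small point worth stating explicitly if you write this up: in part (1), after expanding $(\id - \sigma_\alpha)(x\otimes y) = (\id-\sigma_\alpha)(x)\otimes y + \sigma_\alpha(x)\otimes(\id-\sigma_\alpha)(y)$, each summand is separately divisible by $c$ because the divisibility lemma applies on each tensor factor individually; this is what legitimizes dividing term by term rather than only dividing the total.
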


\begin{prop}
    $\nu_\alpha$ commutes with the Frobenius algebra operations on $A_\alpha$.
\end{prop}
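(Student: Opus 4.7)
The plan is to adapt the proof of \Cref{prop:hnu-and-A} almost verbatim. First I would verify that the involution $\sigma_\alpha$ intertwines directly with all four Frobenius operations on $(R_\alpha, A_\alpha)$:
\begin{gather*}
    m \circ (\sigma_\alpha \otimes \sigma_\alpha) = \sigma_\alpha \circ m, \quad
    \iota \circ \sigma_\alpha = \sigma_\alpha \circ \iota, \\
    \Delta \circ \sigma_\alpha = (\sigma_\alpha \otimes \sigma_\alpha) \circ \Delta, \quad
    \epsilon \circ \sigma_\alpha = \sigma_\alpha \circ \epsilon.
\end{gather*}
In contrast to \Cref{prop:sigma-ops,prop:hsigma-ops}, no sign twists appear here, because $\sigma_\alpha$ transposes the two roots $\alpha_1,\alpha_2$ of $(X-\alpha_1)(X-\alpha_2)$ but fixes $X$, and both $h = \alpha_1+\alpha_2$ and $t = -\alpha_1\alpha_2$ are invariant. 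The displayed relations then follow by inspection of the defining formulas for $m,\iota,\Delta,\epsilon$ on the basis $\{1,X\}$.

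Next, writing $\nu_\alpha = (\id - \sigma_\alpha)/c$ with $c = \alpha_2 - \alpha_1$, I would deduce each intertwining relation for $\nu_\alpha$ from the corresponding relation for $\sigma_\alpha$. For instance, for multiplication,
\[
    c \cdot (m \circ \nu_\alpha)(x\otimes y)
    = m\bigl((\id - \sigma_\alpha \otimes \sigma_\alpha)(x\otimes y)\bigr)
    = (\id - \sigma_\alpha)(m(x\otimes y))
    = c \cdot (\nu_\alpha \circ m)(x\otimes y).
\]
Because $A_\alpha^{\otimes r}$ is a free $R_\alpha$-module and $R_\alpha$ is an integral domain with $c \neq 0$, multiplication by $c$ is injective, so I can cancel to obtain $m \circ \nu_\alpha = \nu_\alpha \circ m$. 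The arguments for $\iota$, $\Delta$, and $\epsilon$ are structurally identical, using each of the remaining three intertwining relations in turn.

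The computation is routine; no substantial obstacle is anticipated. The one point worth flagging is that, just as $\hnu$ was only an $R'_{h,t}$-module endomorphism rather than an $R_{h,t}$-module endomorphism, the operation $\nu_\alpha$ is only defined as an endomorphism over the subring of $R_\alpha$ fixed by $\sigma_\alpha$, namely the symmetric polynomials $\Z[\alpha_1+\alpha_2,\alpha_1\alpha_2] = R_{h,t}$. The commutation relations above must therefore be read as identities of $R_{h,t}$-linear maps rather than of $R_\alpha$-linear maps; this is a bookkeeping remark rather than a genuine difficulty.
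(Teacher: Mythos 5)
Your proposal is correct and matches the paper's approach: the paper omits the proof ("easy to verify"), pointing to the analogous argument for $\hnu$ in Proposition 2.5, which likewise derives commutation with $m,\iota,\Delta,\epsilon$ from the involution's intertwining relations by factoring out the scalar denominator. Your more explicit treatment of the $\sigma_\alpha$-intertwining, the cancellation of $c=\alpha_2-\alpha_1$ via integral-domain/freeness, and the observation that $\nu_\alpha$ is only linear over the $\sigma_\alpha$-fixed subring $\Z[\alpha_1+\alpha_2,\alpha_1\alpha_2]\cong R_{h,t}$ are all accurate expansions of the paper's terse argument.
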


\begin{prop}
\label{prop:nu-alpha-acyclic}
    $\nu_\alpha \circ \bar{X_1} - \bar{X_2} \circ \nu_\alpha 
    = -\nu_\alpha \circ \bar{X_2} + \bar{X_1} \circ \nu_\alpha 
    = \id$.
\end{prop}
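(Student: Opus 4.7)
The plan is to transcribe the proof of Proposition~\ref{prop:hnu-acyclic} almost verbatim, replacing $(\hsigma, \hnu, \bar{X}, \bar{Y}, h)$ with $(\sigma_\alpha, \nu_\alpha, \bar{X_1}, \bar{X_2}, \alpha_2 - \alpha_1)$ throughout.

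First, I will extract from $\nu_\alpha = (\id - \sigma_\alpha)/(\alpha_2 - \alpha_1)$ a Leibniz-type identity for the internal product within a single tensor factor,
\[
\nu_\alpha(ab) = \nu_\alpha(a)\,b + \sigma_\alpha(a)\,\nu_\alpha(b),
\]
obtained by adding and subtracting $a\,\sigma_\alpha(b)$ in the numerator of $(\id - \sigma_\alpha)(ab) = ab - \sigma_\alpha(a)\sigma_\alpha(b)$ and dividing by $\alpha_2 - \alpha_1$. Combined with the tensor derivation formula $\nu_\alpha(x_1 \otimes y) = \nu_\alpha(x_1)\otimes y + \sigma_\alpha(x_1)\otimes \nu_\alpha(y)$ (the analog of Proposition~\ref{prop:hnu-formula}(1) in the $\alpha$-setting) and the values $\nu_\alpha(X_1) = 1$, $\sigma_\alpha(X_1) = X_2$ read off directly from the definitions, all ingredients are in place.

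Then, for $x = x_1 \otimes y \in A_\alpha \otimes A_\alpha^{\otimes(r-1)}$, I compute
\begin{align*}
\nu_\alpha\bigl(\bar{X_1}(x)\bigr)
&= \nu_\alpha(X_1 x_1)\otimes y + \sigma_\alpha(X_1 x_1)\otimes \nu_\alpha(y) \\
&= \bigl(x_1 + X_2\,\nu_\alpha(x_1)\bigr)\otimes y + X_2\,\sigma_\alpha(x_1)\otimes \nu_\alpha(y) \\
&= x + \bar{X_2}\bigl(\nu_\alpha(x)\bigr),
\end{align*}
yielding the first equation $\nu_\alpha \circ \bar{X_1} - \bar{X_2}\circ\nu_\alpha = \id$. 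The second equation follows by the symmetric calculation, interchanging the roles of $X_1$ and $X_2$ and using $\nu_\alpha(X_2) = -1$, $\sigma_\alpha(X_2) = X_1$.

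I do not anticipate any real obstacle: the algebra is a mechanical translation of the proof of Proposition~\ref{prop:hnu-acyclic}, with only the signs arising from $\nu_\alpha(X_1) - \nu_\alpha(X_2) = 2$ to be tracked carefully. Conceptually, the inclusion $s\colon (R_h, A_h) \hookrightarrow (R_\alpha, A_\alpha)$ of involutive Frobenius extensions intertwines $(\hsigma, \hnu, \bar{X}, \bar{Y})$ with $(\sigma_\alpha, \nu_\alpha, \bar{X_1}, \bar{X_2})$, so the present proposition is the direct pullback of Proposition~\ref{prop:hnu-acyclic} along $s$; the direct computation above amounts to verifying that this pullback extends compatibly to all of $A_\alpha^{\otimes r}$ via the $\sigma_\alpha$-twisted semilinearity of $\nu_\alpha$ over $R_\alpha$.
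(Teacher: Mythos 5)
Your proposal is correct and is exactly the verbatim translation of the proof of \Cref{prop:hnu-acyclic} that the paper intends when it declares the result ``analogous\dots and easy to verify.'' One point worth flagging: the paper's displayed values $\nu_\alpha(X_1) = -1$, $\nu_\alpha(X_2) = 1$ are a typo (they contradict the adjacent $\nu_\alpha(\alpha_1) = -1$, $\nu_\alpha(\alpha_2) = 1$, $\nu_\alpha(X) = 0$, which give $\nu_\alpha(X_i) = \nu_\alpha(X) - \nu_\alpha(\alpha_i)$); the values $\nu_\alpha(X_1) = 1$, $\nu_\alpha(X_2) = -1$ that you use are the correct ones, and they are precisely what is needed for the proposition to hold as stated.
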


\begin{prop}
    If $D \neq \emptyset$, the complex $(\Kh_\alpha(D), \hnu)$ is acyclic.
\end{prop}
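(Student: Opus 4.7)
The plan is to mirror the acyclicity argument that was used for $(\Kh_h(D), \hnu)$ at the end of Section 2.2, replacing the pair $(\Xbar, \Ybar)$ with the pair $(\bar{X_1}, \bar{X_2})$ provided by the roots $\alpha_1, \alpha_2$. I read the differential in the statement as $\nu_\alpha$ (the operation just introduced in the $U(1)\times U(1)$ setting), since $\hnu$ was only defined over $(R_{h,t}, A_{h,t})$ and was shown in \Cref{rem:hnu-over-U1xU1} not to extend to $(R_\alpha, A_\alpha)$.

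First I would fix a basepoint $p$ on $D$. This makes $\bar{X_1}$ and $\bar{X_2}$ well defined endomorphisms on $\CKh_\alpha(D)$: they multiply the tensor factor corresponding to the circle through $p$ by $X_1 = X - \alpha_1$ and $X_2 = X - \alpha_2$ respectively. Because these are Frobenius algebra endomorphisms acting on a distinguished factor, they descend to endomorphisms on $\Kh_\alpha(D)$, and they commute with the chain-level Khovanov differential. Next, since $\nu_\alpha$ commutes with the Frobenius operations on $A_\alpha$ (as already proved) and satisfies $\nu_\alpha^2 = 0$, it induces a differential on the bigraded $\Z$-module $\Kh_\alpha(D)$.

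The key step is then an arrow-by-arrow copy of the proof of the $\hnu$-acyclicity. For each homological grading $i$, I would arrange the bigraded pieces of $\Kh_\alpha(D)$ into a doubly infinite sequence
\[
\begin{tikzcd}
\cdots \arrow[r, "-\bar{X_2}"', shift right] & {\Kh_\alpha^{i,j-2}(D)} \arrow[r, "\bar{X_1}"', shift right] \arrow[l, "\nu_\alpha"', shift right] & {\Kh_\alpha^{i,j}(D)} \arrow[l, "\nu_\alpha"', shift right] \arrow[r, "-\bar{X_2}"', shift right] & {\Kh_\alpha^{i,j+2}(D)} \arrow[l, "\nu_\alpha"', shift right] \arrow[r, "\bar{X_1}"', shift right] & \cdots \arrow[l, "\nu_\alpha"', shift right]
\end{tikzcd}
\]
and appeal to \Cref{prop:nu-alpha-acyclic}, which gives the two identities $\nu_\alpha \bar{X_1} - \bar{X_2}\nu_\alpha = \id$ and $\bar{X_1}\nu_\alpha - \nu_\alpha \bar{X_2} = \id$. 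Together these exhibit the alternating composition $(\bar{X_1}, -\bar{X_2}, \bar{X_1}, -\bar{X_2}, \dots)$ as a null-homotopy of the identity with respect to $\nu_\alpha$, so the complex is acyclic.

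I do not expect any real obstacle here; the only subtle point is the choice of signs needed to make the chain-homotopy identity go through uniformly in both directions, which is exactly handled by the second equality in \Cref{prop:nu-alpha-acyclic}. The non-emptiness hypothesis $D \neq \emptyset$ is used precisely to guarantee that a basepoint exists, so that $\bar{X_1}, \bar{X_2}$ are defined.
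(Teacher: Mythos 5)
Your proof is correct and follows exactly the approach the paper intends: fix a basepoint so that $\bar{X_1}, \bar{X_2}$ are defined, and read off the null-homotopy of the identity from \Cref{prop:nu-alpha-acyclic}, mirroring the argument already carried out for $\hnu$ and $(\Xbar, \Ybar)$ in the $U(1)$-equivariant case. You also correctly spotted that the differential in the statement should be read as $\nu_\alpha$ rather than $\hnu$, since the latter does not extend over $(R_\alpha, A_\alpha)$.
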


For a pointed link diagram $D$, we may define the reduced complex $\CKh_\alpha(D)$ as either one of the subcomplexes $C_i(D)$ ($i = 1, 2$) of the unreduced complex $C = C_\alpha(D)$ in the obvious way. From \Cref{prop:nu-alpha-acyclic}, one sees that a proposition analogous to \Cref{prop:red-ses} holds, and we obtain the following. 

\begin{thm}
\label{thm:split-U1xU1}
    The unreduced $U(1) \times U(1)$-equivariant Khovanov complex $\CKh_\alpha(D)$ splits into the direct sum of two copies of the reduced complex $\rCKh_\alpha(D)$ over $\Z$.  
\end{thm}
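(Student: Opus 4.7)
The plan is to run the exact analogue of the argument from \Cref{thm:split}, with the pair $(\sigma_\alpha, \nu_\alpha)$ and the operators $\bar X_1, \bar X_2$ playing the roles of $(\hsigma, \hnu)$ and $\bar X, \bar Y$, respectively. First I would fix a basepoint on $D$, and for $i = 1, 2$ let $C_i \subset C := \CKh_\alpha(D)$ denote the subcomplex generated by enhanced states whose distinguished tensor factor (the one containing the basepoint) equals $X_i := X - \alpha_i$. Since $R_\alpha \cdot X_i$ is a free rank-one $R_\alpha$-submodule of $A_\alpha$, each $C_i$ is canonically identified with the reduced complex $\rCKh_\alpha(D)$.

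The key algebraic input is the factorization $X_1 X_2 = (X-\alpha_1)(X-\alpha_2) = 0$ in $A_\alpha$, which is the $U(1)\times U(1)$-analogue of $X\cdot Y = 0$ used implicitly in the proof of \Cref{prop:red-ses}. It implies $\bar X_2(C_1) = 0$, and a short computation in the $R_\alpha$-basis $\{1, X_1\}$ of the distinguished factor shows that $\bar X_2$ surjects onto $C_2$ with kernel exactly $C_1$. This gives the short exact sequence of $R_\alpha$-chain complexes
\[
    0 \longrightarrow C_1 \hookrightarrow C \xrightarrow{\ \bar X_2\ } C_2 \longrightarrow 0,
\]
and symmetrically a second sequence obtained by swapping the indices $1 \leftrightarrow 2$.

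To split the sequence I would show that $-\nu_\alpha|_{C_2}$ is a chain-map section of $\bar X_2$. Rewriting the first identity of \Cref{prop:nu-alpha-acyclic} as $\bar X_2 \circ \nu_\alpha = \nu_\alpha \circ \bar X_1 - \id$ and evaluating on $\underline{X_2}\otimes y \in C_2$, the leading term vanishes because $X_1 X_2 = 0$, leaving $\bar X_2 \nu_\alpha(\underline{X_2}\otimes y) = -\underline{X_2}\otimes y$. Since $\nu_\alpha$ commutes with the Frobenius operations of $A_\alpha$, and hence with the Khovanov differential, $-\nu_\alpha$ is a chain-map section, and the splitting yields $C \cong C_1 \oplus C_2 \cong \rCKh_\alpha(D) \oplus \rCKh_\alpha(D)$.

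No significant obstacle is expected, as every step parallels \Cref{prop:red-ses} and \Cref{thm:split}. The one point worth emphasizing is the improvement in the ground ring over which the splitting is valid: whereas $\hnu$ is only $R'_h = \Z[h^2]$-linear (because $h \notin R_h^{\hsigma}$), the operator $\nu_\alpha$ is linear over the entire $\sigma_\alpha$-fixed subring $\Z[\alpha_1+\alpha_2,\, \alpha_1\alpha_2] \cong R_{h,t}$, which of course contains $\Z$. This is precisely why the splitting of $\CKh_\alpha(D)$ can be stated integrally.
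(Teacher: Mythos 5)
Your proof is correct and follows the same route the paper takes: split $\CKh_\alpha(D)$ via the short exact sequence $0 \to C_1 \to C \xrightarrow{\bar{X}_2} C_2 \to 0$ (and its mirror) and use the identity of \Cref{prop:nu-alpha-acyclic} to verify that $-\nu_\alpha|_{C_2}$ is a chain-map section, exactly as in \Cref{prop:red-ses}. Your closing observation is also the right one, and in fact slightly sharper than the statement of the theorem: since $\nu_\alpha$ is linear over the full $\sigma_\alpha$-fixed subring $\Z[\alpha_1+\alpha_2,\alpha_1\alpha_2] \cong R_{h,t}$, the splitting holds over that ring, not merely over $\Z$.
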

\section{Rasmussen invariant and homological generators}
\label{sec:rasmussen}

\subsection{Lee classes}
\label{subsec:lee-class}

Hereafter, we consider the $U(1)$-equivariant theory, for the Frobenius extension $(R_h, A_h)$. (An analogous argument holds for the $U(1) \times U(1)$-equivariant theory, using $\sigma_\alpha, \nu_\alpha$ defined in \Cref{subsec:U1xU1-equiv} in place of $\hsigma, \hnu$, and $X_1, X_2$ in place of $X, Y$.) Recall that the \textit{Lee cycle} $\ca(D)$ of a link diagram $D$ is a cycle in $\CKh_h(D)$ obtained from a specific coloring of the Seifert circles of $D$ by $X$ or $Y = X - h$. For instance, \Cref{fig:lee-b} depicts the Lee cycle for the diagram of \Cref{fig:lee-a} (see \cite[Definition 2.8]{Sano:2020} for a precise definition). The cycle can be interpreted as a dotted cobordism from $\emptyset$ to the Seifert resolution $D_0$ of $D$, consisting of a cup for each Seifert circle decorated with $\bdot$ or $\hdot$ (\Cref{fig:lee-c}). 
Let $\cb(D)$ denote the Lee cycle for the orientation reversed diagram $-D$ of $D$. Observe that $\cb(D)$ can be obtained from $\ca(D)$ by swapping the labels $X, Y$, so we have 
\[
    \cb(D) = \hsigma(\ca(D)). 
\]

\begin{figure}[t]
    \centering
    \begin{subfigure}[t]{0.3\linewidth}
        \centering
        \tikzset{every picture/.style={line width=0.75pt}} 

\begin{tikzpicture}[x=0.75pt,y=0.75pt,yscale=-1,xscale=1]

\clip (0,0) rectangle + (100, 100);

\draw [line width=1.5]    (70.58,41.39) .. controls (63.5,-18) and (4,25.5) .. (42.36,77.01) ;
\draw [line width=1.5]    (53.15,86.58) .. controls (94,115) and (123,32) .. (35.5,45.5) ;
\draw [line width=1.5]    (25.13,46.95) .. controls (-12.3,54.35) and (15.68,141.29) .. (69.06,54.44) ;
\draw    (54.5,13) -- (47.5,13) ;
\draw [shift={(45.5,13)}, rotate = 360] [color={rgb, 255:red, 0; green, 0; blue, 0 }  ][line width=0.75]    (10.93,-4.9) .. controls (6.95,-2.3) and (3.31,-0.67) .. (0,0) .. controls (3.31,0.67) and (6.95,2.3) .. (10.93,4.9)   ;

\end{tikzpicture}
        \caption{}
        \label{fig:lee-a}
    \end{subfigure}
    \begin{subfigure}[t]{0.3\linewidth}
        \centering
        \tikzset{every picture/.style={line width=0.75pt}} 

\begin{tikzpicture}[x=0.75pt,y=0.75pt,yscale=-1,xscale=1]

\clip (0,0) rectangle + (100, 100);

\draw  [color={rgb, 255:red, 208; green, 2; blue, 27 }  ,draw opacity=1 ][line width=1.5]  (49.5,13) .. controls (62.5,12) and (67.5,20) .. (69.5,37) .. controls (71.5,54) and (79.27,44.04) .. (89.5,55) .. controls (99.73,65.96) and (98.5,74) .. (96.5,83) .. controls (94.5,92) and (82.5,100) .. (65.5,91) .. controls (48.5,82) and (55.5,83) .. (40.5,92) .. controls (25.5,101) and (16.5,91) .. (10.5,84) .. controls (4.5,77) and (5.5,62) .. (12.5,57) .. controls (19.5,52) and (30.5,52) .. (32.5,37) .. controls (34.5,22) and (36.5,14) .. (49.5,13) -- cycle ;
\draw  [color={rgb, 255:red, 0; green, 116; blue, 255 }  ,draw opacity=1 ][line width=1.5]  (49.5,44) .. controls (61.5,44) and (75,41) .. (68.5,63) .. controls (62,85) and (45,87) .. (35,67) .. controls (25,47) and (37.5,44) .. (49.5,44) -- cycle ;

\draw (72,5.4) node [anchor=north west][inner sep=0.75pt]    {$\textcolor[rgb]{0.82,0.01,0.11}{X}$};
\draw (70.5,58.4) node [anchor=north west][inner sep=0.75pt]  [color={rgb, 255:red, 0; green, 117; blue, 255 }  ,opacity=1 ]  {$Y$};

\end{tikzpicture}
        \caption{}
        \label{fig:lee-b}
    \end{subfigure}
    \begin{subfigure}[t]{0.35\linewidth}
        \centering
        \tikzset{every picture/.style={line width=0.75pt}} 

\begin{tikzpicture}[x=0.75pt,y=0.75pt,yscale=-1,xscale=1]

\clip (0,0) rectangle + (150, 100);

\draw [color={rgb, 255:red, 208; green, 2; blue, 27 }  ,draw opacity=1 ][line width=1.5]    (98.22,13.41) .. controls (107.84,12.45) and (111.54,20.08) .. (113.02,36.28) .. controls (114.5,52.49) and (120.25,43) .. (127.82,53.44) .. controls (135.38,63.88) and (134.47,71.55) .. (132.99,80.13) .. controls (131.51,88.71) and (128.99,89.47) .. (124.14,90.98) ;
\draw [color={rgb, 255:red, 0; green, 116; blue, 255 }  ,draw opacity=1 ][line width=1.5]    (98.22,42.96) .. controls (107.1,42.96) and (117.09,40.1) .. (112.28,61.07) .. controls (107.47,82.03) and (104.49,78.54) .. (99.21,78.03) ;
\draw [color={rgb, 255:red, 208; green, 2; blue, 27 }  ,draw opacity=1 ][line width=1.5]    (98.22,13.41) .. controls (71.82,13.41) and (9,1.67) .. (9,56) .. controls (9,110.33) and (96.96,90.49) .. (124.14,90.98) ;
\draw  [fill={rgb, 255:red, 0; green, 0; blue, 0 }  ,fill opacity=1 ] (26.3,58.45) .. controls (26.3,56.76) and (27.67,55.39) .. (29.35,55.39) .. controls (31.04,55.39) and (32.41,56.76) .. (32.41,58.45) .. controls (32.41,60.13) and (31.04,61.5) .. (29.35,61.5) .. controls (27.67,61.5) and (26.3,60.13) .. (26.3,58.45) -- cycle ;
\draw [color={rgb, 255:red, 208; green, 2; blue, 27 }  ,draw opacity=1 ][line width=1.5]  [dash pattern={on 1.69pt off 2.76pt}]  (124.14,90.98) .. controls (108.4,90.99) and (108.4,81.84) .. (102.3,82.23) .. controls (96.2,82.61) and (91.24,87.94) .. (91.57,88.71) .. controls (91.89,89.47) and (75.23,88.71) .. (69.89,81.84) .. controls (64.55,74.98) and (65.67,60.11) .. (70.85,55.35) .. controls (76.03,50.58) and (84.17,50.58) .. (85.65,36.28) .. controls (87.13,21.99) and (88.28,16.13) .. (95.31,13.97) ;
\draw [color={rgb, 255:red, 0; green, 116; blue, 255 }  ,draw opacity=1 ][line width=1.5]  [dash pattern={on 1.69pt off 2.76pt}]  (100.46,78.03) .. controls (95.91,78.45) and (91.02,70.52) .. (87.43,61.26) .. controls (83.83,51.99) and (87.65,43.7) .. (95.54,43.06) ;
\draw [color={rgb, 255:red, 74; green, 144; blue, 226 }  ,draw opacity=1 ][fill={rgb, 255:red, 255; green, 255; blue, 255 }  ,fill opacity=0.8 ][line width=1.5]    (98.22,42.96) .. controls (71.82,42.96) and (49.78,48.32) .. (49.3,59.99) .. controls (48.82,71.67) and (65.84,72.77) .. (100.46,78.03) ;
\draw  [color={rgb, 255:red, 0; green, 0; blue, 0 }  ,draw opacity=1 ][fill={rgb, 255:red, 255; green, 255; blue, 255 }  ,fill opacity=1 ] (57.92,60.71) .. controls (57.92,59.02) and (59.28,57.66) .. (60.97,57.66) .. controls (62.66,57.66) and (64.02,59.02) .. (64.02,60.71) .. controls (64.02,62.4) and (62.66,63.76) .. (60.97,63.76) .. controls (59.28,63.76) and (57.92,62.4) .. (57.92,60.71) -- cycle ;

\draw (117.65,6.76) node [anchor=north west][inner sep=0.75pt]    {$\textcolor[rgb]{0.82,0.01,0.11}{X}$};
\draw (113.91,63.95) node [anchor=north west][inner sep=0.75pt]  [color={rgb, 255:red, 0; green, 117; blue, 255 }  ,opacity=1 ]  {$Y$};

\end{tikzpicture}
        \caption{}
        \label{fig:lee-c}
    \end{subfigure}
    \caption{The Lee cycle $\ca(D)$ of a diagram $D$.}
    \label{fig:lee-cycle-cob}    
\end{figure}
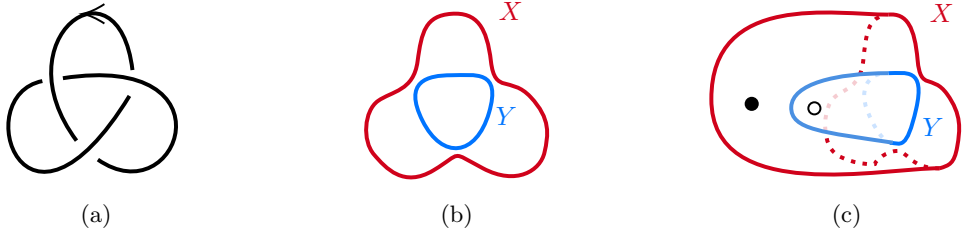

The Lee classes were originally defined by Lee in \cite{Lee:2005} for $\Q$-Lee homology; for an $\ell$-component link diagram $D$, there are $2^\ell$ Lee classes, one defined for each orientation $o$ on $D$, and the $\Q$-Lee homology of $D$ is freely generated by them. This result was extended to $\F_2$-Bar-Natan homology by Turner in \cite{Turner:2006}, and in general by Mackaay, Turner and Vaz in \cite{MTV:2007} for any link homology obtained from a rank 2 Frobenius algebra $A = R[X]/(X^2 - hX - t)$ over a commutative ring $R$ whose defining quadratic polynomial $X^2 - hX - t$ factorizes as $(X - a_1)(X - a_2)$ over $R$ and the difference of the two roots $a_2 - a_1$ is invertible in $R$ (see \cite[Proposition 2.3]{MTV:2007} or \cite[Theorem 1]{Turner:2020}). In general, the Lee classes can be defined whenever $X^2 - hX - t$ factorizes as $(X - a_1)(X - a_2)$, but does not necessarily generate the homology unless $a_2 - a_1$ is invertible. 
In particular, Plamenevskaya's invariant $\psi(L)$ of a transverse link $L$ is the Lee class in $\Kh_0(L)$ for the special case $a_1 = a_2 = 0$, which may be trivial in the homology group \cite{Plamenevskaya:2006}. 

In \cite{Sano:2020-b}, it is proved that Lee classes can be used to fix the sign indeterminacy of Khovanov homology and its equivariant versions.\footnote{
    Up to sign functoriality of Khovanov homology was first proved by Jacobsson~\cite{Jacobsson:2002} and subsequently by Bar-Natan~\cite{BarNatan:2005} in a more general framework. The sign indeterminacy was fixed in \cite{Cap:2007,CMW:2009,Blanc:2010,Beliakova:2019,Vog:2020} under various extensions of the theory. 
}
With the signs adjusted accordingly, we have the following propositions. Here, for a link diagram $D$, $w(D)$ denotes the writhe, $r(D)$ is the number of Seifert circles of $D$. For an unary function $f$, $\delta f$ denotes the difference $\delta f(x, y) := f(y) - f(x)$. 

\begin{prop}[{\cite[Proposition 3.1]{Sano:2020-b}}]
\label{prop:R-map-lee-class}
    Let $D, D'$ be link diagrams related by a Reidemeister move. Then the corresponding isomorphism between the homology groups
    \[
        \phi\colon \Kh_h(D) \to \Kh_h(D')
    \]
    maps the Lee class of $D$ to that of $D'$ multiplied by some power of $h$, 
    \[
        [\ca(D)] \mapsto h^{j} [\ca(D')]
    \]
    where the exponent $j \in \{0, \pm 1\}$ is given by the following formula
    \[
        j = \frac{\delta w(D, D') - \delta r(D, D')}{2}.
    \]
\end{prop}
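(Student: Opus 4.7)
The plan is to verify the formula by a case analysis on the type of Reidemeister move, applying Bar-Natan's explicit chain homotopy equivalences from \cite{BarNatan:2005}. First, I recall the key structural fact about the Lee cycle: $\ca(D)$ is supported on the Seifert resolution vertex $o_D \in \{0,1\}^{n(D)}$ of the cube of resolutions, and at that vertex equals a pure tensor of elements from $\{X, Y\} \subset A_h$, one per Seifert circle, with labels prescribed by a parity rule coming from the orientation. Because both sides of $j = (\delta w - \delta r)/2$ are additive under composition of diagram moves, it suffices to verify the formula on each elementary Reidemeister move (with each possible local orientation).

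For R1, both the positive and negative kink variants introduce a single small Seifert circle, so $|\delta r| = 1$, while $\delta w = \pm 1$ according to the sign of the kink; the four sign combinations give $j \in \{0, \pm 1\}$. Bar-Natan's R1 map splits off the small circle: depending on the direction of the move and the sign of the kink, the relevant component is an inclusion $x \mapsto x \otimes X$, $x \mapsto x \otimes Y$, a counit-type projection using $\epsilon(X)=1$ or $\epsilon(Y)=1$, or a cone summand carrying an explicit factor of $h$. A direct computation shows that the small Seifert circle carries exactly the $X$ or $Y$ label matched by the splitting, and the resulting scalar is $h^j$ with the claimed exponent.

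For R2, the two crossings have opposite signs, so $\delta w = 0$; the Seifert circle count satisfies $\delta r \in \{0, \pm 2\}$ depending on whether the strands are oriented coherently or oppositely, giving $j \in \{0, \pm 1\}$. Bar-Natan's R2 chain homotopy equivalence identifies the two-crossing complex with a direct summand isomorphic to the zero-crossing complex plus an acyclic piece; on the Seifert vertex the map decomposes into saddle cobordisms whose TQFT images are multiplication/comultiplication applied to the basis vectors $\{1, X, Y\}$. One verifies that the label of each Seifert circle is preserved on the side where the circle persists, and the $h$-factor arises precisely when a saddle merges two circles with labels $X$ and $Y$, producing $m(X \otimes Y) = t = 0$ in $A_h$ but $m(X \otimes Y) + h \cdot (\text{correction})$ in the twisted form; careful tracking yields $h^j$ with $j = (\delta w - \delta r)/2$. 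For R3, the move is braid-like, so $\delta w = 0$ and $\delta r = 0$, hence $j = 0$. Since R3 can be decomposed as a composition of R2 moves via the Kauffman trick, and each R2 contribution cancels due to $\delta w = 0 = \delta r$ overall, we obtain $\phi([\ca(D)]) = [\ca(D')]$.

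The main obstacle will be sign bookkeeping: the Koszul conventions in the cube of resolutions introduce signs that depend on edge-orderings, and the Frobenius operations $\Delta$ and $\epsilon$ on $A_h$ interact with these signs in subtle ways when applied to mixed tensors of $X$'s and $Y$'s. This is best handled by first fixing a normalization of $\ca(D)$ (as in \cite{Sano:2020}) and then verifying that the overall sign of $\phi([\ca(D)])$ agrees with $[\ca(D')]$ in each elementary case, exploiting additivity to avoid a combinatorial explosion. Once signs are controlled, the exponent of $h$ is forced by degree counting: the Lee class has a predictable quantum degree in terms of $w(D)$ and $r(D)$, so the map $\phi$, which is grading-preserving, can only shift $\ca(D)$ by multiplication by $h^j$ with $j = (\delta w - \delta r)/2$.
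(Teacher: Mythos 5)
Your overall strategy---case analysis over elementary Reidemeister moves using Bar-Natan's explicit homotopy equivalences, then appeal to additivity of $j$---is the right one and is essentially how the cited reference \cite{Sano:2020-b} proceeds. However, two of your three cases contain genuine errors that would derail the proof.

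\textbf{The R2 case is confused.} You write that ``the $h$-factor arises precisely when a saddle merges two circles with labels $X$ and $Y$, producing $m(X\otimes Y) = t = 0$ in $A_h$ but $m(X\otimes Y) + h\cdot(\text{correction})$ in the twisted form.'' There is no ``twisted form'' in the $U(1)$-equivariant theory $(R_h, A_h)$; the multiplication is simply $m(X\otimes Y) = XY = 0$. If a component of Bar-Natan's R2 chain map literally applied $m$ to $X\otimes Y$ from the Lee cycle, that component would vanish. The actual reason the Lee class survives is more structural: the R2 homotopy equivalence is a deformation retraction obtained by Gaussian elimination, and on the persisting summand it is essentially the identity, not a saddle. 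Whether the Seifert resolution of $D'$ lies in the persisting summand or in an eliminated one depends on the local orientation (coherent vs.\ anti-parallel), and the $h$ powers come from tracking the composite of the retraction with the section, not from a single merge. You would need to write out the retraction/section explicitly for both orientations, keeping track of where the small bigon circle (when present) sits and what its Lee label must be, rather than invoke an ad hoc correction term.

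\textbf{The R3 case is not correct as stated.} You claim ``R3 can be decomposed as a composition of R2 moves via the Kauffman trick,'' but an R3 move on a link diagram is \emph{not} a composite of R2 moves---no such decomposition of diagrams exists. What the Kauffman trick (as implemented by Bar-Natan) does is construct the R3 chain homotopy equivalence from the R2 equivalence by a categorical argument inside the cube of resolutions; it does not factor the diagrammatic move. So you cannot conclude $j=0$ for R3 by ``cancellation of R2 contributions.'' You do have the right numerology ($\delta w = \delta r = 0$ for R3, hence $j=0$), but you still must verify that $\phi[\ca(D)] = [\ca(D')]$ with unit coefficient by tracing the Lee cycle through Bar-Natan's explicit R3 map, or by reducing carefully to the R2 computation at the level of the deformation retract.

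\textbf{A smaller caveat on the degree-counting closing argument.} Once one knows $\phi[\ca(D)]$ is a unit times $h^j[\ca(D')]$ for some $j$, the grading forces $j = (\delta w - \delta r)/2$; that part is fine. But degree counting alone does not establish that the image has this form in the first place. The homology group in a fixed bidegree may contain torsion or other classes, so the nontrivial content is precisely the claim that the image is (a unit multiple of) a power of $h$ times the target Lee class. That claim requires the explicit R-move computations you sketch, so the degree argument can only serve as a consistency check, not as a shortcut. Your R1 sketch is plausible (the key point being $\epsilon(X)=\epsilon(Y)=1$ together with the grading shift), but the R2 and R3 cases as currently formulated would not close the proof.
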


\begin{prop}[{\cite[Proposition 3.4]{Sano:2020-b}}]
\label{prop:cob-map-lee-class}
    Let $S$ be a link cobordism represented as a sequence of movies between two non-empty link diagrams $D, D'$. Further suppose that every component of $S$ has boundary in $D$. The corresponding cobordism map between the homology groups (modulo torsion)
    \[
        \phi_S\colon \Kh_h(D) / \Tor \to \Kh_h(D') / \Tor
    \]
    maps the Lee class of $D$ to that of $D'$ multiplied by some power of $h$, 
    \[
        [\ca(D)] \mapsto h^{j} [\ca(D')]
    \]
    where the exponent $j \in \Z$ is given by the following formula
    \[
        j = \frac{\delta w(D, D') - \delta r(D, D') - \chi(S)}{2}.
    \]
\end{prop}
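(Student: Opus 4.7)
The plan is to decompose $S$ into elementary Reidemeister and Morse moves (births, deaths, saddles), verify the formula on each elementary piece, and then invoke additivity of $\delta w$, $\delta r$, and $\chi(S)$ under composition. For the Reidemeister pieces, $\chi = 0$ and \Cref{prop:R-map-lee-class} immediately yields $j = (\delta w - \delta r)/2$. For the Morse pieces, direct computation in $A_h$ (with $t = 0$) handles the remaining cases: a death applies $\epsilon$, and since $\epsilon(X) = \epsilon(Y) = 1$ no factor of $h$ appears, matching $j = 0$; a split saddle applies $\Delta$ with $\Delta(X) = X \otimes X$ and $\Delta(Y) = Y \otimes Y$ (using $t = 0$), so again $j = 0$; a merge saddle applies $m$ with $m(X \otimes X) = X^2 = hX$ and $m(Y \otimes Y) = -hY$, producing exactly one factor of $h$, matching $j = 1$.

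The delicate case is the birth, where $\iota(1) = 1$ creates an undotted new factor while $\ca(D')$ requires $X$ or $Y$ on the new circle; in isolation a birth does not send the Lee cycle to a multiple of a single Lee class. The hypothesis that every component of $S$ has boundary in $D$ is precisely what excludes isolated birth components and guarantees that each birth in the decomposition is eventually attached, via a saddle, to an existing component whose boundary meets $D$. Grouping each birth with its connecting saddle into a compound ``birth-plus-merge'' move restores the formula: the compound piece has $\chi = 0$, $\delta w = 0$, $\delta r = 0$, so the formula predicts $j = 0$, and indeed after the birth adds $\otimes 1$ to the Lee cycle, the subsequent merge computes $m(X \otimes 1) = X$ (or $m(Y \otimes 1) = Y$), so the Lee cycle is preserved exactly.

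The main obstacle is the rigorous combinatorial accounting in this birth case, specifically ensuring that every birth in the decomposition can be unambiguously paired with a subsequent saddle that absorbs its newly created circle. A cleaner alternative, bypassing the need for such a pairing, is to represent $\ca(D)$ by the dotted cup cobordism $\Sigma_D \colon \emptyset \to D$ consisting of $r(D)$ disjoint dotted disks, one per Seifert circle, decorated with $\bdot$ or $\hdot$ according to the Seifert coloring, and then to simplify the composite $S \circ \Sigma_D \colon \emptyset \to D'$ directly using the neck-cutting relation, the sphere relations, and the dot identity $X^2 = hX$. The final simplified form is $h^j \Sigma_{D'}$, and the exponent $j$ can be read off from the Euler characteristic of the composite, namely $\chi(S \circ \Sigma_D) = \chi(S) + r(D)$, compared with $\chi(\Sigma_{D'}) = r(D')$, together with the writhe-dependent normalization shift arising from identifying the Lee cycles of $\CKh_h(D)$ and $\CKh_h(D')$ as representatives of the same link.
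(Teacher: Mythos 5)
This paper does not itself prove the proposition --- it is quoted from \cite[Proposition 3.4]{Sano:2020-b} --- so there is no internal proof to compare against.

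On the merits: the local computations for deaths, merges, splits, and Reidemeister moves are the right first steps, and you correctly isolate the birth as the obstruction. But the proposed repair, pairing each birth with ``its'' absorbing saddle, has a genuine gap: it implicitly assumes the birth and the absorbing saddle are adjacent frames of the movie. In general they are not; the newly born circle can undergo Reidemeister moves, split, and participate in several later saddles before being reabsorbed. During all of those intermediate frames the class you are following is $\ca(D)\otimes 1$ rather than a Lee cycle (since $1\notin\{X,Y\}$), so neither \Cref{prop:R-map-lee-class} nor your Morse-move analysis applies there, and the additivity of $\delta w$, $\delta r$, $\chi$ gives nothing. Closing this requires either a normalization step --- sliding each birth forward until it abuts an absorbing saddle, justified by movie-move invariance of $\phi_S$ --- or inverting $h$, writing $1=(X-Y)/h$, and proving the ``wrong'' Lee summand is eventually killed; neither is supplied. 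Two smaller loose ends: $m(Y\otimes Y)=-hY$ introduces a sign that must be reconciled with the sign-fixing conventions of \cite{Sano:2020-b}; and $\Delta(X)=X\otimes X$ matches $\ca(D')$ only if both circles produced by a split carry the label $X$ in the Seifert coloring, a geometric compatibility that is not a Frobenius-algebra identity. Your second sketch, via the dotted cup cobordism $\Sigma_D$, is closer in spirit to the actual route taken in \cite{Sano:2020,Sano:2020-b}, but it needs to confront that $\phi_S(\ca(D))$ is a chain spread over the whole cube of resolutions of $D'$, not literally a dotted cobordism into the Seifert resolution of $D'$; reducing it to $h^j\Sigma_{D'}$ is precisely where the ``modulo torsion'' qualifier and the real work reside.
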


In each of the above two formulas describing the exponent $j$, whenever $j < 0$, it is understood that $[\ca(D')]$ is divisible by $h^{-j}$. 
Combining \Cref{prop:cobordism-map-and-hsigma-commute} with \Cref{prop:R-map-lee-class}, we have 
\[
    \phi\colon [\cb(D)] \mapsto (-h)^{j} [\cb(D')],
\]
recovering the second equation proved in \cite[Proposition 3.1]{Sano:2020-b}. The following commutative diagram describes the above equations:
\[
\begin{tikzcd}[column sep=4em]
R_h \arrow[d, "\hsigma"'] \arrow[r, "\ca(D)"] \arrow[rr, "h^j \ca(D')", bend left] & \Kh_h(D) \arrow[d, "\hsigma"] \arrow[r, "\phi"] & \Kh_h(D') \arrow[d, "\hsigma"] \\
R_h \arrow[r, "\cb(D)"] \arrow[rr, "(-h)^j \cb(D')"', bend right] & \Kh_h(D) \arrow[r, "\phi"] & \Kh_h(D')
\end{tikzcd}
\]
A similar equation also holds for the cobordism map $\phi_S$ of \Cref{prop:cob-map-lee-class}. 

\subsection{Rasmussen invariant}

Let $F$ be a field of any characteristic, and consider the $U(1)$-equivariant theory (Bar-Natan's theory) over $F$. The Frobenius extension is given by $R_h \otimes F = F[h]$ and $A^F_h := A_h \otimes F = F[h, X]/(X^2 - hX)$. Rasmussen's \textit{$s$-invariant} over $F$ can be defined in two ways: (i) using the unreduced Bar-Natan homology over $F$, or (ii) the reduced Bar-Natan homology over $F$  (see \cite{Rasmussen:2010,LS:2014,KWZ:2019}). Here, we reprove that the two definitions coincide by relating the homological generators of the unreduced and reduced homologies. 

For a knot $K$, its unreduced Bar-Natan homology over $F$ is known to have the form:
\[
    \Kh_h(K; F) \cong q^{-s - 1} F[h] \oplus q^{-s + 1} F[h] \oplus (\Tor).
\]
It has rank $2$ over the ring $F[h]$; the two generators are concentrated in homological grading $0$, and their quantum gradings differ by $2$. Define $-s^F(K) = -s$ to be the average of the quantum gradings of the two generators. (The negative sign is due to the convention of quantum grading.)
The reduced Bar-Natan homology over $F$ has the form:
\[
    \rKh_h(K; F) \cong q^{-\tilde{s}} F[h] \oplus (\Tor).
\]
It has rank $1$ over the ring $F[h]$; the unique generator has homological grading $0$. Define $-\tilde{s}^F(K)=-\tilde{s}$ to be the quantum grading of the generator. 

\begin{prop}
\label{prop:red-unred-corresp}
    Let $D$ be a diagram of $K$ and $z$ a cycle that represents a generator of 
    \[\rKh_h(D; F) / \Tor \cong F[h].
    \]
    Then, the two cycles $z, \hnu(z)$ give a basis of 
    \[
        \Kh_h(D; F) / \Tor \cong F[h]^2.
    \]
    In particular, this shows $s^F(K) = \tilde{s}^F(K)$. 
\end{prop}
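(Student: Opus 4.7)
The plan is to transfer the splitting of \Cref{prop:red-ses} from chain complexes to an explicit $F[h]$-basis of $\Kh_h(K; F)/\Tor$, and then to deduce the equality of $s$-invariants via a grading comparison. Set $C := \CKh_h(D; F)$, and fix $z \in C_X$ as in the statement. Tensoring the splitting of \Cref{prop:red-ses} with $F$, the section $-\hnu\colon C_Y \to C$ yields an isomorphism $C_X \oplus C_Y \xrightarrow{\sim} C$ of $F[h^2]$-chain complexes. Passing to homology and then modulo torsion (which agrees for $F[h]$ and $F[h^2]$, since $F[h]$ is free of rank $2$ over $F[h^2]$) gives
\[
H(C)/\Tor \;\cong\; H(C_X)/\Tor \oplus H(C_Y)/\Tor \qquad \text{as $F[h^2]$-modules.}
\]

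The next step is to lift this into an $F[h]$-basis of $H(C)/\Tor \cong F[h]^2$. By the hypothesis on $[z]$, the pair $\{[z], h[z]\}$ is an $F[h^2]$-basis of $H(C_X)/\Tor \cong F[h]$. Since $\hsigma\colon C_X \to C_Y$ is an $F[h^2]$-chain isomorphism (by \Cref{prop:cobordism-map-and-hsigma-commute}), its image $\{[\hsigma(z)], -h[\hsigma(z)]\}$ is an $F[h^2]$-basis of $H(C_Y)/\Tor$; absorbing the sign, $\{[\hsigma(z)], h[\hsigma(z)]\}$ also is. Combining, $\{[z], h[z], [\hsigma(z)], h[\hsigma(z)]\}$ is an $F[h^2]$-basis of $H(C)/\Tor$, and in particular $F[h]$-spans it.

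The key algebraic input is then the identity $\hsigma(z) = z - h\hnu(z)$ coming from the definition $h\hnu = \id - \hsigma$: it expresses each of the four $F[h^2]$-generators above as an $F[h]$-linear combination of $\{[z], [\hnu(z)]\}$. Hence $\{[z], [\hnu(z)]\}$ is an $F[h]$-generating set of the rank-$2$ free $F[h]$-module $H(C)/\Tor$, and therefore an $F[h]$-basis. For the equality of $s$-invariants, I would then compare quantum gradings: $[z]$ has grading $-\tilde{s}+1$ in $C$ (via the conventional $-1$ shift between $\rCKh$ and $C_X$) and $[\hnu(z)]$ has grading $-\tilde{s}-1$ (since $\deg\hnu = -2$). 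Any homogeneous $F[h]$-basis of $q^{-s-1}F[h]\oplus q^{-s+1}F[h]$ must have gradings $\{-s-1,-s+1\}$; matching this with $\{-\tilde{s}+1,-\tilde{s}-1\}$ forces $s = \tilde{s}$.

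The main subtlety I expect will be the passage from the $F[h^2]$-picture (in which the splitting and $\hnu$ live naturally, since $\hnu$ fails to be $F[h]$-linear) to the $F[h]$-picture (in which the $s$-invariant is formulated). The identity $\hsigma(z) = z - h\hnu(z)$ is the crucial bridge that exchanges the $F[h^2]$-pair $\{[z], [\hsigma(z)]\}$ for the $F[h]$-pair $\{[z], [\hnu(z)]\}$, halving the rank in exactly the right way; the rest is bookkeeping in commutative algebra over the PID $F[h]$ and a grading comparison.
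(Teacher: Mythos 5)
Your argument has a genuine gap at the step where you ``combine'' bases into an $F[h^2]$-basis of $H(C)/\Tor$. The splitting isomorphism $C_X \oplus C_Y \xrightarrow{\sim} C$ from \Cref{prop:red-ses} is $(x, y) \mapsto x - \hnu(y)$: the $C_Y$-summand embeds via the section $-\hnu$, not via the inclusion $C_Y \subset C$. Concatenating $\{[z], h[z]\}$ and $\{[\hsigma(z)], h[\hsigma(z)]\}$ directly corresponds instead to the map $(x,y) \mapsto x + y$, whose image $C_X + C_Y$ is a proper subcomplex of $C$ (already in rank one, $F[h]X + F[h]Y = F[h]X + hF[h]\cdot 1 \subsetneq A_h$, since $1$ is not in the span). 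Concretely, $\hnu(z)$ does not lie in the $F[h]$-span of $\{z, hz, \hsigma(z), h\hsigma(z)\}$ --- only $h\hnu(z) = z - \hsigma(z)$ does --- so expressing these four classes as $F[h]$-combinations of $\{[z],[\hnu(z)]\}$ establishes an inclusion of $F[h]$-modules in the wrong direction and does not show that $H(C)/\Tor$ is $F[h]$-spanned by $\{[z],[\hnu(z)]\}$.

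The repair is to push the $H(C_Y)/\Tor$-basis through the section before combining: the correct $F[h^2]$-basis of $H(C)/\Tor$ is $\{z,\, hz,\, -\hnu(\hsigma(z)),\, -\hnu(h\hsigma(z))\}$, which by $\hnu\hsigma = -\hnu$ (\Cref{prop:hnu-formula}) and $\hsigma(hz) = -h\hsigma(z)$ simplifies to $\{z,\, hz,\, \hnu(z),\, -\hnu(hz)\}$ --- exactly the four cycles the paper uses. The passage to the $F[h]$-picture then hinges on the identity $\hnu(hz) = 2z - h\hnu(z)$ rather than on your $\hsigma(z) = z - h\hnu(z)$, since it is $\hnu(hz)$, not $\hsigma(z)$, that appears in the correct basis. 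Once that basis is in hand, the rest of your argument --- recognizing $\{z, \hnu(z)\}$ as a size-two $F[h]$-generating set of a rank-two free module, and the quantum-grading comparison --- is correct.
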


\begin{proof}
    When $\fchar{F} = 2,$ the result is obvious from the splitting over $F[h]$. Hereafter, we assume $\fchar{F} \neq 2$. Take any point on $D$ and regard it as a pointed diagram. As in \Cref{subsec:reduced}, let $C$ denote the unreduced complex $\CKh_h(D)$ and $C_X$ and $C_Y$ the corresponding subcomplexes. Take a cycle $z = \underline{X} \otimes x$ in $C_X$ with $\gr_q(z) = \tilde{s} + 1$ that gives a generator of $H(C_X) / \Tor$. Elements $z$ and $hz$ give a basis of $H(C_X) / \Tor$ over $F[h^2]$. From \Cref{thm:split}, the four cycles
    \[
        z,\ hz,\ \hnu(z),\ \hnu(hz)
    \]
    give a basis of $H(C) / \Tor$ over $F[h^2]$. From
    \[
        \hnu(hz) = 2z - h \hnu(z),
    \]
    we can instead choose 
    \[
        z,\ hz,\ \hnu(z),\ h\hnu(z)
    \]
    as a basis of $H(C) / \Tor$. Therefore, over $F[h]$, the two cycles
    \[
        z,\ \hnu(z)
    \]
    give a basis of $H(C) / \Tor$. In particular,
    \[
        \gr_q(z) = -\tilde{s} + 1,\ \gr_q(\hnu(z)) = -\tilde{s} - 1
    \]
    shows that $s^F = \tilde{s}^F$. 
\end{proof}

\begin{remark}
    It is known that $s^F$ depends on the field $F$; in fact, direct computation shows that $s^{\Q}, s^{\F_2}, s^{\F_3}$ are linearly independent (see \cite[Remark 6.1]{LS:2014}, \cite[Section 6]{Schuetz:2022} and \cite{LZ:2021}). Whether the infinite set $\{s_F\}$ of the Rasmussen invariants is linear independent as $F$ runs over all prime fields remains open (\cite[Question 6.1]{LS:2014}). For an arbitrary field $F$, it is proved in \cite[Proposition 4.36]{Sano-Sato:2023} that $s^F$ depends only on the characteristic of $F$. 
\end{remark}

\begin{ex}
    Consider the simplest case $D = \bigcirc$. The reduced homology $H(C_X) = C_X \cong F[h]$ is generated by $\underline{X}$. \Cref{prop:red-unred-corresp} implies that the unreduced homology $H(C) = C \cong A^F_h$ is generated by $X$ and $\hnu(X) = 1$, which is obviously true. 
\end{ex}

\Cref{prop:red-unred-corresp} can be easily generalized to links. 

\begin{prop}
\label{prop:red-unred-corresp-link} 
    Let $D$ be an $\ell$-component link diagram whose reduced homology $\rKh_h(D; F)$ has at most rank 1 in each homological grading. Let $z_1, \ldots, z_{2^{\ell - 1}}$ be $2^{\ell - 1}$ cycles that give a basis of $\rKh_h(D; F) / \Tor \cong F[h]^{2^{\ell - 1}}$. Then the $2^{\ell}$ cycles $z_1, \hnu(z_1), \ldots, z_{2^{\ell - 1}}, \hnu(z_{2^{\ell - 1}})$ give a basis of 
    \[\Kh_h(D; F) / \Tor \cong F[h]^{2^\ell}.
    \]
\end{prop}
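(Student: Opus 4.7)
The plan is to reduce this to the knot case (\Cref{prop:red-unred-corresp}) by applying the same argument to each $z_i$ simultaneously. In characteristic $2$, the splitting of \Cref{thm:split} is already defined over $F[h]$ (see Remark after \Cref{prop:red-ses}), so $\CKh_h(D;F)$ is literally two copies of $\rCKh_h(D;F)$ over $F[h]$, and the basis $\{z_i, \hnu(z_i)\}_i$ is immediate. For $\fchar F \neq 2$, I would work over the subring $F[h^2]$ and then descend, mirroring the proof of \Cref{prop:red-unred-corresp} but with $2^{\ell-1}$ parallel instances.

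First, I would take the $F[h]$-basis $\{z_i\}$ of $\rKh_h(D; F)/\Tor$ and note that as an $F[h^2]$-module, $H(C_X)/\Tor$ has basis $\{z_i, hz_i\}_i$. By \Cref{thm:split}, the image of $C_X$ together with $\hnu(C_Y) \cong C_Y$ fills out $C$ over $F[h^2]$, so $H(C)/\Tor$ acquires $F[h^2]$-rank $2^{\ell+1}$. Using $\hnu\circ\hsigma = -\hnu$ from \Cref{prop:hnu-formula}(2) to identify the second copy with $\hnu(H(C_X))$, I get a candidate $F[h^2]$-generating set $\{z_i, hz_i, \hnu(z_i), \hnu(hz_i)\}_i$. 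A direct computation from \Cref{prop:hnu-formula}(1) (or from $\hnu = (\id-\hsigma)/h$) gives the key identity
\[
    \hnu(h z_i) = 2 z_i - h\, \hnu(z_i),
\]
so since $2$ is invertible in $F$, an $F[h^2]$-elementary row operation turns this set into $\{z_i, h z_i, \hnu(z_i), h\hnu(z_i)\}_i$. This is precisely the $F[h^2]$-expansion of the $F[h]$-set $\{z_i, \hnu(z_i)\}_i$, proving it is an $F[h]$-basis of $\Kh_h(D;F)/\Tor$.

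The main obstacle is verifying that $\hnu(z_i)$ actually lies in the free part (and is not absorbed into torsion), and that the resulting $2^\ell$ elements are $F[h]$-independent modulo torsion. The rank hypothesis on $\rKh_h(D; F)$ is what makes this bookkeeping clean: since each $z_i$ occupies a distinct homological degree and $\hnu$ preserves homological grading while shifting quantum grading by $-2$, the $F[h^2]$-module structure of $\Kh_h(D;F)/\Tor$ produced by the splitting matches the rank count, forcing all of $\{z_i, hz_i, \hnu(z_i), h\hnu(z_i)\}$ to be torsion-free and independent. Once this is in place, the base change argument above concludes the proof, and the identification $s^F = \tilde s^F$ does not play a role here since we are not working with a knot invariant but merely packaging a basis.
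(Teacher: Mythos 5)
Your proposal is correct and takes essentially the same approach that the paper indicates (it gives no explicit proof beyond the remark that \Cref{prop:red-unred-corresp} ``can be easily generalized to links''): you run the proof of \Cref{prop:red-unred-corresp} in parallel over the basis $\{z_i\}$, using the $F[h^2]$-splitting from \Cref{thm:split}, the identity $\hnu(hz_i) = 2z_i - h\hnu(z_i)$, and the base change back from $F[h^2]$ to $F[h]$.
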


\begin{ex}
    \begin{figure}
        \centering
        \input{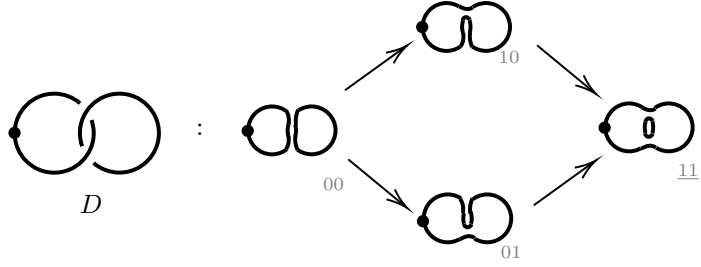}
        \caption{A Hopf link diagram $D$ and its cube of resolutions.}
        \label{fig:hopf-link}
    \end{figure}
    Let $D$ be a positive Hopf link diagram. Choose a basepoint on one of its components, and regard it as a pointed diagram, as in \Cref{fig:hopf-link}. A basis of the reduced homology $\rKh_h(D; F) \cong F[h]^2$ is given by cycles
    \[
        z_1 = \underline{X} \otimes Y,\ 
        z_2 = \underline{X} \otimes 1
    \]
    in homological grading $0$ and $2$ respectively. Here, the underline indicates the label on the pointed circle. From \Cref{prop:red-unred-corresp-link}, a basis of the unreduced homology $\Kh_h(D; F) \cong F[h]^4$ is given by the four cycles
    \begin{gather*}
        z_1 = \underline{X} \otimes Y,\quad \hnu(z_1) = \underline{1} \otimes X - \underline{X} \otimes 1, \\
        z_2 = \underline{X} \otimes 1,\quad \hnu(z_2) = \underline{1} \otimes 1.
    \end{gather*}
\end{ex}

\subsection{Describing the homological generators}

Next, we show that for a knot diagram $D$, the generators of $\Kh_h(D; F)/\Tor \cong F[h]^2$ and $\rKh_h(D; F)/\Tor \cong F[h]$ can be described using the Lee classes. First, choose a base point on $D$, and choose either one of the subcomplexes $C_X$, $C_Y$ of $\CKh_h(D)$ that contains the Lee cycle $\ca(D)$ to be the reduced complex $\rCKh_h(D)$. Take a cycle $z$ that represents a generator of $\rKh_h(D; F)/\Tor \cong F[h]$, such that the Lee class (modulo torsion) can be written as 
\[
    [\ca(D)] = h^d [z]
\]
for some integer $d \geq 0$. This integer $d$ is called the \textit{$h$-divisibility} of the Lee class $[\ca(D)]$ over $F$, and is denoted $d_h(D)$. Passing to the unreduced homology, we have 
\[
    \hnu[\ca(D)] = \begin{cases}
        h^d [\hnu(z)] & \text{\ if $d$ is even,} \\ 
        h^{d - 1} (2[z] - h [\hnu(z)]) & \text{\ if $d$ is odd}.
    \end{cases} 
\]
On the other hand, the definition of $\hnu$ gives
\[
    \hnu(\ca(D)) = \frac{\ca(D) - \cb(D)}{h}.
\]
Thus we have 
\[
    [\hnu(z)] = \frac{[\ca(D)] + (-1)^{d + 1} [\cb(D)]}{h^{d + 1}}.
\]
\Cref{prop:red-unred-corresp} states that $[z]$ and $[\hnu(z)]$ generates $\Kh_(D; F) / \Tor$. Moreover, the following proposition states that these classes are in fact knot invariants. 

\begin{prop}
\label{prop:cz-invariance}
    The following classes in $\Kh_h(D; F)/\Tor$,
    \[
        [\ca(D)]/h^d,\quad 
        [\cb(D)]/(-h)^d
    \]
    and
    \[
        \frac{[\ca(D)] + (-1)^{d + 1} [\cb(D)]}{h^{d + 1}}
    \]
    are invariant under the Reidemeister moves.
\end{prop}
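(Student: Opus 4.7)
The plan is to reduce the claimed invariance to the transformation laws of the Lee classes under a Reidemeister move. Let $\phi\colon \Kh_h(D;F)/\Tor \to \Kh_h(D';F)/\Tor$ denote the isomorphism induced by $D \rightsquigarrow D'$. By \Cref{prop:R-map-lee-class}, $\phi([\ca(D)]) = h^j[\ca(D')]$ with $j = (\delta w(D,D') - \delta r(D,D'))/2 \in \{0, \pm 1\}$; combining this with the identity $[\cb(D)] = \hsigma[\ca(D)]$, the relation $\hsigma(h) = -h$, and the $\hsigma$-equivariance of cobordism maps from \Cref{prop:cobordism-map-and-hsigma-commute}, I would deduce the companion formula $\phi([\cb(D)]) = (-h)^j[\cb(D')]$.

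The key intermediate step is to establish the identity $d' = d - j$, where $d = d_h(D)$ and $d' = d_h(D')$. Since $F[h]$ is a PID and $\phi$ is an isomorphism of $F[h]$-modules, it preserves the $h$-divisibility of every element; applied to $\phi([\ca(D)]) = h^j[\ca(D')]$ this gives $d = j + d'$. In the case $j = -1$, one reads the equation, in accordance with the convention stated immediately after \Cref{prop:R-map-lee-class}, as $h\phi([\ca(D)]) = [\ca(D')]$, which automatically yields $d' = d + 1 = d - j$.

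With $d' = d - j$ in hand, the three invariances become direct algebraic consequences:
\begin{align*}
\phi([\ca(D)]/h^d) &= h^j[\ca(D')]/h^d = [\ca(D')]/h^{d'}, \\
\phi([\cb(D)]/(-h)^d) &= (-h)^j[\cb(D')]/(-h)^d = [\cb(D')]/(-h)^{d'}, \\
\phi\!\left(\frac{[\ca(D)] + (-1)^{d+1}[\cb(D)]}{h^{d+1}}\right) &= \frac{h^j[\ca(D')] + (-1)^{d+1}(-h)^j[\cb(D')]}{h^{d+1}} = \frac{[\ca(D')] + (-1)^{d'+1}[\cb(D')]}{h^{d'+1}},
\end{align*}
where the last equality uses $h^{d+1-j} = h^{d'+1}$ and $(-1)^{d+1+j} = (-1)^{d'+1}$. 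Well-definedness of the three quotients as elements of the torsion-free module $\Kh_h(D;F)/\Tor$ follows from the definition of $d_h$ for the first two, and from the identity $[\ca(D)] + (-1)^{d+1}[\cb(D)] = h^{d+1}[\hnu(z)]$ derived just before the proposition for the third. The main obstacle is minor: carefully handling the $j = -1$ case in the divisibility argument (where $[\ca(D')]$ is understood to be $h$-divisible) and tracking the sign from $\hsigma(h) = -h$ through the $[\cb(D)]$ computation.
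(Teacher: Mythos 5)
Your proof is correct. The key step is establishing $d' = d - j$, and your route to it differs from the paper's: you derive it directly from the observation that a Reidemeister-move isomorphism of torsion-free $F[h]$-modules preserves $h$-divisibility, applied to $\phi([\ca(D)]) = h^j[\ca(D')]$, with the $j = -1$ case read through the convention stated after \Cref{prop:R-map-lee-class}. The paper instead cites an external result (the invariance of the quantity $2d_h(D) + w(D) - r(D)$ under Reidemeister moves) to obtain the same identity $d' = d - j$. The two are equivalent since $2j = \delta w - \delta r$, so you are essentially inlining a proof of the cited theorem; the upside of your version is that it is self-contained, relying only on \Cref{prop:R-map-lee-class} and \Cref{prop:cobordism-map-and-hsigma-commute}, while the paper's is shorter but defers to prior work. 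One small point worth being explicit about: the $h$-divisibility of $[\ca(D)]$ computed in $\Kh_h(D;F)/\Tor$ (where $\phi$ acts) agrees with the $d$ of the definition, which lives in $\rKh_h(D;F)/\Tor$; this follows because, by \Cref{prop:red-unred-corresp}, the reduced summand $F[h]\langle z\rangle$ is a free direct summand of $\Kh_h(D;F)/\Tor \cong F[h]^2$, so $h$-divisibility computed in either module coincides. Your sign bookkeeping in the third computation, $(-1)^{d+1+j} = (-1)^{d'+1}$ and $h^{d+1-j} = h^{d'+1}$, and the well-definedness via $[\ca(D)] + (-1)^{d+1}[\cb(D)] = h^{d+1}[\hnu(z)]$, are all in order.
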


\begin{proof}
    Let $D'$ be another diagram representing the same knot, and $\phi$ be the isomorphism between the two homology groups. From \Cref{prop:R-map-lee-class}, we have 
    \[
        \phi([\ca(D)]) = h^j [\ca(D')],\quad 
        \phi([\cb(D)]) = (-h)^j [\cb(D')]
    \]
    where 
    \[
        j = \frac{\delta w(D, D') - \delta r(D, D')}{2}.
    \]
    From \cite[Theorem 1]{Sano:YUI}, the quantity
    \[
        2d_h(D) + w(D) - r(D)
    \]
    is invariant under the Reidemeister moves. This shows that
    \[
        \phi([\ca(D)]/h^d) =  [\ca(D')]/h^{d'},\quad 
        \phi([\cb(D)]/(-h)^{d'}) = [\cb(D')]/(-h)^{d'}
    \]
    where $d' = d_h(\ca(D'))$ denotes the $h$-divisibility of the Lee class of $D'$.
    The latter statement is immediate from the former. 
\end{proof}

\Cref{prop:cz-invariance} justifies to write 
\[
    \widetilde{\cz}(K) := [\ca(D)]/h^d,\quad
    \cz(K) := \frac{[\ca(D)] + (-1)^{d + 1} [\cb(D)]}{h^{d + 1}}
\]
for a knot $K$ with diagram $D$. In summary, 

\begin{prop}
\label{prop:kh-free-generators}
    Let $F$ be a field and $K$ a knot. Let $s = s^F(K)$ be the Rasmussen invariant of $K$ over F. 
    \begin{enumerate}
        \item $\rKh_h(K)/\Tor \cong q^{-s} F[h]$ is freely generated by $\widetilde{\cz}(K)$. 

        \item $\Kh_h(K)/\Tor \cong q^{-s - 1} F[h] \oplus q^{-s + 1} F[h]$ is freely generated by $\cz(K)$ and $\widetilde{\cz}(K)$. 
    \end{enumerate}
\end{prop}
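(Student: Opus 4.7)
The plan is to exhibit, for a chosen generator $[z]$ of $\rKh_h(K;F)/\Tor$, that $\widetilde\cz(K) = [z]$ and $\cz(K) = [\hnu(z)]$ (up to a scalar), and then apply \Cref{prop:red-unred-corresp} to conclude.

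For (1): since $K$ is a knot, the free part $\rKh_h(K;F)/\Tor$ is a rank-one graded free $F[h]$-module, generated by some class $[z]$ at reduced quantum grading $-\tilde s = -s$ by definition of $\tilde s$. By definition of the $h$-divisibility $d = d_h(D)$, the Lee class in the reduced homology factors as $[\ca(D)] = h^d [z]$ modulo torsion, after absorbing a nonzero scalar into the generator. Hence $\widetilde\cz(K) = [\ca(D)]/h^d = [z]$ is a generator, proving (1).

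For (2): \Cref{prop:red-unred-corresp} gives that $[z]$ and $[\hnu(z)]$ form an $F[h]$-basis of $\Kh_h(K;F)/\Tor$. It remains to identify $\cz(K)$ with $[\hnu(z)]$. Using $\cb(D) = \hsigma(\ca(D))$ together with the identity $\hsigma(h^d x) = (-h)^d \hsigma(x)$ from \Cref{prop:hsigma-ops}, we have
\[
    [\cb(D)] \;=\; \hsigma([\ca(D)]) \;=\; (-h)^d \hsigma([z]).
\]
Substituting into the definition of $\cz(K)$ and using the identity $(-1)^{d+1}(-h)^d = -h^d$, valid for every integer $d \geq 0$, a direct computation gives
\[
    \cz(K) \;=\; \frac{[\ca(D)] + (-1)^{d+1}[\cb(D)]}{h^{d+1}} \;=\; \frac{h^d\bigl([z] - \hsigma([z])\bigr)}{h^{d+1}} \;=\; [\hnu(z)].
\]
Thus $\{\widetilde\cz(K),\cz(K)\}$ is an $F[h]$-basis of $\Kh_h(K;F)/\Tor$.

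For the quantum gradings: $\widetilde\cz(K) = [z]$ has reduced grading $-s$, hence unreduced grading $-s+1$ after undoing the conventional $-1$ shift of the reduced theory. Since $\hnu$ has degree $-2$, the class $\cz(K) = [\hnu(z)]$ has unreduced grading $-s-1$, matching $q^{-s-1}F[h] \oplus q^{-s+1}F[h]$. The only delicate point I anticipate is verifying that $[\ca(D)]$ is non-torsion so that $d_h(D)$ is well-defined; this is standard in the Mackaay--Turner--Vaz framework, since after inverting $h$ the discriminant of $X^2 - hX$ becomes a unit and Lee's theorem provides a rank-one basis of the reduced homology of a knot with the Lee class as a generator.
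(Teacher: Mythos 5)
Your proof is correct and follows essentially the same route as the paper: establish the $F[h]$-basis $\{[z],[\hnu(z)]\}$ of $\Kh_h(K;F)/\Tor$ from \Cref{prop:red-unred-corresp} and then identify $\cz(K)=[\hnu(z)]$. The only (cosmetic) difference is that you obtain $\cz(K)=[\hnu(z)]$ in one uniform computation via $\cb(D)=\hsigma(\ca(D))$ and $\hsigma(h^d[z])=(-h)^d\hsigma([z])$, whereas the paper splits into the cases $d$ even/odd when computing $\hnu[\ca(D)]$.
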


\begin{remark}
    As in \Cref{prop:red-unred-corresp-link}, a similar description for a link $L$ can be given using the Lee classes, provided that reduced homology $\rKh_h(L; F)$ has at most rank 1 in each homological grading.
\end{remark}

It is easy to see that $\gr_q(\ca(D)) = -w(D) + r(D)$, and with $\deg(h) = 2$, we have 
\begin{align*}
    s^F(K) 
    &= -\gr_q(\cz(D)) - 1 \\
    &= 2d_h(D) + w(D) - r(D) + 1,
\end{align*}
recovering the formulas of \cite[Theorem 3]{Sano:2020} for the unreduced case, and \cite[Theorem 2]{Sano-Sato:2023} for the reduced case. 

We show that the classes $\cz(K), \widetilde{\cz}(K)$ behave well with respect to cobordisms. 
Consider the element $U := X + Y$. From $X^2 = hX$, $Y^2 = -hY$ and $XY = 0$, we have 
\[
    UX = hX, \quad UY = -hY.
\]
Let $D$ be a knot diagram with base point $p$, and $C$ the Seifert circle of $D$ that contains $p$. Recall that $C$ is either labeled $X$ or $Y$ with respect to the $XY$-labeling that defines the Lee cycle. Define an endomorphism $u$ on $\CKh(D)$ by 
\[
    u (\underline{x_1} \otimes x_2 \otimes \cdots) := \begin{cases}
        \ \ \underline{Ux_1} \otimes x_2 \otimes \cdots & \text{ if $C$ is labeled $X$,} \\ 
        -\underline{Ux_1} \otimes x_2 \otimes \cdots & \text{ if $C$ is labeled $Y$}.
    \end{cases}
\]
Then we have 
\[
    u \ca(D) = h \ca(D), \quad u \cb(D) = -h \cb(D).
\]
It follows from \cite[Proposition 3.2]{Sano-Sato:2023} that $u$ is independent of the choice of the base point, up to chain homotopy. From $u^2 = h^2$ it follows that $\Kh_h(D)$ admits a $F[u] / (u^2 - h^2)$ module structure. With \Cref{prop:cob-map-lee-class}, we immediately obtain the following propositions. 

\begin{prop}
\label{prop:cz-cobordism}
    Let $S$ be an oriented, connected cobordism between knots $K, K'$. The corresponding cobordism map 
    \[
        \phi_S\colon \Kh_h(K; F) / \Tor \to \Kh_h(K'; F) / \Tor
    \]
    sends
    \[
        \cz(K) \mapsto u^j \cz(K'),\quad 
        \widetilde{\cz}(K) \mapsto u^j \widetilde{\cz}(K'),\quad 
    \]
    where $j \geq 0$ is given by 
    \[
        j = \frac{\delta s^F(K, K') - \chi(S)}{2}.
    \]
\end{prop}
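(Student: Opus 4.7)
The plan is to derive both formulas directly from \Cref{prop:cob-map-lee-class} together with the naturality of $\hsigma$ (\Cref{prop:cobordism-map-and-hsigma-commute}) and the explicit action of $u$ on Lee classes. Choose diagrams $D, D'$ for $K, K'$ such that $S$ is presented as a movie, and let $d = d_h(D)$, $d' = d_h(D')$. First, I would apply \Cref{prop:cob-map-lee-class} to obtain
\[
    \phi_S([\ca(D)]) = h^{j'}[\ca(D')], \qquad j' = \frac{\delta w(D, D') - \delta r(D, D') - \chi(S)}{2}.
\]
Then, since $\cb(D) = \hsigma(\ca(D))$ and $\phi_S$ commutes with $\hsigma$ while $\hsigma(h) = -h$, the same cobordism map sends $[\cb(D)] \mapsto (-h)^{j'}[\cb(D')]$. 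These are the two inputs I need.

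Next, I would divide by the appropriate powers of $h$ in the definitions of $\widetilde{\cz}$ and $\cz$. For the reduced generator, dividing the first equation by $h^d$ and recognizing the right-hand side as $h^{d'} \widetilde{\cz}(K')$ gives $\phi_S(\widetilde{\cz}(K)) = h^{j' + d' - d} \widetilde{\cz}(K')$. The invariance of $2 d_h(D) + w(D) - r(D)$ under Reidemeister moves (equivalently, the formula $s^F(K) = 2d + w - r + 1$ from \cite[Theorem 3]{Sano:2020} and \cite[Theorem 2]{Sano-Sato:2023}) then rewrites the exponent as
\[
    j' + d' - d \;=\; \frac{\delta s^F(K, K') - \chi(S)}{2} \;=\; j,
\]
matching the claimed formula. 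To upgrade $h^j$ to $u^j$, I would invoke the identities $u\,\ca = h\,\ca$ and $u\,\cb = -h\,\cb$ (which follow from $UX = hX$ and $UY = -hY$ together with the sign convention in the definition of $u$), so that $h^j \widetilde{\cz}(K') = u^j \widetilde{\cz}(K')$. For $\cz(K)$, the same manipulation on the combined formula
\[
    \phi_S(\cz(K)) \;=\; \frac{h^{j'}[\ca(D')] + (-1)^{d+1}(-h)^{j'}[\cb(D')]}{h^{d+1}}
\]
rearranges into $u^j \cz(K')$ via $u^j[\cb(D')] = (-h)^j [\cb(D')]$ once one verifies the sign. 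Non-negativity $j \geq 0$ is automatic: by \Cref{prop:kh-free-generators}, the targets $\widetilde{\cz}(K'), \cz(K')$ are free $F[h]$-generators, so any image of a generator must be a non-negative power of $h$ (equivalently, $u$) times the generator.

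The only real point to check carefully is the parity bookkeeping for the $\cz$ formula: matching the $[\cb(D')]$-coefficients on the two sides requires $(-1)^{d+1+j'} = (-1)^{d'+1+j}$, i.e.\ $d + j' \equiv d' + j \pmod 2$. This is the main (and essentially only) obstacle, but it is immediate from $j - j' = d' - d$. Everything else is a direct substitution, so I expect the proof to be short once the two key inputs (\Cref{prop:cob-map-lee-class} and the $u$-action on Lee classes) are assembled.
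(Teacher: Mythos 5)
Your proposal is correct and matches the argument the paper has in mind: the paper states the result follows immediately from \Cref{prop:cob-map-lee-class} combined with the $u$-action $u\,\ca(D) = h\,\ca(D)$, $u\,\cb(D) = -h\,\cb(D)$, which is exactly the two inputs you assemble. The bookkeeping you carry out — $j = j' + d' - d$ via the formula $s^F = 2d_h + w - r + 1$, the sign parity check $d + j' \equiv d' + j \pmod 2$, and nonnegativity of $j$ from freeness of the generators — is the computation that makes the assertion precise. One small point worth making explicit (you implicitly use it): the hypothesis of \Cref{prop:cob-map-lee-class}, namely that every component of $S$ has boundary in $D$, is satisfied because $S$ is connected and both $K, K'$ are nonempty.
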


\begin{cor}
    $\cz(K), \widetilde{\cz}(K)$ are knot concordance invariants. 
\end{cor}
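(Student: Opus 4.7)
The plan is to apply \Cref{prop:cz-cobordism} to a concordance and its time-reverse. A smooth concordance between knots $K$ and $K'$ is a smoothly embedded oriented annulus $S \subset S^3 \times [0,1]$ with $\partial S = -K \sqcup K'$; in particular $S$ is connected, oriented, has every component meeting both ends, and satisfies $\chi(S) = 0$. Reversing the $[0,1]$-factor yields a concordance $\bar{S}$ from $K'$ to $K$ with the same properties.

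Applying \Cref{prop:cz-cobordism} to $S$ gives $\phi_S(\cz(K)) = u^{j}\,\cz(K')$ and $\phi_S(\widetilde{\cz}(K)) = u^{j}\,\widetilde{\cz}(K')$ with $j = (s^F(K') - s^F(K))/2 \geq 0$. Applying the same proposition to $\bar{S}$ gives a nonnegative integer $j' = (s^F(K) - s^F(K'))/2 \geq 0$. Since $j + j' = 0$ and both are nonnegative, we conclude $j = j' = 0$ (and, as a by-product, recover the concordance invariance of $s^F$ itself). Hence $\phi_S(\cz(K)) = \cz(K')$ and $\phi_S(\widetilde{\cz}(K)) = \widetilde{\cz}(K')$, and likewise for $\phi_{\bar{S}}$.

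This is the precise statement of concordance invariance: the classes $\cz$ and $\widetilde{\cz}$ live in different modules for different knots, so invariance is formulated as compatibility with the cobordism-induced maps $\phi_S, \phi_{\bar{S}}$ of any concordance. The main conceptual point to emphasize is this formulation of invariance via the natural identification; there is no real obstacle, since the nonnegativity of $j$ is already packaged into \Cref{prop:cz-cobordism}, and the proof reduces to the symmetric observation that $S$ and $\bar{S}$ together force $j$ to vanish.
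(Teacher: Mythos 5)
Your proof is correct and takes the natural approach of applying \Cref{prop:cz-cobordism} to a concordance $S$ and its time-reversal $\bar{S}$, noting $\chi(S) = \chi(\bar{S}) = 0$. The one refinement worth highlighting is that you derive $j = 0$ from the symmetry $j + j' = 0$ together with the nonnegativity of both exponents asserted in \Cref{prop:cz-cobordism}, rather than citing the concordance invariance of $s^F$ as a known fact; this makes the argument self-contained and recovers concordance invariance of $s^F$ as a byproduct, exactly as you observe.
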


\subsection{Characteristic \texorpdfstring{$\neq 2$}{neq 2} and \texorpdfstring{$SU(2)$}{SU(2)}-equivariant theory}

Assume throughout this section that $\fchar{F} \neq 2$. In this case, we can alternatively take $\cz(D)$ and 
\[
    \cz'(D) := u \cz(D) = \frac{[\ca(D)] + (-1)^d [\cb(D)]}{h^d}
\]
as generators of $\Kh_h(D; F)/\Tor$ over $F[h]$. With $u^2 = h^2$, so we can equivalently state that $\Kh_h(D; F)/\Tor$ is freely generated by $\cz(D)$ and $h \cz(D)$ over $F[u]$. This decomposition shows the $\hsigma$-symmetry of $\Kh_h(D; F)/\Tor$ more clearly: $F[u] \langle \cz(D) \rangle$ is the $(+1)$-eigenspace, and $F[u] \langle h \cz(D) \rangle$ is the $(-1)$-eigenspace of $\hsigma$. The following diagram depicts how the two decompositions of $\Kh_h(D; F)/\Tor$ are related.
\[
    \begin{tikzcd}[row sep=2.5em]
\ \vdots\ & \ \vdots\ \\
h^2 \cz(D) \arrow[u, "h", bend left] \arrow[ru] & h \cz'(D) \arrow[u, "h"', bend right] \arrow[lu, "u" description, dashed] \\
h \cz(D) \arrow[u, "h", bend left] \arrow[ru, dashed] & \cz'(D) \arrow[u, "h"', bend right] \arrow[lu, "u" description] \\
\cz(D) \arrow[u, "h", bend left] \arrow[ru, "u" description] &                           
    \end{tikzcd}
\]

Next we prove an analogous result for the $SU(2)$ equivariant theory (bigraded Lee theory) over $F$. A similar argument is given in \cite[Section 3.3]{Sano:2020}, but we restate it here for completeness. The Frobenius extension is given by $R_t \otimes F = F[t]$ and $A^F_t = F[t, X]/(X^2 - t)$. Also consider the rank 2 extension $F[\sqrt{t}]$ of $F[t]$ and $A^F_{\sqrt{t}} := A^F_t \otimes F[\sqrt{t}]$. For a knot diagram $D$, let $\CKh_t(D; F)$ and $\CKh_{\sqrt{t}}(D; F)$ denote the corresponding chain complexes, regarded over $F[t]$ and $F[\sqrt{t}]$ respectively. 
We have 
\[
    \CKh_{{\sqrt{t}}}(D; F) = \CKh_t(D; F) \oplus \sqrt{t} \CKh_t(D; F) 
\]
over $F[t]$. Moreover, since $\deg t = 4$, $\CKh_t(D; F)$ splits as 
\[
    \CKh_t(D; F) = \CKh^{[1]}_t(D; F) \oplus \CKh^{[-1]}_t(D; F)
\]
where $\CKh^{[\pm 1]}_t(D; F)$ denotes the quantum grading $\pm 1 \bmod{4}$ subcomplex of $\CKh_t(D; F)$. 

Let $\ca_{\sqrt{t}}(D)$, $\cb_{\sqrt{t}}(D)$ denote the two Lee cycles in $\CKh_{\sqrt{t}}(D; F)$, given by tensor products of elements
\[
    X_{\pm} := X \pm \sqrt{t} \in A^F_{\sqrt{t}}.
\]
Although these cycles do not belong to $\CKh_t(D; F)$, we have the following. 

\begin{lem}
    $\ca_{\sqrt{t}}(D) - \cb_{\sqrt{t}}(D)$ is divisible by $\sqrt{t}$, and the two elements
    \[
        \cx_t(D) := \ca_{\sqrt{t}}(D) + \cb_{\sqrt{t}}(D),\quad 
        \cy_t(D) := \frac{\ca_{\sqrt{t}}(D) - \cb_{\sqrt{t}}(D)}{2\sqrt{t}}
    \]
    belong to $\CKh_t(D; F)$. Moreover, one is contained in $\CKh^{[1]}_t(D; F)$ and the other in $\CKh^{[-1]}_t(D; F)$.
\end{lem}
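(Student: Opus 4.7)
The plan is to expand both Lee cycles as polynomials in $\sqrt{t}$ and separate the even and odd degree parts. Let $r$ denote the number of Seifert circles of $D$, and write the $X$-labels of the circles defining $\ca_{\sqrt{t}}(D)$ as $X_{\epsilon_1}, \ldots, X_{\epsilon_r}$, with $\epsilon_i \in \{+,-\}$. Since $\cb_{\sqrt{t}}(D)$ is the Lee cycle for the reversed orientation, its $i$-th factor is $X_{-\epsilon_i}$. Using $X_{\pm} = X \pm \sqrt{t}$ and expanding each tensor factor, I can write
\[
    \ca_{\sqrt{t}}(D) = \sum_{S \subseteq \{1,\dots,r\}} \Big(\prod_{i\in S}\epsilon_i\Big)\, (\sqrt{t})^{|S|}\, e_S,
\]
where $e_S \in A_t^{\otimes r}$ denotes the tensor with $1$ in the positions of $S$ and $X$ elsewhere. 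The same expansion for $\cb_{\sqrt{t}}(D)$ replaces each $\epsilon_i$ by $-\epsilon_i$, which introduces an extra sign $(-1)^{|S|}$.

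Adding and subtracting then cleanly isolates the parity of $|S|$:
\[
    \ca_{\sqrt{t}}(D) + \cb_{\sqrt{t}}(D) = 2\!\!\sum_{|S|\text{ even}}\!\!\Big(\prod_{i\in S}\epsilon_i\Big)t^{|S|/2} e_S,\qquad
    \ca_{\sqrt{t}}(D) - \cb_{\sqrt{t}}(D) = 2\sqrt{t}\!\!\sum_{|S|\text{ odd}}\!\!\Big(\prod_{i\in S}\epsilon_i\Big) t^{(|S|-1)/2} e_S.
\]
The first equation shows $\cx_t(D) \in \CKh_t(D;F)$, and the second shows the difference is divisible by $\sqrt{t}$ with quotient $2\cy_t(D) \in \CKh_t(D;F)$, giving exactly the formula in the statement. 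Conceptually, this is just the eigenspace decomposition of the involution $\sigma_{\sqrt{t}}$ (which sends $\sqrt{t} \mapsto -\sqrt{t}$ and satisfies $\cb_{\sqrt{t}}(D) = \sigma_{\sqrt{t}}(\ca_{\sqrt{t}}(D))$) applied to the Lee cycle, with $(+1)$-eigenspace $\CKh_t(D;F)$ and $(-1)$-eigenspace $\sqrt{t}\,\CKh_t(D;F)$.

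For the quantum grading claim, note that each summand $(\sqrt{t})^{|S|}e_S$ has the same internal quantum degree $2|S| + 2(r-|S|) = 2r$, so $\ca_{\sqrt{t}}(D)$ and $\cb_{\sqrt{t}}(D)$ are both homogeneous of some common quantum grading $q_0$. Hence $\cx_t(D)$ is homogeneous of quantum grading $q_0$, while $\cy_t(D)$ has quantum grading $q_0 - \deg(\sqrt{t}) = q_0 - 2$. Since $\deg t = 4$, the subcomplex $\CKh_t(D;F)$ splits as $\CKh_t^{[1]}(D;F) \oplus \CKh_t^{[-1]}(D;F)$ according to quantum grading modulo $4$, and $\cx_t(D), \cy_t(D)$ lie in opposite summands.

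I do not anticipate any serious obstacle here: the entire argument is the observation that powers of $\sqrt{t}$ split by parity, together with a grading count. The only point requiring care is the sign bookkeeping identifying $\cb_{\sqrt{t}}(D)$ as the image of $\ca_{\sqrt{t}}(D)$ under sign-reversal of all $\epsilon_i$, i.e.\ under $\sigma_{\sqrt{t}}$, which follows directly from the definition of the Lee cycle for the reversed orientation.
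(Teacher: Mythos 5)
Your proof is correct and establishes the same underlying fact as the paper --- the eigenspace decomposition of $\ca_{\sqrt{t}}(D)$, $\cb_{\sqrt{t}}(D)$ under $\sigma_{\sqrt{t}}$ --- but by a genuinely different argument. The paper proves the general statement
\[
    x + \sigma_{\sqrt{t}}(x) \in (A^F_t)^{\otimes r}, \qquad x - \sigma_{\sqrt{t}}(x) \in \sqrt{t}\,(A^F_t)^{\otimes r}
\]
for any $x = x_1 \otimes \cdots \otimes x_r$ with $x_i \in \{X_\pm\}$ by induction on $r$, using the factorization
\[
    x \pm \sigma_{\sqrt{t}}(x) = X \otimes \bigl(x' \pm \sigma_{\sqrt{t}}(x')\bigr) + \sqrt{t} \otimes \bigl(x' \mp \sigma_{\sqrt{t}}(x')\bigr).
\]
You instead give a closed-form expansion over subsets $S \subseteq \{1,\dots,r\}$, pulling out the $(\sqrt{t})^{|S|}$ scalars and observing that the sign change $\epsilon_i \mapsto -\epsilon_i$ introduces the factor $(-1)^{|S|}$, so addition and subtraction split the sum by parity of $|S|$. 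Both arguments are valid; the paper's induction is shorter to write, while your expansion exposes the coefficients explicitly and makes the parity splitting visibly obvious. One point in your favor: the paper's proof does not explicitly address the final claim that $\cx_t(D)$ and $\cy_t(D)$ land in opposite $\bmod\ 4$ quantum gradings, whereas your grading count (each $(\sqrt{t})^{|S|} e_S$ has the same internal degree, so $\cy_t(D)$ sits $\deg\sqrt{t} = 2$ below $\cx_t(D)$) fills that gap cleanly.
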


\begin{proof}
    Take an element $x = x_1 \otimes \cdots \otimes x_r \in (A^F_{\sqrt{t}})^{\otimes r}$ with $x_i \in \{X_\pm\}$. It suffices to prove 
    \[
        x + \hsigma(x) \in (A^F_t)^{\otimes r},\quad 
        x - \hsigma(x) \in \sqrt{t} (A^F_t)^{\otimes r}.
    \]
    For $r = 1$, we have 
    \[
        X_+ + X_- = 2X,\quad 
        X_+ - X_- = 2\sqrt{t}.
    \]
    For $r > 1$, put $x' = x_2 \otimes \cdots \otimes x_r$. Assuming $x_1 = X + \sqrt{t}$, we have 
    \begin{align*}
        x \pm \hsigma(x) 
        &= (X + \sqrt{t}) \otimes x' \pm (X - \sqrt{t}) \otimes \hsigma(x') \\
        &= X \otimes (x' \pm \hsigma(x')) + \sqrt{t} \otimes (x' \mp \hsigma(x'))
    \end{align*}
    and the proof follows by induction. 
\end{proof}

Identify rings $F[h]$ and $F[\sqrt{t}]$ by the correspondence
\[
    F[h] \to F[\sqrt{t}],\quad h \mapsto 2\sqrt{t}
\]
and consider the ring isomorphism
\[
    F[h, X] \to F[\sqrt{t}, X];\quad 
    X \mapsto X + \sqrt{t}.
\]
This induces an involutive Frobenius algebra isomorphism
\[
    A^F_h = F[h, X]/(X^2 - hX) \to A^F_{\sqrt{t}} = F[\sqrt{t}, X]/(X^2 - t)
\]
and a chain isomorphism
\[
    \CKh_h(D; F) \to \CKh_{\sqrt{t}}(D; F).
\]
Consider the two homology classes
\[
    \cz_t(D) := \frac{[\ca_{\sqrt{t}}(D)] + (-1)^{d + 1} [\cb_{\sqrt{t}}(D)]}{(2\sqrt{t})^{d + 1}},\quad 
    \cz'_t(D) := \frac{[\ca_{\sqrt{t}}(D)] + (-1)^d [\cb_{\sqrt{t}}(D)]}{(2{\sqrt{t}})^d}
\]
in $\Kh_{\sqrt{t}}(D; F) / \Tor$. 

\begin{lem}
    The two homology classes $\cz_t(D),\ \cz'_t(D)$ belong to $\Kh_t(D; F) / \Tor$, where we regard 
    \[
        \Kh_t(D; F) \subset \Kh_{\sqrt{t}}(D; F) = \Kh_t(D; F) \oplus \sqrt{t} \Kh_t(D; F)
    \]
    over $F[t]$. Moreover, one is contained in $\Kh^{[1]}_t(D; F)$ and the other in $\Kh^{[-1]}_t(D; F)$.
\end{lem}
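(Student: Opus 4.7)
The plan is to reduce everything to the identities
\[
\ca_{\sqrt t}(D) + \cb_{\sqrt t}(D) = \cx_t(D), \qquad \ca_{\sqrt t}(D) - \cb_{\sqrt t}(D) = 2\sqrt t\, \cy_t(D)
\]
established in the preceding lemma, and substitute them into the definitions of $\cz_t(D)$ and $\cz'_t(D)$. Splitting by the parity of $d$, the $\sqrt t$-factor appearing in either $\ca_{\sqrt t} \pm \cb_{\sqrt t}$ combines with the denominator $(2\sqrt t)^{d+1}$ or $(2\sqrt t)^d$ to yield, in each of the four cases, an expression of the form $[\cx_t(D)]/f(t)$ or $[\cy_t(D)]/f(t)$ for some nonzero $f(t)\in F[t]$ (the powers of $2$ present no obstacle since $\fchar F \ne 2$).

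A priori the resulting quotient lives only in $\Kh_{\sqrt t}(D;F)/\Tor$; the main conceptual step is to argue that it actually descends to $\Kh_t(D;F)/\Tor$. For this, I would use the decomposition $\Kh_{\sqrt t}(D;F)/\Tor = (\Kh_t(D;F)/\Tor) \oplus \sqrt t\,(\Kh_t(D;F)/\Tor)$ as $F[t]$-modules, together with the fact that $\Kh_t(D;F)/\Tor$ is free over the PID $F[t]$. If $f(t)(a + \sqrt t\, b) = x \in \Kh_t(D;F)/\Tor$ with $a,b \in \Kh_t(D;F)/\Tor$ and $f(t)\ne 0$, then comparing $\sqrt t$-components gives $f(t)\,b = 0$, hence $b=0$ by freeness, and the quotient equals $a \in \Kh_t(D;F)/\Tor$.

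For the grading assertion, the chain isomorphism $\CKh_h(D;F) \cong \CKh_{\sqrt t}(D;F)$ sending $X \mapsto X+\sqrt t$, $h \mapsto 2\sqrt t$ preserves quantum grading (both $h$ and $2\sqrt t$ have degree $2$). Under this identification, $\cz_t(D)$ and $\cz'_t(D)$ correspond to $\cz(K)$ and $\cz'(K) = u\,\cz(K)$, which by \Cref{prop:kh-free-generators} inhabit quantum gradings $-s^F(K)-1$ and $-s^F(K)+1$ respectively. Since $w(D)+r(D)$ is odd for any knot diagram (from the Seifert genus formula), the identity $s^F(K) = 2d_h(D) + w(D) - r(D) + 1$ forces $s^F(K)$ to be even, so both gradings are odd. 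Two odd integers differing by $2$ represent opposite classes modulo $4$, placing one of the two classes in $\Kh^{[1]}_t(D;F)$ and the other in $\Kh^{[-1]}_t(D;F)$. The parity bookkeeping in the substitution step is otherwise routine.
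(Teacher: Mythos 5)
Your proposal is correct and follows the same core computation as the paper: substitute the formulas for $\cx_t(D)$ and $\cy_t(D)$, split by the parity of $d$, and observe that in each case the result is $[\cx_t(D)]$ or $[\cy_t(D)]$ divided by a power of $4t \in F[t]$. You additionally spell out a point the paper's proof leaves implicit, namely why dividing an element of $\Kh_t(D;F)/\Tor$ by a nonzero polynomial $f(t)$ (inside the larger module $\Kh_{\sqrt t}(D;F)/\Tor$) again lands in the $\Kh_t$-summand; your argument via the $F[t]$-direct-sum decomposition and freeness over the PID $F[t]$ is exactly the right way to justify it. For the grading claim you take a somewhat different route: rather than invoking the preceding lemma's assertion that $\cx_t(D)$ and $\cy_t(D)$ lie in the two opposite $\bmod\ 4$ summands and noting that division by $(4t)^k$ (degree $4k$) preserves the grading class $\bmod\ 4$, you pass through the chain isomorphism with the $U(1)$-theory, identify $\cz_t(D)$, $\cz_t'(D)$ with $\cz(K)$, $\cz'(K)=u\,\cz(K)$, and then use the quantum gradings $-s^F(K)\mp 1$ together with a parity argument showing $s^F(K)$ is even. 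Both are valid; the paper's implicit route is a line shorter because it reuses the previous lemma's grading statement, while yours has the small virtue of making the link to the $U(1)$-equivariant generators and the Rasmussen invariant explicit, which matches the larger narrative of the section.
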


\begin{proof}
    If $d$ is even,
    \[
        \cz_t(D) = \frac{[\cy_t(D)]}{(4t)^{d/2}},\quad 
        \cz'_t(D) = \frac{[\cx_t(D)]}{(4t)^{d/2}}
    \]
    and if $d$ is odd, 
    \[
        \cz_t(D) = \frac{[\cx_t(D)]}{(4t)^{(d + 1)/2}},\quad 
        \cz'_t(D) = \frac{[\cy_t(D)]}{(4t)^{(d - 1)/2}}.
    \]
\end{proof}

It is immediate from \Cref{prop:cz-invariance} that both $\cz_t(D), \cz'_t(D)$ are invariant under the Reidemeister moves. Thus, for a knot $K$, $\Kh_t(K; F) / \Tor \cong F[t]^2$ is generated by the two classes $\cz_t(K), \cz'_t(K)$ over $F[t]$. The endomorphism $u$ on $\CKh_t(K; F)$ is given by 
\[
    u = \pm (\Xbar_+ + \Xbar_-) = \pm 2 \Xbar
\]
and with $u^2 = 4t$, $\Kh_t(K; F) / \Tor$ can be regarded as a free $F[u]$ module generated by $\cz_t(K)$. The following diagram depicts how the two generators of $F[t]^2$ and the single generator of $F[u]$ are related.
\[
\begin{tikzcd}
\ \ \vdots\ \ & \\ 
& 4t \cz'_t(K) \arrow[lu, "u" description] \\
4t \cz_t(K) \arrow[ru, "u" description] \arrow[uu, "4t", bend left] & \\ 
& \cz'_t(K) \arrow[lu, "u" description] \arrow[uu, "4t"', bend right] \\
\cz_t(K) \arrow[ru, "u" description] \arrow[uu, "4t", bend left] &
\end{tikzcd}
\]
In summary, we recover \cite[Corollary 3.41]{Sano:2020}, generalizing \cite[Proposition 8]{Khovanov:2004}.

\begin{prop}
    If $\fchar{F} \neq 2$, $\Kh_t(K; F) / \Tor \cong q^{-s - 1} F[u]$ is freely generated by $\cz_t(K)$.
\end{prop}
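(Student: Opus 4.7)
The plan is to reduce the $SU(2)$-equivariant claim to the $U(1)$-equivariant result of \Cref{prop:kh-free-generators} by means of the involutive Frobenius algebra isomorphism $A^F_h \cong A^F_{\sqrt{t}}$ given by $h \mapsto 2\sqrt{t}$, $X \mapsto X + \sqrt{t}$ introduced earlier in this section. This induces a chain isomorphism $\CKh_h(D; F) \cong \CKh_{\sqrt{t}}(D; F)$ sending $Y = X - h$ to $X - \sqrt{t} = X_-$, and therefore taking the Lee cycles $\ca(D) \mapsto \ca_{\sqrt{t}}(D)$, $\cb(D) \mapsto \cb_{\sqrt{t}}(D)$. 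Substituting into the defining formulas and using $h^{d+1} \mapsto (2\sqrt{t})^{d+1}$ shows that the homology classes are identified as $\cz(K) \leftrightarrow \cz_t(K)$ and $\cz'(K) \leftrightarrow \cz'_t(K)$; the quantum grading is preserved since $\deg(X) = \deg(\sqrt{t}) = 2$.

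In characteristic $\neq 2$, the discussion preceding the proposition shows that $\Kh_h(K; F)/\Tor$ is a free $F[u]$-module of rank one on $\cz(K)$: indeed, $F[u] = F[h] \oplus uF[h]$ with $u^2 = h^2$, and $\{\cz(K), u\cz(K) = \cz'(K)\}$ is an $F[h]$-basis for $\Kh_h(K; F)/\Tor$. Transporting through the above isomorphism yields at once that $\Kh_{\sqrt{t}}(K; F)/\Tor$ is a free $F[u]$-module of rank one on $\cz_t(K)$, now with the relation $u^2 = 4t$.

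Next I would verify the analogous statement directly inside the $t$-theory. Using $u = \pm 2\bar{X}$ on $\CKh_t(D; F)$ together with $X \cdot X_\pm = \pm \sqrt{t}\, X_\pm$, one computes $u\ca_{\sqrt{t}}(D) = \pm 2\sqrt{t}\,\ca_{\sqrt{t}}(D)$ and $u\cb_{\sqrt{t}}(D) = \mp 2\sqrt{t}\,\cb_{\sqrt{t}}(D)$ (signs depending on the Lee label of the basepoint circle). Substituting into the formula for $\cz_t(K)$ gives $u\cz_t(K) = \pm \cz'_t(K)$, so the $F[u]$-submodule $F[u]\cdot \cz_t(K) \subset \Kh_{\sqrt{t}}(K; F)/\Tor$ is in fact contained in $\Kh_t(K; F)/\Tor$.

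To conclude, I would combine this with the lemma immediately preceding the proposition, which guarantees that $\{\cz_t(K), \cz'_t(K)\}$ is a free $F[t]$-basis for $\Kh_t(K; F)/\Tor$. Since $F[u] = F[t] \oplus uF[t]$ as $F[t]$-module, we obtain
\[
  F[u]\cdot \cz_t(K) = F[t]\cdot \cz_t(K) \oplus F[t]\cdot \cz'_t(K) = \Kh_t(K; F)/\Tor,
\]
and the $F[u]$-action is free because a relation $a(t)\cz_t(K) + b(t)\cz'_t(K) = 0$ forces $a = b = 0$ by $F[t]$-linear independence. The quantum shift $q^{-s-1}$ transports from $\gr_q(\cz(K)) = -s-1$. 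The only mildly delicate point is bookkeeping the signs of $u\cz_t(K)$ arising from the dependence of $u$ on the Lee labeling at the basepoint; all remaining steps are formal consequences of the identifications already established.
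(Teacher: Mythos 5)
Your overall route — transport from the $h$-theory via the chain isomorphism $\CKh_h(D;F)\cong\CKh_{\sqrt{t}}(D;F)$, then descend to $\Kh_t(D;F)$ using the action of $u$ — is the same as the paper's, and your final argument (paragraphs three and four) does reach the right conclusion. But the second paragraph contains a genuine error that is worth flagging because it is exactly the distinction the proposition is about.

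You write ``$F[u]=F[h]\oplus uF[h]$ with $u^2=h^2$,'' and conclude that $\Kh_h(K;F)/\Tor$ is a \emph{free $F[u]$-module of rank one} on $\cz(K)$. Both claims are false. The ring $F[u]$ is the polynomial ring in $u$ alone, and $h\notin F[u]$: in degree $2$, $F[u]$ has the one-dimensional piece $Fu$, whereas $F[h][u]/(u^2-h^2)$ has the two-dimensional piece $Fh\oplus Fu$. The correct identity is $F[h][u]/(u^2-h^2)=F[h]\oplus uF[h]$. Consequently $\Kh_h(K;F)/\Tor$ is free of rank \emph{two} over $F[u]$ (the paper states it is freely generated by $\cz(D)$ and $h\cz(D)$ over $F[u]$), and of rank one only over the larger ring $F[h][u]/(u^2-h^2)$. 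The same confusion persists in the next sentence: transporting gives that $\Kh_{\sqrt{t}}(K;F)/\Tor$ is free of rank two over $F[u]$, not one. The reason the $t$-theory behaves differently is precisely that $u^2=4t$ does recover the ground ring parameter in characteristic $\neq 2$ (i.e.\ $t=u^2/4\in F[u]$, so $F[t][u]/(u^2-4t)\cong F[u]$), whereas $u^2=h^2$ does not recover $h$. Your argument therefore cannot simply ``transport'' the rank-one statement — there is no rank-one statement over $F[u]$ in the $h$- or $\sqrt{t}$-theory to transport.

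The rescue is your third and fourth paragraphs, which work correctly: $u\cz_t(K)=\pm\cz'_t(K)$ and $u^2=4t$, so $F[u]\cz_t(K)=F[t]\cz_t(K)+F[t]\cz'_t(K)=\Kh_t(K;F)/\Tor$, with freeness following from $F[t]$-linear independence. One small correction there: the ``lemma immediately preceding the proposition'' only establishes that $\cz_t(K),\cz'_t(K)$ lie in $\Kh_t(K;F)/\Tor$ and in opposite $\pm 1\pmod 4$ quantum-graded summands; that they form an $F[t]$-basis is a separate step, coming from the fact that the chain isomorphism carries the $F[h]$-basis $\{\cz(K),\cz'(K)\}$ to an $F[\sqrt{t}]$-basis of $\Kh_{\sqrt{t}}(K;F)/\Tor$, together with faithfully flat descent along $F[t]\hookrightarrow F[\sqrt{t}]$ (or equivalently the mod-$4$ quantum grading). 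With the second paragraph deleted and this step supplied, your proof is correct and matches the paper's.
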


\begin{remark}
    In \cite{QRSW:2023}, Qi, Robert, Sussan and Wagner construct an $\sl_2$-action on the equivariant $\mathfrak{gl}_N$ Khovanov--Rozansky homology, and, in particular, characterize the Rasmussen invariant as the highest weight of a certain quotient representation \cite[Section 6.3]{QRSW:2023}. It would be interesting to describe the $\sl_2$-action on equivariant Khovanov homology, and relate it with the descriptions obtained above. One can ask whether the $h$-divisibility $d_h(D)$ is related to the maximum $d$ such that $\mathsf{f}^d [\ca(D)] \neq 0$. 
\end{remark}

\begin{remark}
    The torsion part of equivariant Khovanov homology has also significant topological applications, as shown in \cite{Alishahi:2017,Alishahi:2018,Sarkar:2020,Onkar:2020,Caprau-etal:2021,Zhuang:2022,Hayden:2023,Lewark-Marino-Zibrowius:2024,Iltgen-Lewark-Marino:2025}. It is interesting to ask whether the generators of the torsion part can also be given explicit descriptions, and whether there is a canonical splitting of the homology group into the torsion part and the free part. 
\end{remark}

\medskip
\noindent 
\textbf{Acknowledgements.}
M.K. was partially supported by NSF grant DMS-2204033 and Simons
Collaboration Award 994328 ``New Structures in Low-Dimensional Topology''.
T.S. was supported by JSPS KAKENHI Grant Number 23K12982 and academist crowdfunding Project No.\ 121.

\printbibliography

\end{document}